% ====================
% ===== Preamble =====
% ====================
\documentclass[12pt]{article}
\usepackage[utf8]{inputenc}
\usepackage[T1]{fontenc}
\usepackage{amsmath, amsfonts, amssymb, amsthm, dsfont, mathtools, mathrsfs}
\usepackage[hypertexnames=false]{hyperref}
    \hypersetup{
      colorlinks   = true, %Colours links instead of ugly boxes
      urlcolor     = blue, %Colour for external hyperlinks
      linkcolor    = blue, %Colour of internal links
      citecolor    = red %Colour of citations
    }
\usepackage[nameinlink]{cleveref}
\usepackage{enumitem}
\usepackage{xcolor,colortbl} % Coloring cells in table
\usepackage{graphicx}
\usepackage{caption}
\usepackage{subcaption}
\usepackage{authblk}
\usepackage[thinlines]{easytable}
\usepackage{tikz}
\usetikzlibrary{arrows}
\usepackage{pdflscape} % Page rotation
\usepackage{nccmath} % Left-aligned math
\usepackage{thmtools}
\usepackage{thm-restate}
\usepackage{calc}
\usepackage{accents}
\usepackage{multirow}
\usepackage{todonotes}

% ===== Bibliography =====
\usepackage[sort]{natbib}
\bibliographystyle{abbrvnat}
\usepackage{usebib}
\setlength {\marginparwidth }{2cm}

% ===== Theorems =====
\newtheorem{theorem}{Theorem}[section]
\newtheorem{lemma}[theorem]{Lemma}
\newtheorem{proposition}[theorem]{Proposition}
\newtheorem{corollary}[theorem]{Corollary}
\newtheorem{assumption}[theorem]{Assumption}

\theoremstyle{definition}
    \newtheorem{definition}[theorem]{Definition}
    
    \newtheorem{remark}[theorem]{Remark}

% ==== Geometry =====    
\pagestyle{plain}
\usepackage[margin=1in]{geometry}
\allowdisplaybreaks

% ===== Extra =====
\providecommand{\keywords}[1]
{
	\small	
	\textbf{Keywords---} #1
}
% ===== Commands =====

\newcommand{\Romannum}[1]{\textup{\uppercase\expandafter{\romannumeral#1}}}
\newcommand{\romannum}[1]{\textup{\lowercase\expandafter{\romannumeral#1}}}

\newcommand{\tmin}{{\mathrm{min}}}
\newcommand{\set}[1]{\left\{ #1 \right\}}
\newcommand{\dist}{{\operatorname{dist}}}
\newcommand{\op}{{\operatorname{op}}}
\newcommand{\F}{{\operatorname{F}}}

\newcommand{\diag}{{\operatorname{diag}}}

\newcommand{\tr}{{\operatorname{tr}}}
\newcommand{\E}{\mathbb{E}}
\newcommand{\var}{\mathrm{Var}}

\renewcommand{\div}{\mathrm{div}}

\newcommand{\dP}{d\mathbb{P}}

\newcommand{\Exp}{\mathrm{Exp}}

\newcommand{\Ric}{\mathrm{Ric}}
\newcommand{\cvx}{\mathrm{cvx}}

\newcommand{\e}{\varepsilon}

\newcommand{\M}{\mathbb{M}}

\renewcommand{\P}{\mathbb{P}}
\newcommand{\Q}{\mathbb{Q}}
\newcommand{\R}{\mathbb{R}}

\newcommand{\rH}{\mathrm{H}}
\newcommand{\rHH}{\overline{\rH}}
\newcommand{\rS}{\mathrm{S}}
\newcommand{\rSS}{\overline{\rS}}
\newcommand{\T}{\mathbf{T}}
\newcommand{\TT}{\overline{\T}}

\newcommand{\rn}[1]{\textup{\lowercase\expandafter{\romannumeral#1}}}
\newcommand{\dvol}{d \mathrm{vol}_{\M}}
\newcommand{\sff}{\mathrm{I\!I}}
\newcommand{\diam}{\operatorname{diam}}

\renewcommand{\tilde}{\widetilde}

\newcommand{\cA}{\mathcal{A}}
\newcommand{\cE}{\mathcal{E}}
\newcommand{\cN}{\mathcal{N}}
\newcommand{\cX}{\mathcal{X}}

\newcommand{\msF}{\mathscr{F}}
\newcommand{\msG}{\mathscr{G}}

\newcommand{\bfB}{\mathbf{B}}

\newcommand{\bfS}{\mathbf{S}}

\newcommand{\bfZ}{\mathbf{Z}}

\newcommand{\tbfZ}{\tilde{\mathbf{Z}}}

\newlist{myenumerate}{enumerate}{1}
\setlist[myenumerate,1]{
	label={},
	leftmargin=0pt
}
\newcommand{\myitem}[1]{\item \underline{{\em#1:}}\hspace{0.5em}}

\renewcommand{\cite}[1]{\citet{#1}}

\newcommand{\distcvx}{\mathsf{d}} % \dist_\cvx
\renewcommand{\citep}[1]{(\cite{#1})}
\bibinput{smoothing_bib}

% ====================
% ===== Document =====
% ====================
\title{Kernel smoothing on manifolds}
\author{Eunseong Bae\thanks{Department of Statistics, University of California, Davis. Email: \texttt{esbae@ucdavis.edu}}\,}
\author{Wolfgang Polonik\thanks{Department of Statistics, University of California, Davis. Email: \texttt{wpolonik@ucdavis.edu}}}
\affil{}
\date{}

\begin{document}
\maketitle

% ====================
% ===== Abstract =====
% ====================
\begin{abstract}
	Under the assumption that data lie on a compact (unknown) manifold without boundary, we derive finite sample bounds for kernel smoothing and its (first and second) derivatives, and we establish asymptotic normality through Berry-Esseen type bounds.
	Special cases include kernel density estimation, kernel regression and the heat kernel signature. 
	Connections to the graph Laplacian are also discussed.
\end{abstract}

\keywords{Kernel smoothing, Manifold learning, Uniform consistency, Berry-Esseen bounds, Asymptotic normality, Kernel density estimation, Kernel regression, Heat kernel signatures, graph Laplacians}

% ========================
% ===== Introduction =====
% ========================
\section{Introduction}

Manifolds, which are geometric objects locally isomorphic to Euclidean spaces, play an important role in high-dimensional data analysis.
The so-called manifold hypothesis frequently employed in statistical machine learning postulates that high-dimensional data lie on (or close to) a low-dimensional manifold.
This assumption amounts to a complexity reduction of the underlying statistical model. 
Moreover, the local isomorphy to Euclidean spaces facilitates the application of diverse methodologies used in Euclidean settings to analyze the geometric structure.

Our work provides novel insights into kernel smoothing by establishing a {\em unified point of view} to various smoothing operations, providing {\em finite sample approximations} for smoothing estimators of not only the functions themselves, but in particular of the {\em derivatives of the functions} using the {\em uniform} norm for data sampled on Riemannian manifolds. This includes kernel density estimation, kernel regression, heat kernel smoothing, or kernel weighted graph Laplacians as special cases.

We assume that a function on a manifold is observed either with or without noise at data sampled from a manifold. Kernel smoothing then attempts to recover the function and also its derivatives over the entire manifold.  
We establish finite sample sup-norm concentration bounds and Berry-Esseen type bounds. 
These results then imply rates of uniform consistency and asymptotic normality of general kernel smoothing on a manifold. The fact that data are sampled from a manifold makes the derivations technically challenging.

Special cases have been considered in the literature.
A classical subject is the kernel density estimator (KDE) on spheres (or for directional data), including \cite{Hall1987Spherical, Bai1988Directional}.
The KDE on the $d$-dimensional torus was studied in \cite{DiMarzio2011torus}. 
Other works on KDEs on manifolds include \cite{jiang2017uniform} and \cite{henry2009KernelDensity}, who derive uniform consistency of the kernel density estimator. 
The latter showed strong uniform consistency and asymptotic normality. 
\cite{wu2022strong} derives uniform convergence rates of KDEs with general kernel functions.
\cite{bouzebda2023RatesStrong} derives the uniform convergence rate for KDEs and kernel regression with general kernel functions. Their rates are slightly slower than ours.
Pointwise consistency of kernel regression can also be found in \cite{khardani2022NonparametricRecursive} and \cite{cheng2013local}.
\cite{gine2006empirical} show uniform convergence and asymptotic normality of the kernel graph Laplacian under uniform sampling. \cite{berenfeld_density_2021} derive $L^2$-minimax bounds for kernel density estimation on unknown manifolds.

Prior work on smoothing on manifolds has mainly focused on the function itself, and only a few results treat its derivatives.
To our knowledge, \cite{zhang2021KernelSmoothing} is the only work in this direction.
Specifically, they show uniform convergence rates for the KDE and its derivatives on a general unit sphere.
They consider an entire class of kernel functions, including the Gaussian kernel, and their rates are the same as our results if specified to their setup. Heat kernel smoothing has not been considered before.

The structure of this paper is as follows.
Section \ref{smoothing:prelim} briefly introduces background knowledge on manifolds and notation that will be used throughout this paper.
Section \ref{unif} investigates uniform consistency of kernel smoothing.
Asymptotic normality of kernel smoothing is covered in Section \ref{Berry}.
Section \ref{deriv} discusses uniform consistency of derivatives of kernel smoothing.
In Section \ref{smoothing:application}, results from prior sections are applied to kernel density estimation and heat kernel signature smoothing.
Simulation studies and proofs are postponed to the appendix.

% =========================
% ===== Preliminaries =====
% =========================
\section{Preliminaries} \label{smoothing:prelim}

A topological space $\M$ is called a $d$-dimensional manifold if each point $x \in \M$ admits a neighborhood homeomorphic to an open subset of $\R^d$ via a homeomorphism $\phi_x$.
If $\phi_x$ is $C^\infty$ for all $x \in \M$, then $\M$ is called a smooth $d$-dimensional manifold.
The image of the derivative of $\phi_x$ at $x$, denoted by $T_x \M$, is called the tangent space of $\M$ at $x$, and its elements are called tangent vectors.
A Riemannian metric on $\M$ assigns to each $x \in \M$ an inner product on $T_x \M$, and a smooth manifold equipped with a Riemannian metric is called a Riemannian manifold. 
The volume measure $\mathrm{vol}_\M$ is the measure on $\M$ induced by the Riemannian metric through the associated Riemannian volume form.

For a real-valued function $f$ defined on $\M$, we denote the gradient of $f$ at $x$ by $\nabla_\M f (x)$.
It is the unique tangent vector in $T_x \M$ associated with the differential map of $f$ through the Riemannian metric.
We also denote the Hessian of $f$ at $x$ by $\nabla_\M^2 f (x)$.
It is the bilinear form on $T_x \M \times T_x \M$ uniquely determined by $f$ and the Riemannian metric.
The Laplace-Beltrami operator applied to $f$, denoted by $\Delta_\M f$, is defined as $\Delta_\M f = -\div_\M \nabla_\M f$ where $\div_\M$ denotes the divergence operator on $\M$.

For a vector $v \in \R^k$, $\|v\|_{\R^k}$ denotes the Euclidean norm of $v$.
For a matrix $A$, $\|A\|_\F$ denotes the Frobenius norm of $A$. 
To define the norms of gradients or Hessians on $\M$, note that for any $x \in \M$, $T_x \M$ can be identified with $\R^d$,
so that we can regard $\Exp_x$ as a function from $\R^d$ to $\M \subset \R^D$ where $\Exp_x$ denotes the exponential map at $x$ with $\Exp_x (0) = x$.
Under this identification, for a real-valued function $f$ defined on $\M$, we can define the norm of $\nabla_\M f (x)$ as
the Euclidean norm of the gradient $\nabla_{\R^d} (f \circ \Exp_x) (0)$, that is,
$\|\nabla_\M f (x) \|_{T_x \M} = \| \nabla_{\R^d} (f \circ \Exp_x) (0) \|_{\R^d}$.
Similarly, we define the norm of $\nabla_\M^2 f (x)$ as the Frobineus norm of the Hessian matrix $\nabla_{\R^d}^2 (f \circ \Exp_x) (0)$, that is, $\|\nabla_\M^2 f (x) \|_{T_x \M \times T_x \M} = \| \nabla_{\R^d}^2 (f \circ \Exp_x) (0) \|_\F$.
For simplicity, we suppress the subscripts $T_x$ and $T_x\M \times T_x \M$ in the norms when no confusion arises.

For non-negative $k$, we write $f \in C^k(\M)$ if $f$ is a real-valued $C^k$-function defined on $\M$.
By convention, we write $f \in C^0(\M)$ to mean that $f$ is bounded and measurable. 
Additionally, for $\alpha \in (0,1]$, we write $f \in C^{k,\alpha}(\M)$ if $f \in C^k(\M)$ and its $k$th partial derivatives are uniformly $\alpha$-H\"{o}lder continuous.

% ===============================
% ===== Uniform consistency =====
% ===============================
\section{Uniform consistency of kernel smoothing on manifolds} \label{unif}

Let $X_1,\ldots,X_n$ be observations drawn from a Riemannian manifold $\M \subset \R^D$.
We begin with studying statistics of the form
$$\frac{1}{n} \sum_{i=1}^{n} K_\e \left( \|X_i - x \|_{\R^D} \right) f(X_i)$$
where $K_\e$ is the kernel function with the bandwidth $\e>0$ and $f$ is a given, {\em known} function, and we also consider normalized versions (see below). 
This includes a kernel density estimator (for $f \equiv 1$), and also a Nadaraya-Watson type regression estimator (after normalization - see below), respectively, as special cases.
Note that these weights use the distance in the ambient space, rather than the geodesic distance on $\M$.
This discrepancy is inevitable in a practical sense since the geodesic distance cannot be used if $\M$ is unknown,
and even for a known case, calculating all pairs of geodesic distances usually is computationally expensive.
However, in a small neighborhood, these two distances are very close to each other within acceptable bounds (cf. Proposition \ref{smoothing:aux:expansion} in the appendix).
That is, under the right assumptions, for sufficiently small bandwidth, kernel smoothing on manifolds using the ambient space metric can still work properly.

We first state the basic assumptions that are commonly used in manifold settings.

\begin{assumption} \label{smoothing:assume:manifold}
	$\M$ is a $d$-dimensional compact Riemannian submanifold without boundary isometrically embedded in $\R^D$.
\end{assumption}

\begin{assumption} \label{smoothing:assume:density}
	$\P$ is a probability measure on $\M$ with a smooth density $\rho$ with respect to the volume measure on (that is, $d\P = \rho \hspace{.25em} d\mathrm{vol}_{\M}$). The minimum of $\rho$, denoted by $\rho_\tmin$, is strictly positive. $X_1, \dots, X_n$ are i.i.d. samples from $\P$.
\end{assumption}

\begin{assumption} \label{smoothing:assume:kernel}
	$K_\e: \R \rightarrow \R$ is the Gaussian kernel of the form
	$K_\e \left( u\right) = (2 \pi\e^2)^{-d/2} \exp \big(-\frac{1}{2\e^2} u^2\big). $
\end{assumption}

Note that $K_\e$ is positive, symmetric, differentiable, and integrable over $\R$ with $$\int_{\R^d} K_\e(\|u\|_{\R^d})du = 1.$$ 
Also, all the moments of $K_\e$ exist.

{\em In our theoretical results, unless stated otherwise, we will always assume that Assumptions~\ref{smoothing:assume:manifold} - \ref{smoothing:assume:kernel} hold.}

In this work, we restrict attention to this Gaussian kernel because of its quickly decaying tails, which allows for strong control of the effect of replacing the intrinsic (geodesic) distance by the Euclidean distance.
With a different choice of kernel, similar results as stated in this paper can be obtained. 
However, convergence rates would be slower if the kernel lacks sufficient smoothness or has heavy tails.

Next, we introduce our quantities of interest:
For $f: \M \to \R$, $x \in \M$ and $\e>0$, the unnormalized kernel smoothing operator and its expected value are

\begin{align}
	\T_{n,\e} [f] (x)
	&= \frac{1}{n} \sum_{i=1}^{n} K_\e \big( \|X_i - x \|_{\R^D}\big) f(X_i), \\
	\T_{\e} [f] (x)
	&= \E \, \T_{n,\e} [f] (x) 
	= \int_{\M} K_\e \big( \|u - x \|_{\R^D} \big) f(u) d\P (u).
\end{align}
We also consider their normalized versions:
\begin{align*}
	\TT_{n,\e} [f] (x)
	&=  \frac{\T_{n,\e} [f] (x)}{\T_{n,\e} [1] (x)}
	=\frac{ {\sum_{i=1}^{n}} K_\e \big( \|X_i - x \|_{\R^D} \big) f(X_i) }{{\sum_{i=1}^{n}} K_\e \big( \|X_i - x \|_{\R^D} \big)}, \\
	\TT_{\e} [f] (x)
	&= \frac{\T_{\e} [f] (x)}{\T_{\e} [1] (x)}
	= \frac{ \int_{\M} K_\e \big( \|u - x \|_{\R^D} \big) f(u) d\P (u) }{ \int_{\M} K_\e \big( \|u - x \|_{\R^D}\big) d\P (u) }.
\end{align*}

Many of our results consist of finite sample bounds with constants depending on the underlying manifold $\M$ through various geometric quantities.
For convenience, we define the following notion of $\M$-dependent constants:

\begin{definition}[$\M$-dependence] 
	A positive constant is called $\M$-dependent if it depends on $\M$ through its dimension $d$, volume, diameter, injectivity radius, as well as the uniform bounds on the Riemann curvature tensor and on the second fundamental form, together with their covariant derivatives.
\end{definition}

To establish the uniform convergence of $\T_{n,\e}[f]$ (resp. $\TT_{n,\e}[f]$),
we decompose the analysis into the 
the convergence of $\T_{n,\e}[f]$ to $\T_{\e}[f]$ (resp. $\TT_{n,\e}[f]$ to $\TT_{\e}[f]$), the stochastic part,
and the convergence of $\T_{n,\e}[f]$ to $\rho f$ (resp. $\TT_{\e}[f]$ to $f$), the bias part.

% ===== Bias =====
\subsection{Controlling the bias}

Our first result is on the uniform convergence of the bias. The higher-order terms introduced below will be used to discuss bias-correction and its connection to graph Laplacians.
To this end, consider the $s$-weighted ($s > 0$) Laplace-Beltrami operator:
$$\Delta_{\M,s} f =- \frac{1}{\rho^s} \div_\M \big( \rho^s \nabla_\M f \big).$$
Note that the dependence on $\rho$ is not indicated in the notation. 
The standard Laplace-Beltrami operator $\Delta _\M$ corresponds to the uniform distribution on $\M$.

Expansions similar to the one presented in the following theorem can be found in the literature on empirical approximations of the Laplace-Beltrami operator (see, e.g., Lemma 8 of \cite{coifman2006diffusion}).

\begin{theorem} \label{unif:bias}
	Suppose $f \in C^{2,\alpha}(\M)$ with $\alpha \in (0,1]$.
	Then, there exist $\e_0,C_0>0$ such that if $\e < \e_0$, we have 
	\begin{equation} 
		\sup_{x \in \M} \bigg| \T_{\e} [f] (x) - \rho(x)f(x)\; +
		\frac{\e^2}{2}\Big[ c(x)\rho(x)f(x) + \Delta_\M (\rho f)(x)\Big] \bigg|
		\leq C_0 \e^{2+\alpha},
		\label{unif:bias-res1}
	\end{equation}
	\begin{equation}	
		\sup_{x \in \M} \left| \TT_{\e} [f] (x) - f(x) + \frac{\e^2}{2} \Delta_{\M,2} f (x)\,\right| 
		\le C_0 \e^{2+\alpha},
		\label{unif:bias-res2}
	\end{equation}
	where $c(x)$ depends only on the curvature of $\M$ at $x$.
	The constants $\e_0$ and $C_0$ are $\M$-dependent, and they also depend on the $C^{2,\alpha}$-norms of $f$ and $\rho$.
	For \eqref{unif:bias-res2}, $\e_0$ and $C_0$ additionally depend on $\rho_\tmin$.
	
	If we only assume $f \in C^2(\M)$, then the right-hand sides in (\ref{unif:bias-res1}) and (\ref{unif:bias-res2}) are $o(\e^2)$ as $\e \to 0$.
\end{theorem}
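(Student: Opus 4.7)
The plan is to work in normal coordinates around each $x \in \M$ via the exponential map $\Exp_x : T_x\M \to \M$, identifying $T_x\M$ with $\R^d$, and to reduce the integral defining $\T_\e[f](x)$ to a Gaussian integral on $\R^d$ whose moments can be computed explicitly.

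First I would exploit the Gaussian tail decay to localize. Since $K_\e(r) \propto \exp(-r^2/(2\e^2))$ and $\M$ is compact with positive injectivity radius, for any fixed $\delta \in (0,1)$ the contribution from $\{u : \|u-x\|_{\R^D} > \e^{1-\delta}\}$ is $O(\exp(-c\e^{-2\delta}))$ uniformly in $x$, which is negligible at every polynomial order. On the remaining neighborhood, Proposition \ref{smoothing:aux:expansion} allows me to pass to tangent-space variables $v \in T_x\M$ via $u = \Exp_x(v)$ with $\|v\|$ bounded by a small multiple of $\e^{1-\delta}$.

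Inside that ball I would assemble the standard normal-coordinate expansions. Proposition \ref{smoothing:aux:expansion} gives $\|\Exp_x(v) - x\|_{\R^D}^2 = \|v\|^2 - \tfrac{1}{12}\|\sff_x(v,v)\|^2 + O(\|v\|^5)$, where $\sff_x$ is the second fundamental form at $x$. The Riemannian volume form pulls back as $d\mathrm{vol}_\M(\Exp_x(v)) = (1 - \tfrac{1}{6}\Ric_x(v,v) + O(\|v\|^3))\,dv$. The $C^{2,\alpha}$ hypothesis yields $(\rho f)(\Exp_x(v)) = (\rho f)(x) + \langle \nabla_\M(\rho f)(x), v\rangle + \tfrac{1}{2}\nabla_\M^2(\rho f)(x)(v,v) + R(v)$ with $|R(v)| \le C_f \|v\|^{2+\alpha}$. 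Substituting the distance expansion into the Gaussian kernel and Taylor-expanding the correction in the exponent gives
\begin{equation*}
K_\e\bigl(\|\Exp_x(v)-x\|_{\R^D}\bigr) = (2\pi\e^2)^{-d/2}\exp\!\bigl(-\tfrac{\|v\|^2}{2\e^2}\bigr)\Bigl(1 + \tfrac{\|\sff_x(v,v)\|^2}{24\e^2} + O\bigl(\tfrac{\|v\|^5}{\e^2} + \tfrac{\|v\|^8}{\e^4}\bigr)\Bigr).
\end{equation*}
Multiplying these three expansions and integrating against the unnormalized Gaussian on $\R^d$, I would then apply the standard moment identities (odd moments vanish; $\int v_i v_j = \e^2\delta_{ij}$; fourth moments are $O(\e^4)$). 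The $v$-constant term reproduces $\rho(x)f(x)$; the Hessian-of-$(\rho f)$ piece yields $-\tfrac{\e^2}{2}\Delta_\M(\rho f)(x)$ after recognizing the trace (with our sign convention $\Delta_\M = -\div_\M\nabla_\M$); and the Ricci volume correction together with the quartic $\|\sff_x(v,v)\|^2/\e^2$ correction package into $-\tfrac{\e^2}{2}c(x)\rho(x)f(x)$, where $c(x)$ depends only on $\sff_x$ and the Ricci tensor at $x$. The H\"older remainder integrates to $O(\e^{2+\alpha})$, and to $o(\e^2)$ under only $C^2$ by dominated convergence. This proves \eqref{unif:bias-res1}.

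For the normalized statement \eqref{unif:bias-res2} I would apply \eqref{unif:bias-res1} both to $f$ and to the constant function $1$ and divide. Since $\rho \ge \rho_\tmin > 0$, $\T_\e[1](x)$ is bounded away from $0$ for small $\e$, and a first-order Taylor expansion of the ratio makes the $c(x)\rho(x)f(x)$ terms cancel. The product rule $\Delta_\M(\rho f) = \rho\Delta_\M f + f\Delta_\M\rho - 2\nabla_\M\rho\cdot\nabla_\M f$ then identifies the surviving combination $\Delta_\M(\rho f)/\rho - f\Delta_\M \rho/\rho$ with $\Delta_\M f - \tfrac{2}{\rho}\nabla_\M\rho\cdot\nabla_\M f = \Delta_{\M,2}f$. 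The main obstacle is the careful bookkeeping of the three sources of $\e^2$-corrections (the extrinsic/intrinsic distance mismatch producing $c(x)$, the Riemannian volume distortion, and the function's Hessian contribution) while keeping the H\"older-based error uniform in $x$; compactness of $\M$ uniformly bounds all the geometric invariants, so the implicit constants are $\M$-dependent as claimed.
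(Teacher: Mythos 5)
Your proposal is correct and follows essentially the same route as the paper's proof: localization via Gaussian tail decay, change of variables to normal coordinates using Proposition~\ref{smoothing:aux:expansion} to expand the extrinsic distance, the volume form $G_x$, and the pullback $(\widetilde{\rho f})_x$, then integration against Gaussian moments to extract the $\rho f$, $\Delta_\M(\rho f)$, and curvature/second-fundamental-form contributions, with the H\"older remainder giving the $O(\e^{2+\alpha})$ (respectively $o(\e^2)$) error; the normalized case is obtained exactly as in the paper by expanding $1/\T_\e[1]$, observing the cancellation of the $c(x)\rho f$ terms, and invoking the product rule for $\Delta_\M$ to recognize $\Delta_{\M,2}f$.
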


Note that the first order term of $\T_{\e} [f] (x)$ involves not only $f$ but also the density function $\rho$.
The normalization then removes the density and also the curvature-dependent term $c(x)$ in the second order term.
We also want to remark that the convergence rate $\e^2$ may in general not be improved even with a higher-order kernel.
This is because terms involving the curvature of $\M$ on the left-hand side of (\ref{unif:bias-res1}) can be removed only if $\M$ has zero curvature everywhere, that is, $\M$ itself should be a flat space, such as a one-dimensional manifold or a torus.

We also note that if the value $f(x)$ equals zero, then the above theorem tells us that $\frac{1}{\e^2}\TT_{\sqrt{2}\e} [f] (x)$ converges to  $-\Delta_{2,\M} f(x)$ as $\e \to 0,$ while $\T_{\sqrt{2}\e} [f](x) \to -\Delta_\M(\rho f)$. Here, the bandwidth is replaced with $\sqrt{2}\e$ to remove the factor $1/2$. We will come back to this when discussing the smoothing estimation of the Laplace-Beltrami operator.

% ===== Stochastic =====
\subsection{Controlling the stochastic part}

Next, we present the uniform convergence of the stochastic part, where we observe two different rates for the normalized and the unnormalized case.
For brevity, we use the notation $\e_{n,\delta}^{*} (C_0^*) = C_0^* \left(\frac{\log n \vee \log(1/\delta)}{n}\right)^{1/d}$ for $C_0^*>0$ and $0<\delta<1$ throughout the paper.

\begin{theorem} \label{unif:sto}
	Let	$f \in C^0(\M)$.
	There exist $\e_0, C_0>0$ such that if $\e < \e_0$,
	we have with probability at least $1-\delta$,
	\begin{equation} \label{unif:sto-res1}
		\sup_{x \in \M} | \T_{n,\e} [f] (x) - \T_{\e} [f] (x) |
		\leq C_0 \bigg( \sqrt{ \frac{\log(1/(\e \wedge \delta))}{n\e^{d}} } + \frac{\log (1/(\e \wedge \delta)) }{n \e^{d}} \bigg).
	\end{equation}
	The constants $\e_0$ and $C_0$ are $\M$-dependent, and they also depend on the uniform bounds of $f$ and $\rho$. 
	
	Moreover, if $f \in C^2(\M)$, then there exists $C_0^*>0$ such that, if $\e_{n,\delta}^{*} (C_0^*) < \e < \e_0$,
	then we have with probability at least $1-\delta$, 
	\begin{equation} \label{unif:sto-res2}
		\sup_{x \in \M} \left| \TT_{n,\e} [f] (x) - \TT_{\e} [f] (x) \right|
		\leq C_0 \left( \sqrt{ \frac{\log(1/(\e \wedge \delta))}{n\e^{d-2}} } 
		+ \frac{\log (1/(\e \wedge \delta)) }{n \e^d} \right).
	\end{equation} 
	The constant $C_0^*$ is $\M$-dependent, and it also depends on $\rho_\tmin$ and the $C^2$-norms of $f$ and $\rho$.
	The constants $\e_0$ and $C_0$ also follow these additional dependencies.
\end{theorem}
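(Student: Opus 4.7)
The plan is a covering plus Bernstein argument for both bounds, where the improvement in \eqref{unif:sto-res2} comes from a cancellation in the numerator after a ratio decomposition. For \eqref{unif:sto-res1}, fix $x \in \M$ and set $Z_i(x) = K_\e(\|X_i - x\|_{\R^D}) f(X_i)$. The Gaussian form gives $\|K_\e\|_\infty \lesssim \e^{-d}$, hence $|Z_i(x)| \lesssim \|f\|_\infty \e^{-d}$. For the variance, the tails of $K_\e$ concentrate the effective support into an intrinsic ball of radius $O(\e)$ around $x$, so a manifold Laplace-type computation, using volume comparison and the Euclidean-versus-geodesic distance distortion of Proposition \ref{smoothing:aux:expansion}, yields $\int_\M K_\e^2(\|u-x\|_{\R^D})\,d\P(u) = O(\e^{-d})$, so $\var(Z_1(x)) = O(\e^{-d})$. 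Bernstein then delivers
\begin{equation*}
    \P\big(|\T_{n,\e}[f](x) - \T_\e[f](x)| > t\big) \leq 2\exp\left(-\frac{c\,n t^2}{\e^{-d} + \e^{-d} t}\right).
\end{equation*}

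To pass from pointwise to uniform, cover $\M$ with a net of mesh $\eta$; compactness and volume comparison give an $\M$-dependent bound $N(\eta) \lesssim \eta^{-d}$. Since $x \mapsto K_\e(\|u-x\|_{\R^D})f(u)$ is Lipschitz in $x$ with constant $\lesssim \|f\|_\infty \e^{-(d+1)}$ (from $\|\nabla K_\e\|_\infty \lesssim \e^{-(d+1)}$), taking $\eta$ polynomial in $\e/n$ makes the discretization error negligible compared to the target rate. A union bound and the usual reparametrization by the confidence level $\delta$ produce the $\log(1/(\e\wedge\delta))$ factor, yielding \eqref{unif:sto-res1}.

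For \eqref{unif:sto-res2} use the identity
\begin{equation*}
    \TT_{n,\e}[f](x) - \TT_\e[f](x) = \frac{1}{\T_{n,\e}[1](x)}\Big[(\T_{n,\e}[f] - \T_\e[f])(x) - \TT_\e[f](x)(\T_{n,\e}[1] - \T_\e[1])(x)\Big].
\end{equation*}
By \eqref{unif:sto-res1} with $f\equiv 1$ combined with Theorem \ref{unif:bias} (also with $f\equiv 1$), the denominator concentrates uniformly around $\rho(x) \geq \rho_\tmin > 0$, provided the stochastic error is $o(1)$; this is precisely what the hypothesis $\e > \e_{n,\delta}^{*}(C_0^*)$ enforces. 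The key observation for the improved rate is that the summands in the numerator rearrange as $K_\e(\|X_i-x\|_{\R^D})(f(X_i) - \TT_\e[f](x))$, and on the effective support $\|X_i-x\|_{\R^D} \lesssim \e$, a $C^2$ Taylor expansion of $f$ together with $\TT_\e[f](x) = f(x) + O(\e^2)$ (Theorem \ref{unif:bias}) gives $|f(X_i) - \TT_\e[f](x)| = O(\e)$. Consequently the per-summand variance drops from $O(\e^{-d})$ to $O(\e^{-(d-2)})$, while the sup-norm bound $O(\e^{-d})$ is unchanged and continues to drive the linear Bernstein term. Repeating the cover-plus-Bernstein argument with these improved moment bounds delivers \eqref{unif:sto-res2}.

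The main technical obstacle I anticipate is on the geometric rather than the probabilistic side: establishing the Laplace-type integral and net estimates with constants that are genuinely $\M$-dependent (rather than silently depending on $\e$) requires careful volume comparison, control of the distortion between $\|\cdot\|_{\R^D}$ and geodesic distance away from the cut locus, and ensuring the Gaussian-tail absorption of the far-field contribution is uniform in $x$. Once these inputs are in hand, the Bernstein-plus-discretization scheme and the ratio decomposition are essentially routine.
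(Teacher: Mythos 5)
Your covering-plus-Bernstein scheme and the paper's VC empirical-process bound share the same architecture (per-point variance, sup-norm bound, metric entropy of $\M$), and your ratio identity for the normalized case is a valid variant of the paper's: you re-center the summands around $\TT_\e[f](x)$, whereas the paper re-centers around $f(x)$ via the decomposition into $\bfS_{n,\e}[f] - \bfS_\e[f]$; since $\TT_\e[f](x) - f(x) = O(\e^2)$ these are interchangeable, and in either case the localization of $K_\e$ makes the re-centered summand $O(\e)$ smaller, giving the improved variance of order $\e^{-(d-2)}$. That part of your reasoning is sound.

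The gap is in the claimed logarithmic factor. A naive cover of $\M$ at mesh $\eta$ gives $N(\eta) \lesssim \eta^{-d}$, and since the Lipschitz constant of $x \mapsto \T_{n,\e}[f](x) - \T_\e[f](x)$ is of order $\e^{-(d+1)}$, keeping the discretization error below the target rate $\sqrt{\log/(n\e^d)}$ forces $\eta \lesssim \e^{d/2+1}/\sqrt{n}$, hence $\log N(\eta) \gtrsim \log(1/\e) + \log n$. The union-bound Bernstein estimate then yields a bound of order $\sqrt{(\log(1/\e) + \log n + \log(1/\delta))/(n\e^d)}$, which is strictly weaker than the stated $\sqrt{\log(1/(\e\wedge\delta))/(n\e^d)}$ whenever $\e$ does not decay polynomially in $n$ (for instance, $\e$ held fixed as $n\to\infty$). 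The paper avoids the extraneous $\log n$ by invoking the VC-type concentration inequality Proposition~\ref{smoothing:aux:VC}: there the covering number of the function class $\msF_\e$ is measured in $L^2(\Q)$ and the chaining is done at scale $\sigma \sim \e^{d/2}$, so the log factor is $\log(AB/\sigma) \sim \log(1/\e)$ rather than $\log N(\eta)$. You would need to replace the union bound by a chaining (Dudley-type) argument, or directly use a Talagrand/Bousquet inequality as the paper does, to match the stated rate; as written, the naive cover overshoots by a logarithm in $n$.
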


Note that the upper bound depends on the dimension $d$ of $\M$, rather than the dimension of the ambient space $\R^D$. 

Compared to the unnormalized version, a lower bound for $\e$ is assumed for the normalized case.  This lower bound can be interpreted as requiring $n$ to be large enough.
This is needed to ensure that the denominator $\T_{n,\e} [1] (x)$ is bounded away from 0 w.h.p., and hence $\TT_{n,\e} [f] (x)$ is properly bounded.

As already indicated, the convergence rate is typically faster for the normalized case. Indeed, if $\delta \approx \e$ and $\frac{\log(1/\e)}{n \e^d}$ is sufficiently small, then the dominant term in the upper bound is $\sqrt{\frac{\log(1/\e)}{n \e^{d-2}}}$, which is faster than $\sqrt{\frac{\log(1/\e)}{n \e^d}}$ from the unnormalized case. The intuition behind this is that $\TT_{n,\e} [f] (x) - \TT_{\e} [f] (x)$ is shift-invariant, meaning that  $\TT_{n,\e} [f] (x) - \TT_{\e} [f] (x) = \TT_{n,\e} [f + c] (x) - \TT_{\e} [f+ c] (x)$ for any $c \in \R$ (this does not hold in the unnormalized case). Choosing $c = f(x)$ (here $x$ is fixed(!)), we see that we actually are smoothing $\tilde f(\cdot) = f(\cdot) - f(x)$, and since the kernel localizes about $x$, so that $f(\cdot) - f(x) = O(\e)$, we gain a factor of $\e$.
(For details, see supplemental material, discussion given before Lemma~\ref{unif:sto:denom} ff.).

% ===== consistency =====
\subsection{Uniform consistency}

By simply combining the above two theorems, one obtains the rates for uniform consistency of the two smoothing operators.
For reference, we state this result as a corollary:

\begin{corollary} \label{unif:cons}
	Let $f \in C^2(\M)$.
	There exist $\e_0, C_0 > 0$ such that if $\e < \e_0$, we have with probability at least $1-\delta$, 
	\begin{equation} \label{unif:cons-res}
		\sup_{x \in \M} \left| \T_{n, \e} [f] (x) - \rho(x) f(x) \right|
		\leq C_0 \bigg( \e^{2} + \sqrt{ \frac{\log(1/(\e \wedge \delta))}{n\e^{d}} }  
		+ \frac{\log(1/(\e \wedge \delta))}{n \e^{d}} \bigg).
	\end{equation}
	Moreover, there exist $C_0^* > 0$ such that if $\e_{n,\delta}^{*} (C_0^*) < \e < \e_0$,
	then (\ref{unif:cons-res}) with $\sqrt{ \frac{\log(1/(\e \wedge \delta))}{n\e^{d}} }$ replaced by $\sqrt{ \frac{\log(1/(\e \wedge \delta))}{n\e^{d-2}} }$ also holds for $\sup_{x \in \M}|\TT_{n,\e} [f] (x) - f(x)|$.
	The dependence of the constants $\e_0,C_0,C_0^*$ on $\M, f, \rho$ follows directly from Theorems~\ref{unif:bias} and \ref{unif:sto}.
\end{corollary}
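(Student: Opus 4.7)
This is a direct application of the triangle inequality, with all the heavy lifting already done in Theorems \ref{unif:bias} and \ref{unif:sto}. For the unnormalized statement I split
$$\T_{n,\e}[f](x) - \rho(x)f(x) = \big(\T_{n,\e}[f](x) - \T_{\e}[f](x)\big) + \big(\T_{\e}[f](x) - \rho(x)f(x)\big),$$
take the supremum over $x \in \M$, and use subadditivity. The stochastic term is controlled via \eqref{unif:sto-res1}, which applies since $f \in C^2(\M) \subset C^0(\M)$ and which delivers exactly the last two terms in \eqref{unif:cons-res} on an event of probability at least $1-\delta$. For the bias term, the $C^2$-version of Theorem \ref{unif:bias} gives the leftover $o(\e^2)$, and applying the triangle inequality to \eqref{unif:bias-res1} yields
$$\sup_{x \in \M}\big|\T_{\e}[f](x) - \rho(x)f(x)\big| \;\le\; \frac{\e^2}{2}\sup_{x \in \M}\big|c(x)\rho(x)f(x) + \Delta_\M(\rho f)(x)\big| + o(\e^2).$$
Since $\M$ is compact with $\rho$ smooth and $f \in C^2$, the right-hand side is $O(\e^2)$, contributing the $\e^2$ term in \eqref{unif:cons-res}.

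For the normalized statement I proceed identically, splitting $\TT_{n,\e}[f](x) - f(x)$ as $(\TT_{n,\e}[f](x) - \TT_{\e}[f](x)) + (\TT_{\e}[f](x) - f(x))$. The stochastic term is controlled by \eqref{unif:sto-res2}, which is precisely where the lower bound $\e > \e_{n,\delta}^{*}(C_0^*)$ is needed: it ensures that the denominator $\T_{n,\e}[1](x)$ stays bounded away from zero uniformly in $x$, which in turn is why the constant $C_0^*$ and the extra dependence on $\rho_\tmin$ appear. The bias is controlled by \eqref{unif:bias-res2} together with $\sup_{x \in \M}|\Delta_{\M,2}f(x)| < \infty$ (by compactness of $\M$ and smoothness of $f, \rho$), again giving an $O(\e^2)$ contribution.

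The only nontrivial work is the constant bookkeeping. I take $\e_0$ to be the minimum of the values supplied by the two parent theorems, and $C_0$ to be the sum of their individual constants plus the uniform bound on $c\rho f + \Delta_\M(\rho f)$ (respectively on $\Delta_{\M,2}f$). All dependencies on $\M, f, \rho$ stated in Theorems \ref{unif:bias} and \ref{unif:sto} then carry over verbatim, and for the normalized case the additional dependence on $\rho_\tmin$ (inherited from $C_0^*$ and from \eqref{unif:bias-res2}) propagates as claimed. I do not anticipate any real obstacle, since no new analytic estimate is required beyond those already established.
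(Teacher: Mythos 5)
Your proof is correct and takes exactly the route the paper intends: the paper presents this corollary as an immediate consequence of Theorems~\ref{unif:bias} and~\ref{unif:sto}, and your triangle-inequality decomposition into stochastic and bias parts, together with the observation that $\sup_{x\in\M}|c(x)\rho(x)f(x)+\Delta_\M(\rho f)(x)|$ (resp.\ $\sup_{x\in\M}|\Delta_{\M,2}f(x)|$) is finite by compactness and smoothness so the bias contributes $O(\e^2)$, is precisely the intended argument. The constant bookkeeping and the role of the lower bound $\e_{n,\delta}^{*}(C_0^*)$ are also handled correctly.
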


% ===============================
% ===== Berry-Esseen bounds =====
% ===============================
\section{Asymptotic normality and Berry-Esseen bounds} \label{Berry}

We use the following notation: 
$(z_j)_{j=1}^m$ is an $m$-dimensional vector with $j$th component $z_j$,
and $\diag(z_j)_{j=1}^m$ or $\diag(z_1,...z_m)$ is the $m \times m$ diagonal matrix with $j$th diagonal component  $z_j$.
For $\mu \in \R^m$ and $\Sigma \in \R^{m \times m}$, $\Sigma \ge 0$, 
$\cN_m \left(\mu, \Sigma \right)$ denotes the $m$-dimensional normal with mean $\mu$ and covariance matrix 
$\Sigma$, and $\Phi_{m,\mu,\Sigma}$ denotes the corresponding probability measure.
$\cvx(\R^m)$ is the collection of all convex subsets of $\R^m$.

First, consider the unnormalized case.
Below we state several results on multivariate asymptotic normality for kernel smoothing vectors 
\begin{equation*}
	\sqrt{n \e^d} \big( \T_{n,\e} [f] (x_j) - \T_{\e} [f] (x_j) \big)_{j=1}^m,
\end{equation*}
with $x_1\ldots, x_m \in \M.$ 
To this end, we derive the Berry-Esseen type bounds for a generalization of the KS-distance to the multivariate case, given by
\begin{align*}
	\distcvx(Z, \cN_\Sigma) := \sup_{A \in \cvx(\R^m)} | \P(Z \in A) - \Phi_{m,0,\Sigma}(A) |,
\end{align*}
where $Z \in \R^m$ is a random vector, and $\cN_\Sigma \sim \cN_m(0,\Sigma)$.
Such bounds then imply multivariate asymptotic normality of kernel smoothing.

\begin{theorem} \label{Berry:unnor}	
	Let $f\in C^2(\M)$.	
	For distinct points $x_1, \dots, x_m \in \M$ with $f(x_j) \ne 0, j=1,\dots,m$, let
	\begin{align*}
		\bfZ_{n,\e,f} &= \sqrt{n \e^d} \left( \T_{n,\e} [f] (x_j) - \T_{\e} [f] (x_j) \right)_{j=1}^m, \\
		\tbfZ_{n,\e,f} &= \sqrt{n \e^d} \left( \T_{n,\e} [f] (x_j) - \rho(x_j) f(x_j) \right)_{j=1}^m, \\
		\Sigma_{f} &= \diag \bigg( \frac{\rho(x_j) f^2(x_j)}{(4\pi)^{d/2}} \bigg)_{j=1}^m.
	\end{align*}
	There exist $\e_0,C_0,C_1,C_2 > 0$ such that if $\e < \e_0$, we have
	\begin{align}
		\distcvx(\bfZ_{n,\e,f}, \cN_{\Sigma_f}) &\leq \frac{C_1}{\sqrt{n\e^d}} + C_2 \e^{2 \wedge d}, 
		\label{Berry:unnor-res1} \\
		\distcvx(\tbfZ_{n,\e,f}, \cN_{\Sigma_f}) &\leq \frac{C_1}{\sqrt{n\e^d}} + C_2 \e^{2 \wedge d}+C_0 \sqrt{n\e^{d+4}}.
		\label{Berry:unnor-res2}
	\end{align}
	The constants $\e_0,C_0,C_1,C_2$ are $\M$-dependent, and they also depend on the $C^2$-norms of $f$ and $\rho$, $m$, and the values $f(x_j)$, $\rho(x_j), j = 1,\ldots,m$.
\end{theorem}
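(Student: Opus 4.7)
The plan is to express $\bfZ_{n,\e,f} = \tfrac{1}{\sqrt n}\sum_{i=1}^n \bfW_i$ with i.i.d.\ centered vectors $\bfW_i \in \R^m$, apply a multivariate Berry--Esseen bound for convex sets, and then compare the resulting Gaussian with $\cN_{\Sigma_f}$ via a covariance perturbation argument. For \eqref{Berry:unnor-res2}, I would add a deterministic-shift correction using Theorem~\ref{unif:bias}.

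Setting $\bfW_i := \sqrt{\e^d}\bigl(K_\e(\|X_i-x_j\|_{\R^D})f(X_i) - \T_\e[f](x_j)\bigr)_{j=1}^m$ makes $\bfZ_{n,\e,f}$ a sum of i.i.d.\ centered vectors with covariance $\Sigma_{n,\e} := \cov(\bfW_1)$. First I would show $\|\Sigma_{n,\e} - \Sigma_f\|_{\op} \lesssim \e^{2\wedge d}$. For the diagonal entries, I would push the integral to $T_{x_j}\M$ via $\Exp_{x_j}$, replace $\|\Exp_{x_j}(v)-x_j\|_{\R^D}^2$ by $\|v\|^2$ up to a curvature correction of order $\|v\|^4$ (Proposition~\ref{smoothing:aux:expansion}), Taylor-expand $f^2\rho$ and the volume Jacobian, and evaluate the resulting Gaussian integrals; the leading term is $(4\pi)^{-d/2}\rho(x_j)f^2(x_j)$ and the remainder from $\e^d\,\E[K_\e^2 f^2]$ is $O(\e^2)$, while the subtracted squared-mean contribution $\e^d(\T_\e[f](x_j))^2$ is $O(\e^d)$, altogether giving the $\e^{2\wedge d}$ rate. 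For $j\ne k$, using $\|u-x_j\|^2 + \|u-x_k\|^2 \ge \tfrac12 \|x_j-x_k\|^2$ yields off-diagonals of order $e^{-c/\e^2}$, hence negligible. I would also establish the crude sup and third-moment bounds $\|\bfW_i\|_\infty \lesssim \e^{-d/2}$ and $\E\|\bfW_1\|^3 \lesssim \e^{-d/2}$ via $\E[K_\e^p(\|X_1-x_j\|)] \lesssim \e^{-(p-1)d}$.

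Next I would invoke a Bentkus-type multivariate Berry--Esseen bound for convex sets to obtain
$$\distcvx\!\bigl(\bfZ_{n,\e,f},\,\cN_m(0,\Sigma_{n,\e})\bigr) \;\lesssim\; \frac{1}{\sqrt n}\,\E\bigl\|\Sigma_{n,\e}^{-1/2}\bfW_1\bigr\|^3 \;\lesssim\; \frac{1}{\sqrt{n\e^d}},$$
followed by a centered-Gaussian comparison that bounds $\distcvx(\cN_m(0,\Sigma_{n,\e}),\cN_{\Sigma_f})$ in terms of $\|\Sigma_{n,\e}-\Sigma_f\|_{\op}$ (up to $m$-dependent factors), yielding the $C_2\e^{2\wedge d}$ term. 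The triangle inequality then gives \eqref{Berry:unnor-res1}. For \eqref{Berry:unnor-res2}, observe that $\tbfZ_{n,\e,f} - \bfZ_{n,\e,f}$ is the deterministic vector $\sqrt{n\e^d}\bigl(\T_\e[f](x_j)-\rho(x_j)f(x_j)\bigr)_{j=1}^m$ of norm $O\bigl(\sqrt{n\e^{d+4}}\bigr)$ by Theorem~\ref{unif:bias}; a deterministic shift changes the $\distcvx$-distance to a centered Gaussian by at most the Gaussian mass of a slab of that width, producing the extra $C_0\sqrt{n\e^{d+4}}$ contribution.

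The main obstacle is extracting the stated $\e^{2\wedge d}$ rate (rather than merely $\e^2$) from the Gaussian-to-Gaussian comparison uniformly across dimensions: in low dimensions the $\e^d$ scaling of the noise forces one to track carefully how covariance perturbations interact with the anti-concentration of $\cN_{\Sigma_f}$, and one cannot naively apply a TV bound without losing the right scaling. A secondary difficulty is bookkeeping: ensuring that constants from the exponential-chart expansions, the curvature corrections comparing $\|u-x\|_{\R^D}$ to the geodesic distance, and the third-moment estimates all absorb into $\M$-dependent constants together with the $C^2$-norms of $f,\rho$ and the point values $f(x_j),\rho(x_j)$.
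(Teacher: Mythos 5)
Your proposal follows essentially the same route as the paper's proof: write $\bfZ_{n,\e,f} = \tfrac{1}{\sqrt n}\sum_i Z_i$ with i.i.d.\ centered vectors, apply a Bentkus-type multivariate Berry--Esseen bound for convex sets to compare with $\cN_m(0,\Sigma_{f,\e})$ where $\Sigma_{f,\e}=\var_\P(Z_1)$, and then control the remaining Gaussian-to-Gaussian distance plus the deterministic bias shift. The moment estimates you sketch (diagonal covariance entries equal to $\rho(x_j)f^2(x_j)/(4\pi)^{d/2}$ up to $O(\e^2)+O(\e^d)$, exponentially small off-diagonals via the separation of the $x_j$, and $\E\|Z_1\|^3=O(\e^{-d/2})$) are precisely the paper's.

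The ``main obstacle'' you flag at the end is not actually an issue. The paper's Lemma~\ref{Berry:unnor:compare} (from Panov and Spokoiny) gives, for centered Gaussians, a bound of the form $\tfrac12\|\Sigma_f^{-1/2}\Sigma_{f,\e}\Sigma_f^{-1/2}-I_m\|_\F$, i.e.\ \emph{linear} in the normalized covariance perturbation. Since $\Sigma_f$ is a fixed invertible matrix (it depends on the fixed points $x_j$, not on $\e$), $\|\Sigma_f^{-1}\|_\op$ is a constant, and Lemma~\ref{Berry:unnor:matconv} converts $\|\Sigma_{f,\e}-\Sigma_f\|_\F = O(\e^{2\wedge d})$ directly into the claimed $C_2\e^{2\wedge d}$ with no degradation from anti-concentration; the $\e^{2\wedge d}$ rate is simply the size of the covariance error, with the $\e^d$ contribution coming exactly from the subtracted squared mean that you identify. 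Likewise, your deterministic-shift step for \eqref{Berry:unnor-res2} is exactly what the paper's Lemma~\ref{Berry:unnor:error} packages: the added term is $C_m\delta\sqrt{\|\Sigma_f^{-1}\|_\F}$ with $\delta = O(\sqrt{n\e^{d+4}})$ from Theorem~\ref{unif:bias}, which is your Gaussian-slab bound.
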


Observe that the limit variance of the $j$th component equals zero when $f(x_j)=0.$ This suggests a faster rate of convergence holds in this case, and we will return to this point when discussing the relation to the estimation of the Laplace-Beltrami operator below.

The following corollary is immediate:

\begin{corollary}
	In the setting of the above theorem:
	\begin{enumerate}[label=(\alph*)]
		\item If $\e \rightarrow 0$ and $n \e^d \rightarrow \infty$, we have
		\begin{equation*}
			\sqrt{n \e^d} \big( \T_{n,\e} [f] (x_j) - \T_{\e} [f] (x_j) \big)_{j=1}^m
			\to \cN_m \left( 0, \Sigma_{f} \right)
		\end{equation*}
		in distribution.
		
		\item If, additionally, $n \e^{d+4} \rightarrow 0$, then the assertion of part (a) also holds when 
		$\T_{\e} [f] (x_j)$ is replaced by $\rho(x_j) f(x_j)$, $j = 1,\ldots,m$. 
	\end{enumerate}
\end{corollary}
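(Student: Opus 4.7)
The plan is to deduce both parts directly from Theorem~\ref{Berry:unnor} by checking that each term on the right-hand side of the Berry–Esseen bounds \eqref{Berry:unnor-res1} and \eqref{Berry:unnor-res2} vanishes under the stated asymptotic regime, and then passing from convergence in $\distcvx$ to ordinary weak convergence. For part (a), under $\e\to 0$ and $n\e^d\to\infty$, the first term $C_1/\sqrt{n\e^d}$ tends to zero by assumption, while $C_2 \e^{2\wedge d}\to 0$ because $\e\to 0$. Hence $\distcvx(\bfZ_{n,\e,f},\cN_{\Sigma_f})\to 0$. For part (b) we invoke \eqref{Berry:unnor-res2} instead; the first two terms are handled as above, and the additional term $C_0\sqrt{n\e^{d+4}}$ vanishes precisely under the extra hypothesis $n\e^{d+4}\to 0$.

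The only non-bookkeeping step is the passage from $\distcvx\to 0$ to convergence in distribution. The plan is to argue that, since the limit law $\cN_m(0,\Sigma_f)$ is absolutely continuous (here $\Sigma_f$ is a diagonal matrix with strictly positive entries, which is where the assumption $f(x_j)\ne 0$ enters), the collection $\cvx(\R^m)$ is a $\cN_m(0,\Sigma_f)$-convergence-determining class. Concretely, every open and every closed set in $\R^m$ can be approximated from inside and outside by finite unions and intersections of half-spaces (which are convex), and the Gaussian measure of the boundary of any convex set is zero; combining this with the Portmanteau theorem yields $\bfZ_{n,\e,f}\Rightarrow \cN_m(0,\Sigma_f)$. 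The same argument applies to $\tbfZ_{n,\e,f}$ in part (b).

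I do not expect any genuine obstacle here: the Berry–Esseen bounds in Theorem~\ref{Berry:unnor} were stated uniformly over convex sets precisely so that this corollary follows without further work. If one wanted a purely self-contained argument avoiding the Portmanteau step, one could alternatively test $\distcvx$ on the family of half-spaces $\{z: a^\top z \le t\}$ for arbitrary $a\in\R^m$ and $t\in\R$, which recovers convergence of all one-dimensional marginals of $\bfZ_{n,\e,f}$ to those of $\cN_m(0,\Sigma_f)$, and then appeal to the Cramér–Wold device. Either route makes the corollary an immediate consequence of Theorem~\ref{Berry:unnor}, consistent with the paper's claim.
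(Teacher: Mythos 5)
Your proposal is correct and matches the paper's (unwritten, as the paper simply calls the corollary ``immediate'') reasoning: the right-hand sides of \eqref{Berry:unnor-res1} and \eqref{Berry:unnor-res2} vanish under the stated regimes, and since $\distcvx$ is taken over a class containing all half-spaces (or rectangles), its convergence to zero implies weak convergence to the absolutely continuous limit $\cN_m(0,\Sigma_f)$ — the Cram\'er--Wold route you sketch is the cleanest way to make this self-contained. The requirement $f(x_j)\ne 0$ does indeed enter exactly where you identify it, namely in ensuring $\Sigma_f$ is nonsingular so the limit law is absolutely continuous.
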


These results are of course in line with standard results for the KDE on $\R^d$ (see, e.g., \cite{nishiyama2011ImpossibilityWeak}).

Below is a Berry-Esseen bound for the normalized case:

\begin{theorem} \label{Berry:nor}
	Let $f\in C^2(\M)$.	For distinct $x_1, \dots, x_m \in \M$ with $\|\nabla_\M f(x_j)\| \ne 0, j=1,\dots,m$, let
	\begin{align*}
		\overline{\bfZ}_{n,\e,f} &= \sqrt{n \e^{d-2}} \left( \TT_{n,\e} [f] (x_j) - \TT_{\e} [f] (x_j) \right)_{j=1}^m, \\
		\widetilde{\overline{\bfZ}}_{n,\e,f} &= \sqrt{n \e^{d-2}} \left( \TT_{n,\e} [f] (x_j) - f(x_j) \right)_{j=1}^m, \\
		\overline\Sigma_{f} &= \diag \left( \frac{\|\nabla_\M f(x_j)\|^2}{2(4\pi)^{d/2} \rho(x_j)} \right)_{j=1}^m.
	\end{align*}
	Then, there exist $\e_0,C_0$-$C_3 > 0$ such that if $\e < \e_0$ and $n\e^d > C_3$, we have
	\begin{align}
		\distcvx(\overline\bfZ_{n,\e,f}, \cN_{\overline\Sigma_f}) &\leq \frac{C_1 \log(n\e^d)}{\sqrt{n\e^d}} + C_2 \e, 
		\label{Berry:nor-res1} \\
		\distcvx(\widetilde{\overline\bfZ}_{n,\e,f}, \cN_{\overline\Sigma_f}) &\leq \frac{C_1 \log(n\e^d)}{\sqrt{n\e^d}} + C_2 \e + C_0 \sqrt{n\e^{d+2}}.
		\label{Berry:nor-res2}
	\end{align}
	The constants $\e_0,C_0$ - $C_3$ are $\M$-dependent, and they also depend on $\rho_\tmin$, the $C^2$-norms of $f$ and $\rho$, $m$, and the values $\|\nabla_\M f(x_j)\|$, $\rho(x_j)$, $j = 1,\ldots,m$.
\end{theorem}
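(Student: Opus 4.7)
The plan is to reduce the normalized case to a centered i.i.d.\ sum through the shift invariance trick already hinted at after Theorem~\ref{unif:sto}, and then invoke a multivariate Berry-Esseen bound of Bentkus type for convex sets. First, I would exploit the identity $\TT_{n,\e}[f](x)-\TT_\e[f](x) = \TT_{n,\e}[\tilde f_x](x) - \TT_\e[\tilde f_x](x)$ with $\tilde f_{x_j}:=f-f(x_j)$, so that $\tilde f_{x_j}(x_j)=0$. By Theorem~\ref{unif:bias} applied to $\tilde f_{x_j}$ this immediately gives $\T_\e[\tilde f_{x_j}](x_j)=O(\e^2)$, and by a first-order Taylor expansion of $f$ at $x_j$ along normal coordinates one has $|\tilde f_{x_j}(X_i)|=O(\|X_i-x_j\|_{\R^D})$ on the effective support of $K_\e$.

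Next, writing $a_j'=\T_{n,\e}[\tilde f_{x_j}](x_j)$, $b_j'=\T_{n,\e}[1](x_j)$, and their non-random counterparts $a_j,b_j$, I would use the algebraic decomposition
\begin{equation*}
\TT_{n,\e}[f](x_j)-\TT_\e[f](x_j)
= \frac{a_j'-a_j}{b_j}\;-\;\frac{a_j\,(b_j'-b_j)}{b_j\,b_j'}\;-\;\frac{(a_j'-a_j)(b_j'-b_j)}{b_j\,b_j'}.
\end{equation*}
The last two terms are controlled by the stochastic sup-norm bounds of Theorem~\ref{unif:sto} (which, for $n\e^d>C_3$, also forces $b_j'$ bounded away from $0$ uniformly with high probability) together with the bias bound $a_j=O(\e^2)$; after multiplication by $\sqrt{n\e^{d-2}}$ these remainders contribute $O(\e\sqrt{\log(1/\e)})$ and $O(\log(n\e^d)/\sqrt{n\e^d})$, accounting for the two deterministic terms in \eqref{Berry:nor-res1}.

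The leading linear term is $n^{-1/2}\sum_{i=1}^n Y_i$ where $Y_i=(Y_{i,j})_{j=1}^m$ with
\begin{equation*}
Y_{i,j}=\frac{\e^{(d-2)/2}}{b_j}\bigl\{K_\e(\|X_i-x_j\|_{\R^D})\tilde f_{x_j}(X_i)-\E\,K_\e(\|X_1-x_j\|_{\R^D})\tilde f_{x_j}(X_1)\bigr\}.
\end{equation*}
A change of variables to exponential coordinates at $x_j$, using Proposition~\ref{smoothing:aux:expansion} to relate $\|\cdot-x_j\|_{\R^D}$ to the geodesic distance, followed by Taylor expansion of $f$ and the volume element, gives $\var(Y_{i,j})=\overline\Sigma_f(j,j)+O(\e^2)$; the cross covariances $\cov(Y_{i,j},Y_{i,k})$ for $j\ne k$ are exponentially small in $\e^{-2}$ because the product of the two Gaussian kernels is then supported on a set of exponentially small $\P$-mass under Assumption~\ref{smoothing:assume:manifold}. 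Since $|Y_{i,j}|\lesssim \e^{-d/2}$ (using $|\tilde f_{x_j}(X_i)|\lesssim\e$ on the effective support of $K_\e$), one gets $\E\|Y_1\|^3\lesssim \e^{-d/2}$, and Bentkus's bound for the $\distcvx$-distance to a Gaussian with the sample covariance of $Y_1$ yields the $C_1/\sqrt{n\e^d}$ term; comparing that Gaussian to $\cN_{\overline\Sigma_f}$ absorbs the $O(\e^2)$ covariance perturbation into the $C_2\e$ term (with room to spare).

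For \eqref{Berry:nor-res2}, I would add the deterministic shift $\TT_\e[f](x_j)-f(x_j)=-\tfrac{\e^2}{2}\Delta_{\M,2}f(x_j)+O(\e^{2+\alpha})$ from Theorem~\ref{unif:bias}; the $\sqrt{n\e^{d-2}}$ scaling turns this into a translation of order $\sqrt{n\e^{d+2}}$, which shifts each convex set $A$ by a bounded vector, producing the additional additive term $C_0\sqrt{n\e^{d+2}}$ via a standard convex-set comparison for Gaussian measures. The step I expect to be the main obstacle is making the linearization error and the Berry-Esseen error compatible: one needs the lower bound on $b_j'$ to hold simultaneously over all $j$ with probability at least $1-O(\log(n\e^d)/\sqrt{n\e^d})$, and tracing this through the remainder in the ratio expansion is exactly what produces the logarithmic factor in the first term of \eqref{Berry:nor-res1}--\eqref{Berry:nor-res2}; handling this delicately while keeping the cubic moment and covariance perturbation under simultaneous control is the technical heart of the argument.
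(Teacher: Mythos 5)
Your overall strategy --- exploit shift invariance to reduce $\overline{\bfZ}_{n,\e,f}$ to a centered i.i.d.\ sum, apply a Bentkus-type multivariate Berry-Esseen bound on convex sets, compare the resulting Gaussian to $\cN_{\overline\Sigma_f}$, and absorb remainders via a convex-set perturbation lemma --- coincides with the paper's. The gap lies in where you cut the decomposition. You place the entire middle term $-\frac{a_j(b_j'-b_j)}{b_j b_j'}$ in the remainder; but with $a_j=O(\e^2)$ deterministic and $\sqrt{n\e^d}(b_j'-b_j)$ asymptotically an $\OP(1)$ Gaussian, after scaling by $\sqrt{n\e^{d-2}}$ this term is of exact order $\OP(\e)$, not $\oP(\e)$. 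Passing such a term through the $\delta$-trade-off in the convex-set perturbation bound (Lemma~\ref{Berry:unnor:error}) forces $\delta\asymp\e\sqrt{\log(1/\e)}$ to make the tail $\lesssim\e$, so the contribution is $O(\e\sqrt{\log(1/\e)})$ --- which you correctly identify. But $\e\sqrt{\log(1/\e)}$ is \emph{not} $O(\e)$, so this does not ``account for'' the $C_2\e$ term of \eqref{Berry:nor-res1}; as written, your argument only establishes the weaker bound with $C_2\e$ replaced by $C_2'\e\sqrt{\log(1/\e)}$.

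The fix, and what the paper actually does, is to push the linear-in-$(b_j'-b_j)$ part of this term into the main i.i.d.\ sum rather than the remainder. Writing $\frac{1}{b_j'}=\frac{1}{b_j}-\frac{b_j'-b_j}{b_j b_j'}$ gives
\begin{equation*}
-\frac{a_j(b_j'-b_j)}{b_j b_j'}
=-\frac{a_j(b_j'-b_j)}{b_j^2}+\frac{a_j(b_j'-b_j)^2}{b_j^2\, b_j'}.
\end{equation*}
The first piece has a deterministic denominator and is itself a centered i.i.d.\ sum with an $O(\e)$ coefficient; it merges with your $Y_{i,j}$ to produce exactly the paper's $Z_{ij}=\frac{1}{\T_\e[1](x_j)}\big(V_{ij}-\frac{\TT_\e[f](x_j)-f(x_j)}{\e}U_{ij}\big)$, and its only effect is a deterministic $O(\e)$ perturbation of the covariance matrix, which the Gaussian-to-Gaussian comparison (Lemma~\ref{Berry:unnor:compare}) converts into a clean $C_2\e$. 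The leftover $\frac{a_j(b_j'-b_j)^2}{b_j^2 b_j'}$, together with your third remainder $\frac{(a_j'-a_j)(b_j'-b_j)}{b_j b_j'}$, is genuinely $\OP(1/\sqrt{n\e^d})$ after scaling, and the $\delta$-trade-off then produces only the $\frac{\log(n\e^d)}{\sqrt{n\e^d}}$ term. With this one-step refinement, your variance and cross-covariance computations, the Bentkus bound, and your deterministic-shift argument for \eqref{Berry:nor-res2} all go through and match the paper's proof.
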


We see that the scaling factor $\sqrt{n\e^{d-2}}$ is used to reflect the smaller variance in the normalized case.
The Berry–Esseen convergence rates include logarithmic terms to properly control the denominator in the normalized case.
Also note that the covariance matrix depends on $f$ only through its gradient.
When the gradient is zero, the scaling factor changes, and the asymptotic covariance matrix depends on $f$ through its second-order derivative:

\begin{theorem}	\label{Berry:norcrit}
	Let $f \in C^3(\M)$, and
	let
	\begin{align*}
		\Sigma^\prime_{f} &= \diag \left( \frac{ 2\|\nabla_\M^2 f(x_j)\|^2 + | \Delta_\M f(x_j)|^2 }{16(4\pi)^{d/2}\rho(x_j)} \right)_{j=1}^m.
	\end{align*}
	For distinct critical points $x_1, \dots, x_m \in \M$ of $f$ with $\|\nabla^2_\M f(x_j)\| \ne 0, j=1,\dots,m$, there exist $\e_0,C_0,C_1,C_2,C_3 > 0$ such that if $\e < \e_0$ and $n\e^d > C_3$, then we have with $\overline\bfZ_{n,\e,f}$ and $\tilde{\overline\bfZ}_{n,\e,f}$ from Theorem~\ref{Berry:unnor} 
	\begin{align}
		\distcvx(\e^{-1}\overline\bfZ_{n,\e,f}, \cN_{\Sigma^\prime_f}) &\leq \frac{C_1 \log(n\e^d)}{\sqrt{n\e^d}} + C_2 \e, 
		\label{Berry:norcrit-res1} \\
		\distcvx(\e^{-1}\tilde{\overline\bfZ}_{n,\e,f}, \cN_{\Sigma^\prime_f}) &\leq \frac{C_1 \log(n\e^d)}{\sqrt{n\e^d}} + C_2 \e + C_0 \sqrt{n\e^{d}}.
		\label{Berry:norcrit-res2}
	\end{align}
	The constants $\e_0,C_0$ - $C_3$ are $\M$-dependent, and they also depend on $\rho_\tmin$, the $C^3$-norms of $f$ and $\rho$, and the values $\|\nabla^2_\M f(x_j)\|$, $|\Delta_\M f(x_j)|$, $\rho(x_j)$, $j = 1,\ldots,m$.
\end{theorem}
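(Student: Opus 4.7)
The plan is to mirror the proof of Theorem~\ref{Berry:nor}, but to carry the Taylor expansion of $f$ one order further, since at each critical point $x_j$ the gradient vanishes and the Hessian becomes the leading signal. By shift invariance of the normalized operator, with $g_j := f - f(x_j)$, $U_n^{(j)} := \T_{n,\e}[g_j](x_j)$, and $V_n^{(j)} := \T_{n,\e}[1](x_j)$, I would expand
\begin{equation*}
\overline\T_{n,\e}[f](x_j) - \overline\T_\e[f](x_j)
= \frac{U_n^{(j)} - \E U_n^{(j)}}{\E V_n^{(j)}}
- \frac{\E U_n^{(j)}}{(\E V_n^{(j)})^2}\bigl(V_n^{(j)} - \E V_n^{(j)}\bigr)
+ R_n^{(j)}.
\end{equation*}
At the critical point $x_j$, Theorem~\ref{unif:bias} applied to $g_j$ gives $\E U_n^{(j)} = O(\e^2)$ and $\E V_n^{(j)} = \rho(x_j) + O(\e^2)$, so the second term is $\OP(\e^2(n\e^d)^{-1/2})$ and is negligible after the overall $\e^{-1}\sqrt{n\e^{d-2}}$ rescaling; the remainder $R_n^{(j)}$ is controlled via concentration of $V_n^{(j)}$ around $\rho(x_j)$ on a high-probability event (assured by $n\e^d > C_3$), exactly as in the non-critical case.

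Next I would identify the limit covariance. In normal coordinates at $x_j$, the $C^3$-regularity of $f$ and the vanishing of $\nabla_\M f(x_j)$ yield $g_j(\Exp_{x_j}(u)) = \tfrac{1}{2}u^\top H_j u + O(\|u\|^3)$ with $H_j = \nabla^2_\M f(x_j)$. Using $K_\e^2(\|u\|) = (4\pi\e^2)^{-d/2} K_{\e/\sqrt 2}(\|u\|)$, the isotropic Gaussian fourth-moment identity $\int_{\R^d} K_h(\|u\|)(u^\top H u)^2\,du = h^4\bigl[(\tr H)^2 + 2\|H\|_{\F}^2\bigr]$, the volume-form expansion of Proposition~\ref{smoothing:aux:expansion}, and the identifications $\tr H_j = -\Delta_\M f(x_j)$ and $\|H_j\|_{\F} = \|\nabla^2_\M f(x_j)\|$, a direct computation gives
\begin{equation*}
\var\bigl(K_\e(\|X-x_j\|)\,g_j(X)\bigr)
= \frac{\e^{4-d}\,\rho(x_j)\bigl(2\|\nabla^2_\M f(x_j)\|^2 + |\Delta_\M f(x_j)|^2\bigr)}{16(4\pi)^{d/2}}\,(1+O(\e)),
\end{equation*}
which, after dividing by $\rho(x_j)^2$ and applying the $\e^{-1}\sqrt{n\e^{d-2}}/\sqrt n$ rescaling, reproduces exactly the $j$-th diagonal entry of $\Sigma'_f$. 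Cross-covariances between distinct $x_j, x_k$ decay like $\exp(-c\|x_j - x_k\|^2/\e^2)$ by the Gaussian localization, so the limit covariance is diagonal.

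Finally, a multivariate Berry-Esseen bound for i.i.d. sums over convex sets (Bentkus-type) applied to $n^{-1}\sum_i (Y_i^{(j)})_{j=1}^m$ with $Y_i^{(j)} := K_\e(\|X_i-x_j\|)\,g_j(X_i) - \E[\cdot]$ delivers the stated rate: each $Y_i^{(j)}$ is bounded by $O(\e^{2-d})$ with variance of order $\e^{4-d}$, so the standardized third-moment ratio is of order $(n\e^d)^{-1/2}$, while the additional $\log(n\e^d)$ factor arises from the uniform control of the denominator $V_n^{(j)}$ together with a Gaussian tail truncation, as in Theorem~\ref{Berry:nor}. The $C_2\e$ term absorbs the cubic Taylor remainder (relative size $\e^3/\e^2 = \e$), the geodesic-vs-Euclidean discrepancy from Proposition~\ref{smoothing:aux:expansion}, and the quadratic correction in the ratio linearization. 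For \eqref{Berry:norcrit-res2}, adding the deterministic bias $\overline\T_\e[f](x_j) - f(x_j) = -\tfrac{\e^2}{2}\Delta_{\M,2} f(x_j) + O(\e^{2+\alpha})$ from Theorem~\ref{unif:bias} and rescaling by $\e^{-1}\sqrt{n\e^{d-2}}$ produces the extra $O(\sqrt{n\e^d})$ term. The main obstacle is the careful bookkeeping of all these higher-order remainders: because the leading signal has dropped by a factor of $\e$ relative to Theorem~\ref{Berry:nor}, each expansion (Taylor, volume form, kernel-argument substitution, ratio linearization) must be carried one order further to ensure the respective residual contributes no more than $C_2\e$ after the $\e^{-1}$ rescaling.
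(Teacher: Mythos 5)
Your decomposition
\begin{equation*}
\overline\T_{n,\e}[f](x_j) - \overline\T_\e[f](x_j)
= \frac{U_n^{(j)} - \E U_n^{(j)}}{\E V_n^{(j)}}
- \frac{\E U_n^{(j)}}{(\E V_n^{(j)})^2}\bigl(V_n^{(j)} - \E V_n^{(j)}\bigr)
+ R_n^{(j)}
\end{equation*}
is in fact the same linearization the paper uses, but your treatment of the second term contains a genuine error. You claim it is $\OP\bigl(\e^2(n\e^d)^{-1/2}\bigr)$ and hence ``negligible after the $\e^{-1}\sqrt{n\e^{d-2}}$ rescaling,'' but multiplying these out gives $\sqrt{n\e^{d-4}}\cdot \e^2 (n\e^d)^{-1/2} = 1$. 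The second term is of exactly the same stochastic order $\OP(1)$ as the first: at a critical point, $\E U_n^{(j)} = \T_\e[g_j](x_j) = -\tfrac{\e^2}{2}\rho(x_j)\Delta_\M f(x_j) + O(\e^3)$ has a \emph{nonvanishing} leading coefficient, and $V_n^{(j)} - \E V_n^{(j)}$ fluctuates at rate $(n\e^d)^{-1/2}$, the same rate as $U_n^{(j)} - \E U_n^{(j)}$ once the $\e^{4-d}$ variance scaling of the latter is accounted for. You cannot drop it.

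What actually happens is a cancellation that your argument neither states nor verifies. Writing $a_j := (\TT_\e[f](x_j)-f(x_j))/\e^2 \to -\tfrac12\Delta_\M f(x_j)$, the effective per-observation influence function is $\tfrac{1}{\rho(x_j)}\bigl(V_{ij} - a_j U_{ij}\bigr)$, as in the paper's proof. The variance of this quantity is
$\tfrac{1}{\rho^2(x_j)}\bigl[\E V_{ij}^2 - 2 a_j \E U_{ij}V_{ij} + a_j^2 \E U_{ij}^2\bigr]$,
and by Lemma~\ref{Berry:norcrit:var} the cross term and the quadratic term are \emph{each} of leading order $\rho(x_j)(\Delta_\M f(x_j))^2/(4(4\pi)^{d/2})$, with opposite signs, so they cancel and one is left with $\E V_{ij}^2/\rho^2(x_j)$, which is precisely your formula. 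Your final expression for $\Sigma'_f$ is therefore correct, but by accident: your derivation only computes $\var(K_\e(\|X-x_j\|)g_j(X))/\rho^2(x_j)$ and never shows that the contributions of the denominator-fluctuation term (its variance plus twice its covariance with the numerator term) vanish at leading order. That cancellation, established via the explicit computation of $\E_\P U_{i1}V_{i1}$ in Lemma~\ref{Berry:norcrit:var}, is the real content of the critical-point case, and it is what distinguishes the proof from the non-critical Theorem~\ref{Berry:nor}. To repair your argument, carry the second term through, compute the full variance of the combined influence function, and invoke the cancellation explicitly; the rest of your Berry--Esseen and remainder handling is sound.
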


Note that the right-hand side in \eqref{Berry:norcrit-res2} cannot converge to zero.
This indicates that for the asymptotic normality of $\TT_{n,\e} [f] (x)$ at a critical point $x \in \M$,
the bias term cannot be removed without further assumptions.

% ===== Laplace-Beltrami operator =====
\subsection{Relation to the estimation of the Laplace-Beltrami operator} \label{Berry:LB-operator}

There is a close relation of normalized kernel smoothing to the estimation of the Laplace-Beltrami operator.
A few experts might be aware of these relations, but it still seems worthwhile to discuss them here.

The standard smoothing estimator of the Laplace-Beltrami operator is as follows.
First recall the random walk graph Laplacian. 
With ${\mathbb K}_{ij} = K_{\sqrt{2}\e} (\|X_i - X_j)\|),$ a random geometric graph is defined by the affinity matrix $ {\mathbb K} = ({\mathbb K}_{ij})_{i,j=1,\dots,n}.$
Then the corresponding random walk graph Laplacian is
\begin{equation*}
	\Delta_{n,\e} = \frac{1}{\e^2} \big(I_n-D_{\mathbb K}^{-1} {\mathbb K}\big)
\end{equation*}
with degree matrix $D_{\mathbb K} = {\rm diag}(D_1,\ldots,D_n)$ where $D_{i} = \sum_{j=1}^n {\mathbb K}_{ij}$, and $I_n$ the $n \times n$ identity matrix. 
The factor $\frac{1}{\e^2}$ is a convenient normalization (e.g. see \cite{coifman2006diffusion, hein2007graph}), and see (\ref{unif:bias-res2}) for a motivation.
The corresponding operator version (acting on functions $f$) is
\begin{equation}
	\Delta_{n,\M,2}f (x) = \frac{\frac{1}{n\e^{2}}\sum\limits _{j=1}^nK_{\sqrt{2}\e} (\|X_j - x\|_{\R^D}) 
	(f(x) - f(X_j))}{\frac{1}{n}\sum\limits _{j=1}^nK_{\sqrt{2}\e} (\|X_j - x\|_{\R^D})}.
\end{equation}

Clearly, $\Delta_{n,\M,2}f (x) = \frac{1}{\e^2}\big(f(x)-\TT_{n,{\sqrt{2}\e}} [f] (x)\big)$, and using Theorem~\ref{unif:bias}, it is straightforward to see that for $f \in C^{2,\alpha}(\M)$,
\begin{align*}
	\sqrt{n \e^{d-2}} \big(\TT_{n,{\sqrt{2}\e}} [f] (x) - \TT_{{\sqrt{2}\e}} [f] (x)\big) 
	=-\sqrt{n \e^{d+2}}\big(\Delta_{n,\M,2}f(x) - \Delta_{\M,2}f(x)\big) + O\big(\sqrt{n\e^{d+2+\alpha }}\big).
\end{align*}

Thus, if $n\e^{d+2+\alpha } \to 0$, then convergence in distribution of the normalized kernel smoothing is the same as that of the normalized smoothing estimator of the Laplace-Beltrami operator. 

\begin{corollary} \label{Berry:Laplace}
	In the setting of Theorem~\ref{Berry:nor}, for $f \in C^{2,\alpha}(\M)$ and
	\begin{align*}
		{\mathbf L}_{n,\e,f} &= \sqrt{n \e^{d+2}} \big(\Delta_{n,\M,2}f (x_j) - \Delta_{\M,2}f (x_j) \big)_{j=1}^m,
	\end{align*}
	we have
	\begin{equation}
		\distcvx({\mathbf L}_{n,\e,f}, \cN_{\overline\Sigma_f}) \leq \frac{C_1 \log(n\e^d)}{\sqrt{n\e^d}} + C_2 \e + C_0 \sqrt{n\e^{d+2+\alpha}}.
	\end{equation}
\end{corollary}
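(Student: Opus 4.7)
The plan is to reduce Corollary \ref{Berry:Laplace} to Theorem \ref{Berry:nor} by exploiting the deterministic identity displayed in the paragraph preceding the corollary. Concretely, I will write $\mathbf{L}_{n,\e,f}$ as a rescaling of the kernel smoothing vector $\overline{\bfZ}_{n,\sqrt{2}\e,f}$ plus a deterministic bias vector controlled by Theorem \ref{unif:bias}, then invoke the Berry-Esseen bound from Theorem \ref{Berry:nor} and transfer it across the deterministic shift via Gaussian anti-concentration over convex sets.

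The algebraic identity $\Delta_{n,\M,2}f(x) = \e^{-2}(f(x) - \TT_{n,\sqrt{2}\e}[f](x))$ is immediate from the definitions. Theorem \ref{unif:bias}, applied with bandwidth $\sqrt{2}\e$ so that $(\sqrt{2}\e)^2/2 = \e^2$, gives the uniform bound $\sup_{x \in \M}|f(x) - \TT_{\sqrt{2}\e}[f](x) - \e^2 \Delta_{\M,2}f(x)| \le C_0\,\e^{2+\alpha}$. Multiplying by $\sqrt{n\e^{d-2}}/\e^2$ and collecting terms yields
\begin{equation*}
	\mathbf{L}_{n,\e,f} = -\sqrt{n\e^{d-2}}\bigl(\TT_{n,\sqrt{2}\e}[f](x_j) - \TT_{\sqrt{2}\e}[f](x_j)\bigr)_{j=1}^m + b_{n,\e},
\end{equation*}
where $b_{n,\e} \in \R^m$ is deterministic with $\max_j|b_{n,\e,j}| \le C_0 \sqrt{n\e^{d+2+2\alpha}} \le C_0 \sqrt{n\e^{d+2+\alpha}}$ for $\e \le 1$. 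The random part on the right is, up to a positive dimensional scalar absorbed into constants, the vector $\overline{\bfZ}_{n,\sqrt{2}\e,f}$ studied in Theorem \ref{Berry:nor}.

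Applying Theorem \ref{Berry:nor} at bandwidth $\sqrt{2}\e$ and noting $\log(n(\sqrt{2}\e)^d) = \log(n\e^d) + O(1)$ gives $\distcvx(\overline{\bfZ}_{n,\sqrt{2}\e,f}, \cN_{\overline\Sigma_f}) \le C_1 \log(n\e^d)/\sqrt{n\e^d} + C_2 \e$. Since $\cvx(\R^m)$ is invariant under $A \mapsto -A$ and under translations $A \mapsto A - b_{n,\e}$, and $\cN_{\overline\Sigma_f}$ is symmetric about $0$, a triangle inequality gives
\begin{equation*}
	\distcvx(\mathbf{L}_{n,\e,f}, \cN_{\overline\Sigma_f}) \le \distcvx\bigl(\overline{\bfZ}_{n,\sqrt{2}\e,f}, \cN_{\overline\Sigma_f}\bigr) + \sup_{A \in \cvx(\R^m)}\bigl|\Phi_{m,0,\overline\Sigma_f}(A+b_{n,\e}) - \Phi_{m,0,\overline\Sigma_f}(A)\bigr|.
\end{equation*}
A Nazarov-type Gaussian anti-concentration estimate for convex sets, applied after standardizing by $\overline\Sigma_f^{1/2}$, bounds the supremum by a constant times $\|b_{n,\e}\|_2$, which is at most $C\sqrt{n\e^{d+2+\alpha}}$. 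Combining these three contributions yields the stated inequality.

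The only non-routine step is the anti-concentration estimate: it requires $\overline\Sigma_f^{1/2}$ to be invertible, and this is precisely the reason the condition $\|\nabla_\M f(x_j)\| \ne 0$ is inherited from Theorem \ref{Berry:nor}. The remainder is a mechanical combination of the bias expansion in Theorem \ref{unif:bias} with the previously established Berry-Esseen bound; the various $\sqrt{2}$ factors arising from the bandwidth change $\e \leftrightarrow \sqrt{2}\e$ are dimension-dependent constants that can be absorbed into the universal constants $C_0, C_1, C_2$ in the final bound.
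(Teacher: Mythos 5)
Your proof follows exactly the route the paper intends: rewrite $\mathbf{L}_{n,\e,f}$ via the deterministic identity $\Delta_{n,\M,2}f(x) = \e^{-2}(f(x)-\TT_{n,\sqrt{2}\e}[f](x))$, control the bias with Theorem~\ref{unif:bias}, and invoke Theorem~\ref{Berry:nor} together with a Gaussian anti-concentration estimate over convex sets. In fact, the anti-concentration transfer you appeal to is exactly the paper's Lemma~\ref{Berry:unnor:error} specialized to a deterministic shift $W=b_{n,\e}$ with $\delta = \|b_{n,\e}\|$, which is also how the paper handles the bias vector in the proofs of \eqref{Berry:unnor-res2} and \eqref{Berry:nor-res2}; so your Nazarov-type argument is available inside the paper itself and does not need to be imported. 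Your computation even yields the sharper exponent $\sqrt{n\e^{d+2+2\alpha}}$, which you correctly relax to the stated $\sqrt{n\e^{d+2+\alpha}}$.

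The one place where you are too cavalier is the claim that the bandwidth mismatch produces only ``a positive dimensional scalar absorbed into constants.'' Concretely, $\sqrt{n\e^{d-2}}\big(\TT_{n,\sqrt{2}\e}[f](x_j)-\TT_{\sqrt{2}\e}[f](x_j)\big)_{j} = 2^{-(d-2)/4}\,\overline{\bfZ}_{n,\sqrt{2}\e,f}$, and a deterministic scalar multiple of the random vector cannot be absorbed into the Berry--Esseen constants $C_0,C_1,C_2$: it rescales the limiting Gaussian, replacing $\overline\Sigma_f$ by $2^{-(d-2)/2}\overline\Sigma_f$. For $d\neq 2$ this is not the identity, so as written the chain of inequalities compares $\mathbf{L}_{n,\e,f}$ to the wrong target covariance. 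This imprecision is inherited from the paper's own display preceding the corollary, which uses $\sqrt{n\e^{d-2}}$ rather than $\sqrt{n(\sqrt2\e)^{d-2}}$; but a careful proof should either work with $\overline{\bfZ}_{n,\sqrt{2}\e,f}$ exactly (rescaling $\mathbf{L}_{n,\e,f}$ by $2^{(d-2)/4}$) or explicitly state the target as $\cN_{2^{-(d-2)/2}\overline\Sigma_f}$, rather than waving the factor away.

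Finally, a small point of rigor: to apply Theorem~\ref{Berry:nor} as a black box you implicitly use that its bound is invariant under the map $Z\mapsto -Z$, i.e.\ that $\cvx(\R^m)$ and the centered Gaussian are both symmetric under reflection. You state this correctly; it is worth noting that this is the full content needed to handle the sign flip coming from the identity.
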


Strengthening the smoothness assumptions on $f$ to $C^{2,\alpha}$ gives smaller error rates.
Corollary~\ref{Berry:Laplace} generalizes a similar asymptotic normality result that can be found in \cite{gine2006empirical} from the case of a uniform sample to a sample from $\rho.$ 

% =======================
% ===== Derivatives =====
% =======================
\section{Derivatives of kernel smoothing on manifolds} \label{deriv}

Next, we investigate the uniform convergence of the derivatives of kernel smoothing on manifolds.
Such properties have applications in several areas, including topological data analysis \citep{chazal2017robust} and the mean shift algorithm \citep{zhang2021KernelSmoothing}.
We only consider gradients and Hessians, but expect that similar results can be obtained for higher-order derivatives.
For a compact presentation we use the notation $\nabla_\M^k, k = 1,2$, with $k=1$ corresponding to the gradient and $k=2$ to the Hessian.

\begin{theorem}	\label{deriv:unnor}
	For $k = 1,2$, we have:
	\begin{enumerate}[label=(\alph*)]
		\item
		For $f \in C^{2+k}(\M)$,
		there exist $\e_0, C_0 >0$ such that if $\e < \e_0$, we have 
		\begin{equation}
			\sup_{x \in \M} \big\| \nabla^k_{\M} \T_{\e} [f] (x) - \nabla^k_{\M} (\rho f) (x) \big\|
			\leq C_0 \e^{2},
		\end{equation}
		where $\e_0$ and $C_0$ are $\M$-dependent and they also depend on the $C^{2+k}$-norms of $f$ and $\rho$.
		\item
		For $f \in C^k (\M)$,
		there exist $\e_0, C_0 >0$ such that if $\e < \e_0$, we have with probability at least $1-\delta$, 
		\begin{equation}
			\sup_{x \in \M} \big\| \nabla^k_{\M} \T_{n,\e} [f] (x) - \nabla^k_{\M} \T_{\e} [f] (x) \big\|
			\leq C_0 \bigg( \sqrt{ \frac{\log(1/(\e \wedge \delta))}{n\e^{d+2k}} }
			+ \frac{\log(1/(\e \wedge \delta))}{n \e^{d+2k}} \bigg),
		\end{equation}
		where $\e_0$ and $C_0$ are $\M$-dependent and they also depend on the $C^k$-norms of $f$ and $\rho$.
	\end{enumerate}
\end{theorem}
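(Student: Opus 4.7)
The plan is to treat the bias bound (a) via differentiation under the integral together with a local analysis in normal coordinates, and the stochastic bound (b) via empirical-process concentration for the derivative-of-kernel process indexed by $x \in \M$.

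For (a), fix $x_0 \in \M$ and parameterize nearby points as $x = \Exp_{x_0}(w)$ with $w \in T_{x_0}\M$. Since the Gaussian kernel is $C^\infty$, I differentiate under the integral and identify $\nabla^k_\M \T_\e[f](x_0)$ with the Euclidean derivative $\nabla^k_w$ at $w=0$ of
\begin{equation*}
g(w) = \int_{T_{x_0}\M} K_\e\big(\|\Exp_{x_0}(v) - \Exp_{x_0}(w)\|_{\R^D}\big)\,(f\rho)(\Exp_{x_0}(v))\,|\Jac(v)|\,dv,
\end{equation*}
modulo an exponentially small truncation from restricting $v$ to a ball of radius below the injectivity radius. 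Using Proposition \ref{smoothing:aux:expansion} and the standard expansions $\Exp_x(v) = x + v + \tfrac12 \sff_x(v,v) + O(\|v\|^3)$ and $|\Jac(v)| = 1 - \tfrac16 \Ric_x(v,v) + O(\|v\|^3)$, one rewrites $g(w)$ as a Euclidean Gaussian convolution $(K_\e * h)(w)$, with $h(v) = (f\rho)(\Exp_{x_0}(v))$, plus geometric correction terms that carry extra factors of order $\e^2$ after integration. Gaussian convolution commutes with differentiation and $(K_\e * \varphi)(w) - \varphi(w) = O(\e^2)$ for smooth $\varphi$, so $\nabla^k g(0) = \nabla^k h(0) + O(\e^2) = \nabla^k_\M(\rho f)(x_0) + O(\e^2)$; for $k=1$ the last identity is immediate, and for $k=2$ the $\M$-dependent Ricci contribution from $\nabla^2 h(0)$ must be shown to cancel against the analogous correction from the geometric expansions. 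Uniformity over $x_0 \in \M$ then follows from compactness and $\M$-dependence, and the smoothness $f \in C^{2+k}$ is exactly what the Gaussian-convolution remainder requires when applied to $\nabla^k h$.

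For (b), for each $x \in \M$ write $\nabla^k_\M(\T_{n,\e}[f] - \T_\e[f])(x) = \tfrac{1}{n}\sum_{i=1}^n\bigl(\psi^k_x(X_i) - \E\psi^k_x(X_i)\bigr)$, where $\psi^k_x(u)$ denotes the $k$-th covariant derivative in $x$ of $K_\e(\|u-x\|_{\R^D})f(u)$. The ambient gradient of $K_\e(\|u-x\|)$ equals $K_\e(\|u-x\|)(u-x)/\e^2$ projected onto $T_x\M$, with extra second-fundamental-form terms for $k=2$; a normal-coordinate computation centered at $x$ gives $\|\psi^k_x\|_\infty = O(\e^{-d-k})$ and $\var(\psi^k_x(X)) = O(\e^{-d-2k})$. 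An $\eta$-cover of $\M$ has $O(\eta^{-d})$ centers, and $x \mapsto \psi^k_x(u)$ is Lipschitz in $x$ with constant $O(\e^{-d-k-1})$; taking $\eta$ polynomially small in $1/n$ bridges the sup over the net to the sup over $\M$ at negligible cost. Applying Bernstein's inequality pointwise on the net with a union bound then delivers the stated rate $\sqrt{\log(1/(\e\wedge\delta))/(n\e^{d+2k})} + \log(1/(\e\wedge\delta))/(n\e^{d+2k})$, with the Bernstein second-order term being in fact $O(\e^{-d-k}\log(1/(\e\wedge\delta))/n)$, dominated by the stated $O(\e^{-d-2k}\log(1/(\e\wedge\delta))/n)$ for small $\e$.

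The main obstacle is the Hessian case $k=2$. Writing $\nabla^2_\M K_\e(\|u-x\|_{\R^D})$ explicitly produces several terms of orders $\e^{-2}$ and $\e^{-4}$ individually, and the $O(\e^2)$ bias emerges only after subtle cancellations involving the second fundamental form of $\M \subset \R^D$, the Ricci curvature, and the Jacobian of the exponential map. Carefully tracking these geometric correction terms---each bounded by an $\M$-dependent constant through the uniform bounds on curvature and on $\sff$ and their covariant derivatives---constitutes the bulk of the technical work in both the bias step and the variance computation underlying the concentration bound.
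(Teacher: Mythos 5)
Your overall architecture matches the paper's: normal-coordinate Taylor expansion of the exponential map and volume form for the bias (a), and a uniform concentration bound over a function class indexed by $x\in\M$ for the stochastic term (b). For (b), your direct $\eta$-net $+$ Bernstein $+$ union bound is equivalent in substance to the paper's route through VC-type classes and Proposition~\ref{smoothing:aux:VC}; your observation that the sup bound $\|\psi^k_x\|_\infty=O(\e^{-d-k})$ (exploiting Gaussian decay) yields a Bernstein second-order term dominated by the stated $\log/(n\e^{d+2k})$ is fine and is actually slightly sharper than what the paper writes down.

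For part (a), however, there is a real gap at $k=2$. Your argument rests on the claim that, after replacing the kernel and Jacobian by their flat values, the remaining geometric correction terms ``carry extra factors of order $\e^2$ after integration.'' That is true at the level of the function $g(w)$ and also after one $w$-derivative (which is why $k=1$ goes through), but it is false for the Hessian: after two $w$-derivatives the contributions from the $\sff$-term in $\|\Exp_{x_0}(v)-\Exp_{x_0}(w)\|^2-\|v-w\|^2$, from $J(x)^\top\alpha_{x,3}$, from the Ricci term in $G_x$, and from $\nabla^2_{\R^d}[\Exp_x]_\ell(0)$ (i.e.\ the curvature of the chart itself, which your ``Gaussian convolution commutes with differentiation'' framing does not see) are all $O(1)$, not $O(\e^2)$. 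The paper computes them explicitly as $\cA_3,\ldots,\cA_7$ and shows they cancel \emph{exactly}, with the crucial ingredient being the Gauss equation $\mathrm{Rm}_{x,ijk\ell}=\sff_x(e_i,e_\ell)^\top\sff_x(e_j,e_k)-\sff_x(e_i,e_k)^\top\sff_x(e_j,e_\ell)$ rewriting $\Ric_x$ in terms of the $b_{ij}=\sff_x(e_i,e_j)$. Your proposal flags that ``something must cancel'' but misplaces the cancellation (there is no Ricci contribution in $\nabla^2 h(0)$; $h=\widetilde{(\rho f)}_{x_0}$, so $\nabla^2 h(0)$ is simply $\nabla^2_\M(\rho f)(x_0)$) and does not supply the mechanism. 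Without invoking the Gauss equation, or an equivalent identity, the $O(\e^2)$ rate for $k=2$ is not established, so this step needs to be carried out rather than asserted.
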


\begin{theorem} \label{deriv:nor}
	For $k = 1,2$, we have:
	\begin{enumerate}[label=(\alph*)]
		\item 
		For $f \in C^{2+k}(\M)$,
		there exist $\e_0, C_0 >0$ such that if $\e < \e_0$, we have 
		\begin{equation}
			\sup_{x \in \M} \left\| \nabla^k_{\M} \TT_{\e} [f] (x) - \nabla^k_{\M} f(x) \right\|
			\leq C_0 \e^{2}
		\end{equation}
		where $\e_0$ and $C_0$ are $\M$-dependent, and also depend on $\rho_\tmin$ and the $C^{2+k}$-norms of $f$ and $\rho$.
		\item
		For $f \in C^{2+k}(\M)$,
		there exist $\e_0, C_0, C_0^* >0$ such that if $\e_{n,\delta}^{*} (C_0^*) < \e < \e_0^*$,
		%		$$ C_0^* \left(\frac{\log n \vee \log(1/\delta)}{n}\right)^{1/d} < \e < \e_0,$$
		we have with probability at least $1-\delta$, 
		\begin{equation}
			\sup_{x \in \M} \big\| \nabla^k_{\M} \TT_{n,\e} [f] (x) - \nabla^k_{\M} \TT_{\e} [f] (x) \big\|
			\leq C_0 \left( \sqrt{ \frac{\log(1/(\e\wedge \delta))}{n\e^{d +2(k-1)}} }  
			+ \frac{\log(1/(\e\wedge\delta))}{n \e^{d+2k}}\right)
		\end{equation}
		where $\e_0$, $C_0$ and $C^*_0$ are $\M$-dependent, and also depend on $\rho_\tmin$ and the $C^{2+k}$-norms of $f$ and $\rho$.
	\end{enumerate}
\end{theorem}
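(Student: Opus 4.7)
The plan is to reduce both parts to the quotient rule $\TT_\e[f] = \T_\e[f]/\T_\e[1]$ (and its empirical counterpart), combined with the already-established results on the unnormalized operator. For part~(a), the first- and second-order quotient rules for covariant derivatives express $\nabla_\M^k \TT_\e[f]$ as a finite algebraic combination of $\nabla_\M^j \T_\e[f]$ and $\nabla_\M^j \T_\e[1]$ for $j \leq k$, divided by powers of $\T_\e[1]$. By Theorem~\ref{unif:bias}, $\T_\e[1] = \rho + O(\e^2)$ uniformly on $\M$, so for $\e < \e_0$ small enough the denominator is bounded below by $\rho_\tmin/2 > 0$. Combining this with Theorem~\ref{unif:bias} applied to $f$ and Theorem~\ref{deriv:unnor}(a) applied to both $f$ and the constant $1$ (both yielding $O(\e^2)$ approximations of $\nabla_\M^j(\rho f)$ and $\nabla_\M^j \rho$), a direct algebraic computation shows that the leading term is exactly $\nabla_\M^k f$: for $k=1$, $\nabla_\M(\rho f)/\rho - (\rho f)\nabla_\M \rho/\rho^2 = \nabla_\M f$, and for $k=2$ a longer Leibniz expansion produces the same cancellation. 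The residual is $O(\e^2)$, proving part~(a).

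For part~(b), the crucial structural property is the shift-invariance of the normalized operator: for any $c \in \R$, $\TT_{n,\e}[f+c] = \TT_{n,\e}[f] + c$, hence $\nabla_\M^k \TT_{n,\e}[f+c] = \nabla_\M^k \TT_{n,\e}[f]$ for $k \geq 1$ (and analogously for $\TT_\e$). Fixing $x_0 \in \M$ and setting $\tilde f_{x_0} := f - f(x_0)$, one may therefore replace $f$ by $\tilde f_{x_0}$ when evaluating the $k$th-derivative difference at $x_0$. Expanding via the quotient rule at $x_0$, the quantity $\nabla_\M^k \TT_{n,\e}[\tilde f_{x_0}](x_0) - \nabla_\M^k \TT_\e[\tilde f_{x_0}](x_0)$ decomposes into a finite sum of products of (i) stochastic differences $\nabla_\M^j \T_{n,\e}[\tilde f_{x_0}](x_0) - \nabla_\M^j \T_\e[\tilde f_{x_0}](x_0)$ and $\nabla_\M^j \T_{n,\e}[1](x_0) - \nabla_\M^j \T_\e[1](x_0)$ for $j \leq k$, and (ii) deterministic factors such as $\T_\e[\tilde f_{x_0}](x_0) = O(\e^2)$ or $\nabla_\M^j \T_\e[\tilde f_{x_0}](x_0) = O(1)$, divided by powers of $\T_{n,\e}[1](x_0)$ and $\T_\e[1](x_0)$. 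Under $\e_{n,\delta}^*(C_0^*) < \e < \e_0$, Theorem~\ref{unif:sto} applied to $\T_{n,\e}[1]$ gives $\inf_{x_0 \in \M} \T_{n,\e}[1](x_0) \geq \rho_\tmin/2$ with probability at least $1-\delta/2$, controlling all the denominators.

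The heart of the argument is a refined uniform empirical-process bound of the form
\begin{equation*}
	\sup_{x_0 \in \M} \bigl\|\nabla_\M^j \T_{n,\e}[\tilde f_{x_0}](x_0) - \nabla_\M^j \T_\e[\tilde f_{x_0}](x_0)\bigr\|
	\leq C_0 \left( \sqrt{\frac{\log(1/(\e \wedge \delta))}{n\e^{d+2(j-1)}}} + \frac{\log(1/(\e \wedge \delta))}{n\e^{d+2j}} \right),
\end{equation*}
holding with probability at least $1-\delta/2$, for each $j \leq k$. The improvement by a factor $\e$ in the variance term (relative to the direct bound from Theorem~\ref{deriv:unnor}(b)) arises because on the effective support of the Gaussian kernel (a ball of radius $O(\e)$ around $x_0$), $|\tilde f_{x_0}(X_i)| = |f(X_i) - f(x_0)| = O(\|X_i - x_0\|) = O(\e)$ by the $C^1$-smoothness of $f$; hence the $\LP$-norm of the $x_0$-indexed class $\{y \mapsto (\nabla_x^j K_\e(\|y - x\|))|_{x=x_0}(f(y) - f(x_0)) : x_0 \in \M\}$ shrinks by a factor $\e$. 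Plugging this estimate into the quotient-rule expansion, together with Theorem~\ref{deriv:unnor}(b) applied to $\nabla_\M^j \T_{n,\e}[1] - \nabla_\M^j \T_\e[1]$ (weighted by the small factor $\T_\e[\tilde f_{x_0}](x_0) = O(\e^2)$ from item~(ii)), yields the stated rate. The main obstacle is establishing the refined uniform empirical-process bound above, since the index class depends on the evaluation point $x_0$, so Theorem~\ref{deriv:unnor}(b) cannot be cited directly; a Talagrand- or Bernstein-type concentration inequality combined with covering-number estimates for $\M$, analogous to the techniques underlying the proofs of Theorems~\ref{unif:sto} and \ref{deriv:unnor}(b) but applied to the $\e$-shrunk envelope and variance, is required. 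The remaining steps amount to routine bookkeeping of the terms produced by the quotient-rule expansion.
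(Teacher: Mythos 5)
Your proposal is correct and follows essentially the same path as the paper. For part (a) the paper likewise applies the first- and second-order quotient rules to $\TT_\e[f]=\T_\e[f]/\T_\e[1]$, invokes Theorem~\ref{unif:bias} for the order-zero factors and Theorem~\ref{deriv:unnor}(a) for the order-$j$ factors ($j\le k$), and verifies the Leibniz cancellation leaving $\nabla_\M^k f + O(\e^2)$. For part (b) the paper's decomposition is exactly your shift-by-$f(x_0)$ device, made explicit through the auxiliary quantities $\rS^{(j)}_{n,\e}(x)$ (smoothing $K\cdot[(u-x)/\e]^{\otimes j}\cdot[f(u)-f(x)]/\e$) and $\rH^{(j)}_{n,\e}(x)$ (smoothing $K\cdot[(u-x)/\e]^{\otimes j}$ alone); the localized term $[f(u)-f(x)]/\e$ is precisely your $\tilde f_{x_0}$-argument, and the paper controls the resulting $x_0$-indexed classes via the same VC/covering-number machinery (Proposition~\ref{smoothing:aux:VC}, Lemma~\ref{smoothing:aux:Mcovering}) with the $\e$-shrunk variance from Lemma~\ref{unif:sto:var}, together with the lower bound on $\T_{n,\e}[1]$ from Lemma~\ref{unif:sto:denom} under $\e^*_{n,\delta}(C_0^*)<\e$. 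Your "refined uniform empirical-process bound" is the lemma-level content of the paper's argument, and your identification that Theorem~\ref{deriv:unnor}(b) cannot be cited directly because the index class depends on the evaluation point is exactly the reason the paper introduces the $\rS$, $\rH$ quantities. Minor quibble: the sentence "$|\tilde f_{x_0}(X_i)|=O(\e)$ on the effective support" should be read as a variance heuristic (the envelope is still $O(1)$ or $O(1/\e^{j})$, and indeed the paper uses the crude envelope bound, which is what produces the $\log/(n\e^{d+2k})$ Bernstein term); the real gain comes solely from the $O(\e^d)$ variance. With that understood, the proposal matches the paper's proof in structure and content.
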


% ========================
% ===== Applications =====
% ========================
\section{Applications}
\label{smoothing:application}

% ===== KDE =====
\subsection{Kernel density estimation on manifolds}

Let the kernel density estimator with kernel $K_\e$ be 
\begin{align*}
	\rho_{n, \e} (x) &= \frac{1}{n} \sum_{i=1}^{n} K_\e (\|X_i - x\|_{\R^D}), \;\;x \in \M.
\end{align*}
We immediately see that $\rho_{n,\e}(x) = \T_{n,\e}[1](x),$ i.e., the KDE equals the unnormalized kernel smoothing of the function $f \equiv 1.$ Thus, $\T_\e[1](x) = \E \left[ \rho_{n, \e} (x) \right] = \int_\M K_\e ( \|y - x\|_{\R^D} ) d \P(y).$

So, our above results on uniform finite sample bounds for kernel smoothing and its derivatives, as well as the Berry-Esseen bounds immediately apply to the KDE.

% ===== Kernel Regression =====
\subsection{Kernel regression on manifolds} \label{reg}

In this part, we investigate Nadaraya-Watson type kernel regression when the explanatory variable lies on a manifold.
For simplicity, we consider the commonly used setting.

\begin{assumption} \label{reg:assume:density2}
	$\P_{X, Y}$ is a probability measure on $\M \times \R$ with marginal distribution $\P_X$ admitting a smooth density $\rho_X$ with respect to the volume measure on $\M$.
	The minimum of $\rho_X$, denoted by $\rho_{X,\tmin}$, is strictly positive.
	The pairs $(X_1,Y_1), \dots, (X_n, Y_n)$ are i.i.d. samples from $\P_{X,Y}$.
	$|Y_i| \leq C_Y$ almost surely for some $C_Y >0$.
\end{assumption}

The goal of the kernel regression is to estimate $\varphi(x) := \E(Y|X=x)$.
The Nadaraya-Watson estimator and its population version are:
\begin{align}
	\varphi_{n,\e} (x) &= \frac{ \sum_{i=1}^{n} Y_i K_\e \big( \|X_i -x\|_{\R^D} \big)}{\sum_{i=1}^{n} K_\e \big( \|X_i -x\|_{\R^D} \big)}, \\
	\varphi_{\e} (x) &= \frac{\int_{\M \times \R} y K_\e \big\|u -x\|_{\R^D} \big) d\P_{X,Y} (u,y) }{ \int_{\M \times \R} K_\e \big\|u -x\|_{\R^D}\big) d\P_{X,Y} (u,y) }.
\end{align}

Note that we do not impose any specific form of $\varphi (x)$.
For instance, one may have $\varphi (x) =  g(x)$ for some function $g$ when $Y= g(X) + \xi$ with the error term $\xi$ satisfying $\E[\xi|X]=0$.

Applying similar arguments as above, the following results can be obtained. 
As in Section~\ref{deriv}, we use $\nabla_\M^k$ with $k=0,1,2$, where $\nabla_\M^0f = f$, $\nabla_\M^1f$ is the gradient, and $\nabla_\M^2f$ the Hessian of $f$. For $k=0$, $\|\cdot\|$ means $|\cdot|$.

\begin{corollary} \label{reg:unif}
	Under Assumption \ref{reg:assume:density2},
	for each $k=0,1,2$, the following hold:
	\begin{enumerate}[label=(\alph*)]
		\item There exist $\e_0, C_0 >0$ such that if $\e < \e_0$ we have
		\begin{equation}
			\sup_{x \in \M} \left\| \nabla_\M^k \varphi_{\e} (x) - \nabla_\M^k \varphi(x) \right\| \leq C_0 \e^2.
		\end{equation}
		The constants $\e_0$ and $C_0$ are $\M$-dependent, and also depend on $\rho_{X,\tmin}$, and the $C^{2+k}$-norms of $\varphi$ and $\rho_X$.
		\item There exist $\e_0,C_0,C_0^*>0$ such that if $\e_{n,\delta}^{*} (C_0^*)<\e<\e_0$,
		then we have with probability at least $1-\delta$,
		\begin{equation}
			\sup_{x \in \M} \left\| \nabla_\M^k \varphi_{n,\e} (x) - \nabla_\M^k \varphi_{\e} (x) \right\|
			\leq C_0 \bigg( \sqrt{ \frac{\log(1/(\e\wedge\delta))}{n\e^{d+k}} }  
			+ \frac{\log(1/(\e\wedge\delta))}{n \e^{d+k}} \bigg).
		\end{equation}
		The constants $\e_0$ and $C_0$ are $\M$-dependent, and depend on $C_Y$, $\rho_{X,\tmin}$, and the $C^{2+k}$-norms of $\varphi$ and $\rho_X$.
	\end{enumerate}
\end{corollary}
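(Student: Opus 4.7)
The plan is to reduce Corollary~\ref{reg:unif} to the results already established in Sections~\ref{unif}--\ref{deriv}, with one extra ingredient to handle the conditional $Y$-noise. By Fubini's theorem applied to the joint integral against $\P_{X,Y}$,
\begin{equation*}
\varphi_{\e}(x) \;=\; \frac{\int_{\M} K_\e(\|u-x\|_{\R^D})\,\varphi(u)\,\rho_X(u)\,d\mathrm{vol}_{\M}(u)}{\int_{\M} K_\e(\|u-x\|_{\R^D})\,\rho_X(u)\,d\mathrm{vol}_{\M}(u)} \;=\; \TT_\e[\varphi](x),
\end{equation*}
where $\TT_\e$ is the normalized kernel smoothing operator of Section~\ref{unif} with base density $\rho_X$. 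Part~(a) follows at once: the $k=0$ case is (\ref{unif:bias-res2}) of Theorem~\ref{unif:bias} with $f=\varphi$ and density $\rho_X$, and the $k=1,2$ cases are Theorem~\ref{deriv:nor}(a) under the same substitution; the smoothness hypothesis $\varphi \in C^{2+k}$ matches those requirements exactly.

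For part~(b), I decompose
\begin{equation*}
\varphi_{n,\e}(x) - \varphi_\e(x) \;=\; \bigl(\TT_{n,\e}[\varphi](x) - \TT_\e[\varphi](x)\bigr) + R_{n,\e}(x),
\end{equation*}
with $R_{n,\e}(x) = Z_{n,\e}(x)/V_{n,\e}(x)$, $V_{n,\e}(x) = \T_{n,\e}[1](x)$, and
\begin{equation*}
Z_{n,\e}(x) \;=\; \frac{1}{n}\sum_{i=1}^n K_\e(\|X_i - x\|_{\R^D})\,\bigl(Y_i - \varphi(X_i)\bigr).
\end{equation*}
Conditioning on the $X_i$ gives $\E Z_{n,\e}(x) = 0$ because $\E[Y_i - \varphi(X_i) \mid X_i] = 0$. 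The first summand, together with its $\nabla_\M^k$-derivative, is controlled directly by Theorem~\ref{unif:sto} (for $k=0$) and Theorem~\ref{deriv:nor}(b) (for $k=1,2$), applied with $f=\varphi$. For the remainder $R_{n,\e}$, I invoke the standard denominator-control event: $V_{n,\e}(x) \ge \rho_{X,\min}/2$ uniformly in $x$ with probability at least $1-\delta/2$ once $\e > \e_{n,\delta}^{*}(C_0^*)$, the same event used in the proof of Theorem~\ref{unif:sto}. The Leibniz-type expansion
\begin{equation*}
\nabla_\M^k(U/V) \;=\; \sum_{j=0}^{k}\binom{k}{j}\,\nabla_\M^{\,j}U \cdot \nabla_\M^{\,k-j}(1/V)
\end{equation*}
reduces everything to uniform bounds on $\nabla_\M^{\,j} Z_{n,\e}$ and $\nabla_\M^{\,j} V_{n,\e}-\nabla_\M^{\,j} V_\e$, each of which is an empirical process indexed by a function class of the form
\begin{equation*}
\mathcal{F}_\e^{(j)} \;=\; \bigl\{(u,y)\mapsto \nabla_x^{\,j} K_\e(\|u-x\|_{\R^D})\,(y-\varphi(u)) : x \in \M\bigr\}
\end{equation*}
(and analogously without the $(y-\varphi(u))$ factor for $V$). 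Each such class has envelope of order $C_Y\,\e^{-d-j}$, $\P_{X,Y}$-variance of order $\e^{-d-2j}$, and polynomial-in-$\e^{-1}$ Lipschitz modulus in $x$; a grid-plus-chaining argument combined with Bernstein's inequality, exactly as in the proofs of Theorems~\ref{unif:sto} and \ref{deriv:unnor}(b), yields the stated sup-norm bound.

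The main technical obstacle is the joint uniform control of numerator and denominator fluctuations for $k=1,2$: one has to verify that the Lipschitz-in-$x$ modulus of $\mathcal{F}_\e^{(j)}$ is at most polynomial in $\e^{-1}$, so that a grid on $\M$ of polynomial cardinality suffices for the union bound, and then propagate these bounds through the Leibniz expansion while tracking the controlled lower bound on $V_{n,\e}$. A secondary but conceptually important point is that the shift-invariance trick which produces the faster rate for $\TT_{n,\e}[\varphi]-\TT_\e[\varphi]$ does not apply to $R_{n,\e}$, because the residual $Y - \varphi(X)$ has strictly positive conditional variance in general; thus the $Y$-noise piece dictates the overall rate in the regression setting once the signal contribution from $\varphi$ has been separated.
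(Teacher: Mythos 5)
Your part~(a) coincides exactly with the paper's: the identity $\varphi_\e = \TT_\e[\varphi]$ via Fubini, followed by Theorem~\ref{unif:bias} for $k=0$ and Theorem~\ref{deriv:nor}(a) for $k=1,2$.

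For part~(b) you take a genuinely different route. The paper's proof writes
\begin{equation*}
\varphi_{n,\e}(x) - \varphi_\e(x) = \frac{1}{\rho_{X,n,\e}(x)}\bigl(\cA_1 - \varphi_\e(x)\,\cA_2\bigr),
\end{equation*}
where $\cA_1 = \frac{1}{n}\sum_i Y_i K_\e(\|X_i-x\|) - \E[Y K_\e(\|X-x\|)]$ and $\cA_2 = \rho_{X,n,\e}-\rho_{X,\e}$; it then reduces control of $\cA_1$ to a VC-type bound for the class $\{(u,y)\mapsto y\,\mathds{1}(|y|\le C_Y)\,K(\|u-x\|/\e): x\in\M\}$, leaving the remaining pieces to (a), Theorem~\ref{unif:sto}, and Lemma~\ref{unif:sto:denom}. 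You instead split $\varphi_{n,\e}-\varphi_\e = (\TT_{n,\e}[\varphi] - \TT_\e[\varphi]) + Z_{n,\e}/V_{n,\e}$, with $Z_{n,\e}$ built from the centered residuals $Y_i - \varphi(X_i)$. Both decompositions are algebraically correct and both invoke the same denominator-control event and VC/empirical-process machinery, so the two proofs are of comparable difficulty; your split is more transparent about the mechanism, isolating the signal part (which inherits the improved normalized rate from Theorem~\ref{unif:sto}) from the noise part (which does not, as you correctly note the shift-invariance trick cannot apply to the residuals). The paper's version keeps the two effects entangled inside $\cA_1 - \varphi_\e\cA_2$, which is slightly shorter to write but obscures why normalization does not speed up the regression rate. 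Two small caveats on your write-up: the displayed ``Leibniz-type'' formula $\nabla_\M^k(U/V) = \sum_j \binom{k}{j}\nabla_\M^j U \cdot \nabla_\M^{k-j}(1/V)$ does not literally hold as a tensor identity for $k=2$ (the paper's own Hessian proofs expand the quotient rule term by term instead), and you should justify that the classes $\mathcal{F}_\e^{(j)}$ with the extra $(y-\varphi(u))$ factor remain uniformly bounded VC-type classes with the same characteristics as the paper's $y$-truncated class — this is routine since $\varphi$ is fixed and bounded, but worth a sentence.

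One further observation applies equally to both proofs: tracking the envelope and variance of the noise piece $\nabla_\M^k Z_{n,\e}$ through the VC bound (as in the proof of Theorem~\ref{deriv:unnor}(b)) produces fluctuations of order $\sqrt{\log/(n\e^{d+2k})}$, not $\sqrt{\log/(n\e^{d+k})}$ as stated in the corollary. The paper's own proof for $k\ge1$ is only a one-line pointer to Theorem~\ref{deriv:unnor}, which also gives the exponent $d+2k$, so neither argument as written delivers the displayed exponent $d+k$; you should either confirm the target rate with the authors or carry the $d+2k$ exponent through your write-up.
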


\begin{corollary} \label{reg:Berry}	
	Let Assumption \ref{reg:assume:density2} hold and let 
	$x_1, \dots, x_m \in \M$ be distinct with $\var(Y|X=x_j) \ne 0, j=1,\dots,m$. Let
	\begin{align*}
		\bfZ_{n,\e,\varphi} &= \sqrt{n \e^d} \left(  \varphi_{n,\e} (x_j) - \varphi_{\e} (x_j) \right)_{j=1}^m, \\
		\tbfZ_{n,\e,\varphi} &= \sqrt{n \e^d} \left(  \varphi_{n,\e} (x_j) - \varphi (x_j) \right)_{j=1}^m, \\
		\Sigma_{\varphi} &=  \diag \left( \frac{\var(Y|X=x_j)}{ (4\pi)^{d/2} \rho_X (x_j)} \right)_{j=1}^m.
	\end{align*}
	Then, there exist $\e_0,C_0$ - $C_3 > 0$ such that if $\e < \e_0$ and $n\e^d > C_3$, we have
	\begin{align}
		\distcvx(\bfZ_{n,\e,\varphi}, \cN_{\Sigma_\varphi}) &\leq \frac{C_1 \log(n\e^d)}{\sqrt{n\e^d}} + C_2 \e, \\
		\distcvx(\tbfZ_{n,\e,\varphi}, \cN_{\Sigma_\varphi}) &\leq \frac{C_1 \log(n\e^d)}{\sqrt{n\e^d}} + C_2 \e +C_0 \sqrt{n\e^{d+4}}.
	\end{align}
	The constants $\e_0,C_0$ - $C_3$ are $\M$-dependent, and they also depend on $\rho_{X, \tmin}$, the $C^2$-norms of $\E[Y^2\,|\,X=\cdot]$, $\varphi, \rho_X, m$, and the values $\var(Y|X=x_j)$, $\rho_X(x_j), j = 1,\ldots,m$.
\end{corollary}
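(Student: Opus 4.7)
The plan is to adapt the strategy of the proof of Theorem~\ref{Berry:nor}, replacing the deterministic function $f(X_i)$ by the random $Y_i$ and tracking the non-vanishing conditional variance $\var(Y\mid X)$, which is why the scaling factor is $\sqrt{n\e^d}$ rather than $\sqrt{n\e^{d-2}}$. Set $D_n(x) := \T_{n,\e}[1](x)$ and $D_\e(x) := \T_\e[1](x)$. The identity $\varphi_\e(x) D_\e(x) = \E\big[Y K_\e(\|X-x\|_{\R^D})\big]$ gives the exact linearization
\[
\varphi_{n,\e}(x) - \varphi_\e(x) = \frac{1}{D_n(x)}\cdot \frac{1}{n}\sum_{i=1}^n W_i(x), \qquad W_i(x) := \big(Y_i - \varphi_\e(x)\big)\,K_\e\big(\|X_i - x\|_{\R^D}\big),
\]
so that the vectors $(W_i(x_j))_{j=1}^m$ are i.i.d.\ with mean zero, and each coordinate is uniformly bounded by $2C_Y(2\pi\e^2)^{-d/2}$.

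Next, I would identify the asymptotic covariance. Conditioning on $X$ gives $\E[(Y-\varphi_\e(x))^2\mid X] = \var(Y\mid X) + (\varphi(X) - \varphi_\e(x))^2$. Combining this with the standard $K_\e^2$-expansion used in the proof of Theorem~\ref{Berry:unnor} and with $\varphi_\e(x) - \varphi(x) = O(\e^2)$ from Corollary~\ref{reg:unif}(a), one obtains
\[
\e^d\,\var(W_i(x)) = \frac{\var(Y\mid X=x)\,\rho_X(x)}{(4\pi)^{d/2}} + O(\e^2),
\]
while the off-diagonal terms $\cov(W_i(x_j), W_i(x_k))$ for $j\neq k$ are exponentially small in $\e$, since the Gaussian kernels centered at distinct $x_j, x_k$ have effective supports that become disjoint for small $\e$. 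Dividing by $D_\e(x_j)^2\to \rho_X(x_j)^2$ recovers $\Sigma_\varphi$ up to an $O(\e)$ relative error. A multivariate Berry-Esseen inequality (Bentkus-type) applied to the i.i.d.\ sum $\big(\sqrt{n\e^d}\,(n D_\e(x_j))^{-1}\sum_i W_i(x_j)\big)_{j=1}^m$ then yields a leading error of order $1/\sqrt{n\e^d}$, since $\E|W_i|^3 \lesssim \|K_\e\|_\infty \var(W_i) \lesssim \e^{-3d/2}$.

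Finally, to pass from $D_\e(x_j)$ back to the random $D_n(x_j)$ in the linearization, I would invoke Theorem~\ref{unif:sto} applied with $f\equiv 1$: this yields $\max_j |D_n(x_j)-D_\e(x_j)| = \OP\big(\sqrt{\log(n\e^d)/(n\e^d)}\big)$, and together with $n\e^d > C_3$ guarantees $D_n(x_j)\ge \rho_{X,\tmin}/2$ with high probability, producing the $C_1\log(n\e^d)/\sqrt{n\e^d}$ summand. A standard Lipschitz-in-covariance estimate for $\distcvx$ absorbs the $O(\e)$ covariance mismatch into the $C_2\e$ term. For $\tbfZ_{n,\e,\varphi}$ one additionally accounts for the bias via Corollary~\ref{reg:unif}(a) with $k=0$, which contributes the extra $C_0\sqrt{n\e^{d+4}} = \sqrt{n\e^d}\cdot O(\e^2)$. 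The main technical obstacle is the careful propagation of the random-denominator perturbation through the $\distcvx$ bound uniformly over the $m$ base points while preserving the sharp logarithmic rate; this is precisely the same issue faced in the proof of Theorem~\ref{Berry:nor}, and the same techniques carry over once one replaces the deterministic increments $f(X_i) - f(x)$ by the random residuals $Y_i - \varphi_\e(x)$.
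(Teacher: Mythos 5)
Your proposal follows essentially the same route as the paper's proof: the exact linearization via the centered residuals $W_i(x) = (Y_i - \varphi_\e(x))\,K_\e(\|X_i - x\|)$ is algebraically identical to the paper's decomposition $Z_{ij} = \rho_{X,\e}(x_j)^{-1}(V_{ij} - \varphi_\e(x_j)\,U_{ij})$, and the remaining steps --- the $K_\e^2$ covariance expansion (Lemma~\ref{reg:Berry:var}), exponentially small off-diagonals (Lemma~\ref{Berry:unnor:aux1}), a Bentkus-type Berry--Esseen bound, the Panov comparison lemma for the $O(\e)$ covariance mismatch, the $R_n S_n$ treatment from the proof of Theorem~\ref{Berry:nor}, and the bias correction from Corollary~\ref{reg:unif}(a) --- all coincide with the paper's argument. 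One arithmetic slip: $\|K_\e\|_\infty \var(W_i) \lesssim \e^{-d}\cdot\e^{-d} = \e^{-2d}$, not $\e^{-3d/2}$; after rescaling the summands by $\sqrt{\e^d}/D_\e(x_j)$ this gives a third absolute moment of order $\e^{-d/2}$ and hence the leading error $O(1/\sqrt{n\e^d})$ you claim, whereas the exponent $-3d/2$ as written would (incorrectly) suggest a rate $O(1/\sqrt{n})$.
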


Note that even though $\varphi_{n,\e}$ has a form of normalized kernel smoothing, the convergence rate is not improved relative to usual normalized cases.
This is due to the presence of a possible error term $\xi$ in $Y$, 
which is not offset by the normalization through the denominator $\rho_X$.

% ===== Heat Kerenl Signature =====
\subsection{Estimation of heat kernel signature on manifolds} \label{HKS}

The heat kernel signature (HKS) is a multi-scale function defined on a Riemannian manifold encoding certain geometric information about the manifold \citep{sun2009concise}.
It has been used for shape classification, where the goal is to classify figures which may have different poses and some noise.
Theoretically, the HKS is obtained by considering the diagonal components of the corresponding heat kernel.
Alternatively, the HKS can be expressed by the eigenvalues and the eigenfunctions of the Laplace-Beltrami operator of the manifold.
Formally, for diffusion time $\tau>0$, the HKS is given by
\begin{equation*}
	H_\tau (x) = \sum_{j=0}^\infty e^{-\lambda_j \tau} \phi^2_j (x),
\end{equation*}
where $\lambda_j \ge 0$ denotes the $j$th eigenvalue, sorted in non-decreasing order, of the Laplace-Beltrami operator $\Delta_\M$, and $\phi_j$ is the corresponding $L^2$-orthonormal eigenfunction.

To estimate the HKS, \cite{dunson2021spectral} suggest a discrete version of an estimator of the heat kernel evaluated at the data points sampled from the manifold through the graph Laplacian, and showed that the estimator converges to the corresponding heat kernel in probability uniformly over the data points.
Therefore, a discrete version of the estimator of the HKS can be obtained by restricting the estimator of the heat kernel.
Here, we propose to smoothly extend this discrete version to the entire domain using kernel smoothing, and then to investigate the asymptotic behavior of this extended version.

To define the estimator, consider the $n \times n$ affinity matrix $W$ with a reweighted kernel \citep{coifman2006diffusion} 
\begin{equation*}
	W = (W_{ij})_{i,j=1,\dots,n} = \bigg( \frac{K_{\sqrt{2}\eta} \|X_i -X_j\|_{\R^D}}{q_\eta (X_i) q_\eta (X_j)} \bigg)_{i,j=1,\dots,n},
\end{equation*}
where $q_\eta (x) = \sum_{i=1}^{n} K_{\sqrt{2}\eta} (\|X_i-x\|_{\R^D})$. The kernel-normalized graph Laplacian matrix $\Delta_{n,\eta}$ then is 
\begin{equation*}
	\Delta_{n,\eta} = \frac{1}{\eta^2}\big(I_n-D_W^{-1} W\big)
\end{equation*}
with degree matrix $D_W = {\rm diag}(D_1,\ldots,D_n),$ where $D_{i} = \sum_{j=1}^n W_{ij}$, and $I_n$ denotes the $n \times n$ identity matrix. 
Note that due to the re-weighting this is different from Section~\ref{Berry:LB-operator}.
Also, we use a different bandwidth $\eta$; we explain this at the end of subsection.

Let $\mu_{i,n,\eta}$ be the $i$-th eigenvalue of $\Delta_{n,\eta}$ with ordering $0= \mu_{0,n,\eta} \leq \mu_{1,n,\eta} \leq \dots \leq \mu_{n,n,\eta}$
and the associated eigenvector $\tilde{\nu}_{i,n,\eta}$ normalized in the Euclidean norm. 
To match the eigenvectors with the eigenfunctions of the Laplace-Beltrami operator,
$\tilde{\nu}_{i,n,\eta}$ needs to be normalized as
$ \nu_{i,n,\eta} = \tilde{\nu}_{i,n,\eta} / ||\tilde{\nu}_{i,n,\eta}||_{w}, $
where
${||\tilde{\nu}_{i,n,\eta}||^2_{w} } =  \sum_{i=1}^n w_i \tilde{\nu}_{i,n,\eta}^2 (i), \quad w_i = \frac{|S^{d}| \eta^d}{d} \frac{1}{{\mathbb{N}(i)}}. $
Here, $|S^{d}|$ denotes the volume of the unit sphere in $d$-dimensional Euclidean space, and $\mathbb{N}(i) = |B_\eta^{\R^D}(x_i) \cap \set{x_1, \dots, x_n}|$.
Now, using $N$ eigenpairs of $\Delta_{n,\eta}$, given $\tau >0$, the estimator of the HKS evaluated at $X_i$, denoted by $\tilde{H}_{n,N,\eta,\tau} (X_i)$, is defined by the $i$th diagonal component of the matrix $\sum_{j=0}^{N-1} e^{-\tau \mu_{j,n,\eta}} \nu_{j,n,\eta} \nu_{j,n,\eta} ^\top$.

To state the uniform convergence of $\tilde{H}_{n,N,\eta,\tau}$, several assumptions are needed:

\begin{assumption} \label{HKS:assume}
	The parameters $n$, $\eta$, $N$ and $\tau$ satisfy the following conditions:
	\begin{enumerate}[label=(\alph*)]
		\item Assume all eigenvalues $\set{\lambda_j: j=0,1,\dots}$ of $\Delta_\M$ are simple.
		For some constants $\Omega_1$, $\Omega_2$ and $\Omega_3 >1$:
		\begin{equation*}
			\eta \leq \Omega_1 \min \Bigg( \Bigg( \frac{\Gamma_N \wedge 1}{\Omega_2 + \lambda_N^{\frac{d}{2}+5}} \Bigg)^2 , 
			\frac{1}{\Big(\Omega_3 + \lambda_N^{(5d+7)/4}\Big)^{2}},\;
			\frac{1}{N^4 \lambda_N^{\frac{d-1}{2}}} \Bigg),
		\end{equation*}
		where $\Gamma_N := \min_{1\leq j \leq N} \inf_{k \not= j} |\lambda_j - \lambda_k|$. 
		\item $n$ is large enough so that
		$ \big( \frac{\log n}{n}\big) ^{\frac{1}{4d+13}} \leq \eta.$
		\item $\tau$ is not too small so that 
		$\frac{C \log N }{N^{2/d}} \leq \tau,$
		where $C$ is $\M$-dependent.
	\end{enumerate}
\end{assumption}

Now, we state the uniform convergence of $\tilde{H}_{n,N,\eta,\tau}$ toward the true HKS $H_\tau$.
The following proposition is the modified version of Theorem 3 from \cite{dunson2021spectral} for the HKS on the data points. 
\begin{proposition} \label{HKS:Dunson}
	There exist $\M$-dependent constants $\Omega_1$, $\Omega_2$ and $\Omega_3 >1$ also depending on $\rho_\tmin$ and the $C^2$-norm of $\rho$, such that if Assumption~\ref{HKS:assume} holds, we have with probability at least $1-n^{-2}$,
	\begin{equation*}
		\sup_{i \in [n]} \left| \tilde{H}_{n,N,\eta,\tau} (X_i) - H_\tau (X_i) \right| \le \frac{\Omega_4}{N} (\tau+1) e^{-\Omega_5 \tau} + \Omega_6 \eta.
	\end{equation*}
	The constants $\Omega_4$-$\Omega_6$ are $\M$-dependent.
	Additionally, $\Omega_4$ and $\Omega_6$ also depend on $\rho_\tmin$ and the $C^2$-norm of $\rho$.
\end{proposition}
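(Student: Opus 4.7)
The plan is to apply a triangle inequality to separate a truncation error from a spectral estimation error:
\begin{align*}
\bigl| \tilde H_{n,N,\eta,\tau}(X_i) - H_\tau(X_i) \bigr|
&\le \Bigl| \sum_{j=0}^{N-1} \bigl( e^{-\tau \mu_{j,n,\eta}} \nu_{j,n,\eta}^2(i) - e^{-\tau \lambda_j} \phi_j^2(X_i) \bigr) \Bigr| \\
&\quad + \sum_{j=N}^{\infty} e^{-\tau \lambda_j} \phi_j^2(X_i),
\end{align*}
and then to bound the first (estimation) term by $\Omega_6 \eta$ and the second (truncation) term by $\frac{\Omega_4}{N}(\tau+1) e^{-\Omega_5 \tau}$. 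Combining these two pieces yields the stated bound, and a uniformity argument in $i$ gives the supremum.

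For the truncation tail, I would invoke H\"ormander's uniform eigenfunction bound $\|\phi_j\|_\infty \lesssim \lambda_j^{(d-1)/4}$ together with Weyl's law $\lambda_j \asymp j^{2/d}$ to majorize the sum by $\sum_{j \ge N} e^{-\tau \lambda_j} \lambda_j^{(d-1)/2}$. Splitting $e^{-\tau \lambda_j} = e^{-\tau \lambda_j/2} \cdot e^{-\tau \lambda_j/2}$ and extracting $e^{-\tau \lambda_N/2}$ from the first factor, Assumption~\ref{HKS:assume}(c), which gives $\tau \lambda_N \gtrsim \log N$, supplies the prefactor $N^{-1}$ (provided the $\M$-dependent constant in (c) is taken sufficiently large). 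The remaining sum is estimated via an integral comparison of the shape $\int_{N}^{\infty} e^{-c\tau t^{2/d}} t^{(d-1)/d} dt$, which after the change of variables $u = c\tau t^{2/d}$ produces a polynomial-in-$\tau$ prefactor and an overall exponential factor $e^{-\Omega_5 \tau}$, giving the $(\tau+1)e^{-\Omega_5\tau}$ shape.

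For the estimation error, I would rely on (the proof of) Theorem~3 of \cite{dunson2021spectral}, which under Assumption~\ref{HKS:assume}(a)-(b) yields, with probability at least $1 - n^{-2}$ and uniformly over $j \le N-1$ and $i \in [n]$, bounds of the form $|\mu_{j,n,\eta} - \lambda_j| \lesssim \eta$ and $|\nu_{j,n,\eta}(i) - \phi_j(X_i)| \lesssim \eta$ after appropriate sign alignment. Decomposing the termwise difference as
$$
e^{-\tau \mu_j} \nu_j^2(i) - e^{-\tau \lambda_j} \phi_j^2(X_i) = \bigl( e^{-\tau \mu_j} - e^{-\tau \lambda_j}\bigr) \phi_j^2(X_i) + e^{-\tau \mu_j}\bigl(\nu_j^2(i) - \phi_j^2(X_i)\bigr),
$$
applying the elementary Lipschitz estimate $|e^{-\tau \mu_j} - e^{-\tau \lambda_j}| \le \tau e^{-\tau(\mu_j \wedge \lambda_j)} |\mu_j - \lambda_j|$, and summing over $j = 0, \dots, N-1$ using H\"ormander's bound and Weyl's law produces a total of order $\eta$, with $N$- and $\tau$-dependences absorbed thanks to the coupling among $N$, $\tau$, and $\eta$ built into Assumption~\ref{HKS:assume}.

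The main obstacle is bookkeeping rather than any single new idea: since H\"ormander's bound allows $\phi_j^2$ to grow polynomially in $\lambda_j$, both the truncation sum and the estimation sum only close because of $e^{-\tau \lambda_j}$, so the cascading constants from the Davis--Kahan-type perturbation argument (which relies on the spectral-gap lower bound $\Gamma_N$ from Assumption~\ref{HKS:assume}(a)) must be tracked carefully against the eigenfunction and eigenvalue errors. Uniformity over $i \in [n]$ is obtained via a union bound, which explains the $n^{-2}$ failure probability. Once these two estimates are in place, the triangle inequality above completes the proof.
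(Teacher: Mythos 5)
The paper does not give its own proof of Proposition~\ref{HKS:Dunson}; it explicitly states that the result is ``the modified version of Theorem~3 from \cite{dunson2021spectral} for the HKS on the data points,'' and no argument appears in the appendix (the appendix proves only Corollaries~\ref{HKS:unif} and \ref{HKS:Berry}, which take Proposition~\ref{HKS:Dunson} as given). The ``modification'' is simply the restriction of Dunson et al.'s heat-kernel approximation result to the diagonal $x=y$, and Assumption~\ref{HKS:assume} is inherited more or less verbatim from that reference. So the correct route here is ``cite and restrict,'' not a from-scratch reconstruction.

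Your attempt tries to reprove the whole statement. The triangle-inequality split into truncation and estimation error is natural and is essentially what underlies Dunson et al.'s result, so the high-level skeleton is right. But two concrete concerns arise. First, your truncation estimate does not obviously produce the shape $\frac{\Omega_4}{N}(\tau+1)e^{-\Omega_5\tau}$: after the change of variables in $\int_N^\infty e^{-c\tau t^{2/d}}t^{(d-1)/d}\,dt$ you get a prefactor that is a \emph{negative} power of $\tau$ (roughly $(c\tau)^{-(2d-1)/2}$ times an incomplete Gamma), not the increasing factor $(\tau+1)$. That $(\tau+1)$ factor is part of the bound inherited from Dunson et al.'s Theorem~3 and is not an artifact of the tail sum alone; it also absorbs contributions from the eigenvalue Lipschitz term $\tau e^{-\tau(\mu_j\wedge\lambda_j)}|\mu_j-\lambda_j|$ that you correctly identify. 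If you insist on reproving the tail, you need to track both the $N^{(2d-3)/d}$ prefactor from the integral and the $\tau$-dependence against the exponential, which requires the constant in Assumption~\ref{HKS:assume}(c) to be calibrated to the dimension, and it will not trivially yield $(\tau+1)$. Second, the eigenpair bounds $|\mu_{j,n,\eta}-\lambda_j|\lesssim\eta$ and $|\nu_{j,n,\eta}(i)-\phi_j(X_i)|\lesssim\eta$ with failure probability $n^{-2}$ \emph{are} the content of Dunson et al.'s result; they already hold uniformly over $i\in[n]$ and $j\le N-1$, so an additional union bound over $i$ is not what produces the $n^{-2}$ --- you would be re-deriving the very theorem you want to invoke. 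In short, your plan is a plausible sketch of how Dunson et al.'s theorem was proved, but as a proof of Proposition~\ref{HKS:Dunson} in this paper it both does more work than needed and leaves the exact $(\tau+1)e^{-\Omega_5\tau}$ shape unjustified.
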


We introduce an extension of $\tilde{H}_{n,N,\eta,\tau}$ through kernel smoothing:
\begin{definition}
	For $\e >0$ and $x \in \M$, the smooth extension $\widehat{H}_{n,N,\eta,\tau,\e}$ of $\tilde{H}_{n,N,\eta,\tau}$ is defined by normalized kernel smoothing:
	\begin{align*}
		\widehat{H}_{n,N,\eta,\tau,\e} (x)
		%		&= \TT_{n,\e} \left[ \tilde{H}_{n,N,\eta,\tau} \right] (x)
		&= \frac{\sum_{i=1}^{n} K_\e \big( \|X_i - x \|_{\R^D}\big) \tilde{H}_{n,N,\eta,\tau} (X_i) }{\sum_{i=1}^{n} K_\e \big( \|X_i - x \|_{\R^D}\big)}.
	\end{align*}
\end{definition}
Applying the uniform convergence and asymptotic normality of kernel smoothing to $\widehat{H}_{n,N,\eta,\tau,\e}$, we obtain the following results.

\begin{corollary} \label{HKS:unif}
	Let Assumption~\ref{HKS:assume} hold. Then, for $k=0,1,2$, there exist $\e_0, C_0,C_0^*>0$ such that if $\e_{n,\delta}^{*} (C_0^*) < \e < \e_0$,
	then, with probability at least $1-\delta-\frac{1}{n^{2}}$,
	\begin{multline}
		\sup_{x \in \M} \left\| \nabla_\M^k \widehat{H}_{n,N,\eta,\tau,\e} (x) - \nabla_\M^k H_\tau (x) \right\|
		\leq C_0 \bigg( \frac{\Omega_4}{N \e^{2k}} (\tau+1) e^{-\Omega_5 \tau} + \Omega_6 \frac{\eta}{\e^{2k}} +\e^{2} 
		\\+ \sqrt{ \frac{\log(1/(\e\wedge\delta))}{n\e^{d+2(k-1)}} } + \frac{\log(1/(\e\wedge\delta))}{n \e^{d+2k}} \bigg).
	\end{multline}
	The constants $\e_0$, $C_0$ and $C^*_0$ are $\M$-dependent, and they also depend on $\rho_\tmin$ and the $C^{2+k}$-norms of $H_\tau$ and $\rho$.
\end{corollary}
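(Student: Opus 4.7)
The plan is to combine Proposition~\ref{HKS:Dunson} with Theorem~\ref{deriv:nor} (for $k=1,2$) and Corollary~\ref{unif:cons} (for $k=0$) via a three-way decomposition of the error. Exploiting that the numerator of $\TT_{n,\e}$ is linear in the values $\{f(X_i)\}_{i=1}^n$,
\[
\widehat{H}_{n,N,\eta,\tau,\e}(x) = \TT_{n,\e}[H_\tau](x) + \TT_{n,\e}[G_n](x),
\]
where $G_n(X_i) := \tilde H_{n,N,\eta,\tau}(X_i) - H_\tau(X_i)$ and $\TT_{n,\e}[G_n](x)$ is to be interpreted as normalized kernel smoothing of the values $\{G_n(X_i)\}_{i=1}^n$. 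Differentiating and subtracting $\nabla_\M^k H_\tau(x)$ produces
\begin{align*}
\nabla_\M^k \widehat{H}_{n,N,\eta,\tau,\e}(x) - \nabla_\M^k H_\tau(x)
&= \big[\nabla_\M^k \TT_{n,\e}[H_\tau](x) - \nabla_\M^k \TT_\e[H_\tau](x)\big]\\
&\quad + \big[\nabla_\M^k \TT_\e[H_\tau](x) - \nabla_\M^k H_\tau(x)\big]\\
&\quad + \nabla_\M^k \TT_{n,\e}[G_n](x).
\end{align*}

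The first bracket is the stochastic error of smoothing the smooth function $H_\tau$ (note that $H_\tau \in C^\infty(\M)$, as the series defining it converges in $C^\infty$ thanks to the exponentially decaying coefficients $e^{-\lambda_j\tau}$), and is bounded on an event of probability at least $1-\delta$ by Theorem~\ref{deriv:nor}(b) for $k=1,2$ (or Theorem~\ref{unif:sto} for $k=0$). The second bracket is the deterministic bias, bounded by $C_0\e^2$ uniformly via Theorem~\ref{deriv:nor}(a) (or Theorem~\ref{unif:bias} for $k=0$). Together these account for the $\e^2$, $\sqrt{\log(\cdot)/(n\e^{d+2(k-1)})}$, and $\log(\cdot)/(n\e^{d+2k})$ terms in the final bound.

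The remaining plug-in term $\nabla_\M^k\TT_{n,\e}[G_n](x)$ is where Proposition~\ref{HKS:Dunson} enters: on an event of probability at least $1 - n^{-2}$ it yields $\sup_i|G_n(X_i)|\le \beta := \Omega_4(\tau+1)e^{-\Omega_5\tau}/N + \Omega_6\eta$. Writing $\TT_{n,\e}[G_n](x) = N(x)/D(x)$ with $D(x) = \T_{n,\e}[1](x)$ and applying the quotient rule (and Leibniz for the Hessian when $k=2$), each derivative in $x$ of $K_\e(\|y-x\|_{\R^D})$ introduces a factor $(y-x)/\e^2$, uniformly bounded by $\diam(\M)/\e^2$, so iterating produces at worst an $\e^{-2k}$ amplification. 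Combined with $|G_n(X_i)|\le \beta$ and a uniform positive lower bound on $D(x)$, this yields $\sup_{x\in\M}\|\nabla_\M^k\TT_{n,\e}[G_n](x)\|\le C_0\beta/\e^{2k}$, which accounts for the $\Omega_4\e^{-2k}(\tau+1)e^{-\Omega_5\tau}/N$ and $\Omega_6\eta\e^{-2k}$ terms in the final bound.

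The main obstacle is the uniform lower bound $D(x)\gtrsim \rho_\tmin$ on $\M$: this is exactly what the bandwidth condition $\e_{n,\delta}^{*} (C_0^*) < \e$ guarantees with high probability, mirroring the denominator-control argument already used in the proof of Theorem~\ref{unif:sto}(b). A secondary technicality is translating Euclidean differentiation in $x$ (through which $K_\e(\|X_i-x\|_{\R^D})$ depends on $x$) into covariant derivatives on $\M$; this is handled routinely via composition with $\Exp_x$ as in the norm definitions of Section~\ref{smoothing:prelim}, and introduces at most an $\M$-dependent constant. Assembling the three bounds by triangle inequality and applying a union bound over the two high-probability events gives total failure probability at most $\delta + n^{-2}$ and the claimed estimate.
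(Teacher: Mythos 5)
Your proof is correct and takes essentially the same route as the paper's: the paper also splits $\widehat H_{n,N,\eta,\tau,\e}-H_\tau=\big(\TT_{n,\e}[H_\tau]-H_\tau\big)+\TT_{n,\e}[G_n]$, handles the first term by the earlier smoothing theorems, and bounds the plug-in term via a quotient-rule estimate on kernel derivatives (Lemma~\ref{kernel:lemma:hks_diff}) combined with Proposition~\ref{HKS:Dunson}. The one small inefficiency in your argument is the appeal to a high-probability lower bound on $D(x)=\T_{n,\e}[1](x)$ for the plug-in term: since every quotient-rule contribution to $\nabla_\M^k\TT_{n,\e}[G_n]$ has numerator bounded by a constant times $\beta\,\e^{-2k}\,\T_{n,\e}[1](x)$ raised to the same power as the denominator, the $\T_{n,\e}[1]$ factors cancel and the bound $\sup_x\|\nabla_\M^k\TT_{n,\e}[G_n](x)\|\le C_0\beta/\e^{2k}$ holds deterministically, which is exactly how the paper's lemma avoids any denominator condition.
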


\begin{corollary} \label{HKS:Berry}
	For distinct $x_1, \dots, x_m \in \M$ with $\|\nabla_\M H_\tau(x_j)\| \ne 0, j=1,\dots,m$, let
	\begin{align*}
		\tbfZ_{n,N,\eta,\tau,\e} &= \sqrt{n \e^{d-2}} \left( \widehat{H}_{n,N,\eta,\tau,\e} (x_j) - H_\tau (x_j) \right)_{j=1}^m, \\
		\Sigma_{\tau} &= \, \diag \left( \frac{\|\nabla_\M H_\tau(x_j)\|^2}{2(4\pi)^{d/2}\rho(x_j)} \right)_{j=1}^m.
	\end{align*}
	Then, if Assumption~\ref{HKS:assume} holds, there exist $\e_0,C_0$ - $C_3 > 0$ such that if $\e < \e_0$ and $n\e^d > C_3$, we have
	\begin{align}
		\distcvx(\tbfZ_{n,N,\eta,\tau,\e}, \cN_{\Sigma_\tau})
		\frac{C_1 \log(n\e^d)}{\sqrt{n\e^d}} + C_2 \e + C_0 \sqrt{n\e^{d+2}} 
		+ C_0 \sqrt{n\e^{d-2}} \left[ \frac{\Omega_4}{N} (\tau+1) e^{-\Omega_5 \tau} + \Omega_6 \eta \right] + \frac{1}{n^2},
	\end{align}
	where $\Omega_4$ - $\Omega_6$ are from Proposition \ref{HKS:Dunson}.
	The constants $\e_0,C_0$ - $C_3$ are $\M$-dependent, and they also depend on $\rho_\tmin$, the $C^2$-norms of $H_\tau$ and $\rho$, $m$, and the values $\|\nabla_\M H_\tau(x_j)\|$, $\rho(x_j)$, $j = 1,\ldots,m$.
\end{corollary}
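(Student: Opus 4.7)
The plan is to relate $\tbfZ_{n,N,\eta,\tau,\e}$ to the normalized kernel smoothing vector
$\widetilde{\overline{\bfZ}}_{n,\e,H_\tau} = \sqrt{n\e^{d-2}}\bigl(\TT_{n,\e}[H_\tau](x_j) - H_\tau(x_j)\bigr)_{j=1}^m$, for which Theorem \ref{Berry:nor} already supplies a Berry--Esseen bound, and then to control the difference through Proposition \ref{HKS:Dunson}. Starting from the decomposition
\begin{equation*}
\widehat{H}_{n,N,\eta,\tau,\e}(x) - H_\tau(x) = \bigl(\widehat{H}_{n,N,\eta,\tau,\e}(x) - \TT_{n,\e}[H_\tau](x)\bigr) + \bigl(\TT_{n,\e}[H_\tau](x) - H_\tau(x)\bigr),
\end{equation*}
the second summand is handled by Theorem \ref{Berry:nor} applied to $f = H_\tau$, noting that $H_\tau \in C^\infty(\M)$ because the heat kernel is smooth on a compact boundaryless manifold and that $\overline{\Sigma}_{H_\tau} = \Sigma_\tau$. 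This step contributes the terms $\frac{C_1 \log(n\e^d)}{\sqrt{n\e^d}} + C_2 \e + C_0 \sqrt{n\e^{d+2}}$.

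For the first summand, the key observation is that both $\widehat{H}_{n,N,\eta,\tau,\e}(x)$ and $\TT_{n,\e}[H_\tau](x)$ are convex combinations of their respective values at the data points with identical nonnegative weights $w_i(x) = K_\e(\|X_i - x\|_{\R^D})\bigl/\sum_j K_\e(\|X_j - x\|_{\R^D})$. Therefore
\begin{equation*}
\bigl|\widehat{H}_{n,N,\eta,\tau,\e}(x) - \TT_{n,\e}[H_\tau](x)\bigr| \le \max_{1 \le i \le n}\bigl|\tilde{H}_{n,N,\eta,\tau}(X_i) - H_\tau(X_i)\bigr|,
\end{equation*}
uniformly in $x$, and by Proposition \ref{HKS:Dunson} this right-hand side is at most $A_{N,\eta,\tau} := \frac{\Omega_4}{N}(\tau+1)e^{-\Omega_5 \tau} + \Omega_6 \eta$ with probability at least $1 - n^{-2}$. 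On this good event, $\|\tbfZ_{n,N,\eta,\tau,\e} - \widetilde{\overline{\bfZ}}_{n,\e,H_\tau}\|_\infty \le \sqrt{n\e^{d-2}}\,A_{N,\eta,\tau}$.

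To pass this perturbation through the convex-set distance, I will invoke the standard lemma that if $Z, W \in \R^m$ satisfy $\P(\|Z - W\|_\infty > \delta) \le p$, then $\distcvx(Z, \cN_\Sigma) \le \distcvx(W, \cN_\Sigma) + p + \sup_{A \in \cvx(\R^m)} \Phi_{m,0,\Sigma}\bigl(\{u : \dist(u, \partial A) \le \delta\}\bigr)$, obtained from the two-sided inclusions $\{Z \in A\} \cap \{\|Z - W\|_\infty \le \delta\} \subset \{W \in A^{+\delta}\}$ and its erosion counterpart. By Nazarov's anti-concentration inequality for tubular neighborhoods of convex sets under Gaussian measures, the supremum is of order $\delta$ as long as $\Sigma$ is positive definite and $m$ is fixed. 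Since $\Sigma_\tau$ is diagonal with strictly positive entries (using $\|\nabla_\M H_\tau(x_j)\| \ne 0$), applying this lemma with $\delta = \sqrt{n\e^{d-2}}\,A_{N,\eta,\tau}$ and $p = n^{-2}$ yields the contribution $C_0 \sqrt{n\e^{d-2}}\,A_{N,\eta,\tau} + n^{-2}$, and combining with the bound for $\widetilde{\overline{\bfZ}}_{n,\e,H_\tau}$ gives the claimed inequality.

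The routine but subtle step to verify is Nazarov's anti-concentration bound together with the absorption of $\lambda_{\min}(\Sigma_\tau)^{-1/2}$ and $m$ into the listed $\M$-dependent and data-dependent constants; the remaining ingredients, namely the convex-combination bound for the approximation error, Proposition \ref{HKS:Dunson}, and Theorem \ref{Berry:nor}, are all directly available, so the argument is in essence a careful triangle inequality in the convex-set Kolmogorov metric.
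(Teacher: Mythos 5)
Your proposal is correct and follows essentially the same route as the paper: the decomposition into $\TT_{n,\e}[\tilde H_{n,N,\eta,\tau}-H_\tau]+(\TT_{n,\e}[H_\tau]-H_\tau)$ is exactly the paper's, your convex-combination observation is precisely the paper's Lemma~\ref{kernel:lemma:hks_diff} with $k=0$, and your use of the Gaussian anti-concentration bound for tubular neighborhoods of convex sets to absorb a small-perturbation-plus-failure-probability term is the paper's Lemma~\ref{Berry:unnor:error} (whose proof invokes Lemma 2.6 of Bentkus, i.e. the bound you call Nazarov's). The only surface difference is that you use the $\ell^\infty$ norm where the paper uses $\ell^2$, which is immaterial for fixed $m$.
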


Note that for large $\tau$, the optimal convergence rate in Proposition~\ref{HKS:Dunson} is $O(\eta)=O(n^{-\frac{1}{4d+13}})$.
In this case, to ensure convergence in Corollary~\ref{HKS:unif}, we require $\e=n^{-\gamma}$ with $0<\gamma<\frac{1}{4(4d+13)}$, whereas for Corollary~\ref{HKS:Berry}, we require $\frac{1}{d+2}\vee\frac{4d+11}{2(d-2)(4d+13)}<\gamma<\frac{1}{d}$ and $d\ge4$.
We also see that if $\eta=\e$, we cannot guarantee any convergence.

% =======================
% ===== Discussions =====
% =======================
\section{Discussion}

Above, kernel smoothing has been formulated assuming the dimension $d$ of the manifold is known.
If $d$ is unknown, it must be estimated; let $\hat{d}_n$ denote such an estimator.
In this case, as long as $\hat{d}_n$ is consistent, our results continue to hold.
For instance, let $\widehat{\T}_{n,\e} [f]$ denote the unnormalized kernel smoothing obtained by replacing $d$ in the kernel with $\hat{d}_n$. For any measurable set $A \subset \R$, it holds that
$$ \left|\P(\widehat{\T}_{n,\e} [f] (x)\in A)- \P(\T_{n,\e} [f] (x)\in A)\right| \leq 2 \P(\hat{d}_n \ne d). $$
Thus, the error caused by using $\hat{d}_n$ is negligible whenever $\hat{d}_n$ is consistent at an appropriate rate.
There is an extensive literature on constructing such estimators with explicit rate; see, e.g., \cite{block2022intrinsic} or \cite{farahmand2007manifoldadaptive}.

Also, the asymptotic properties of kernel smoothing we have shown depend on the compactness and boundedness of the manifold.
For a non-compact manifold, to control the convergence rate uniformly, additional assumptions appear to be needed, such as uniform bound of curvature of the manifold.
For manifolds with boundary, boundary corrections can be applied, assuming the manifold to be known.
This is considered in \cite{berry2017density}, where pointwise convergence of boundary-corrected KDE on a manifold is derived.
We expect that similar corrections can be applied to the more general setting of kernel smoothing; we leave it as future work.

% ======================
% ===== References =====
% ======================
\bibliography{smoothing_bib}

% ====================
% ===== Appendix =====
% ====================
\appendix

% ============================
% ===== Simulation study =====
% ============================
\section{Simulation study}
In this section, we show simulation results to validate our results.
Primarily, we will mainly focus on the asymptotic distribution of kernel smoothing.

First, we consider a circle in $\R^2$ centered at the origin with the radius 5, which is a 1-dimensional compact manifold without boundary.
$n$ i.i.d data points are sampled from this circle with uniform distribution.
The bandwidth is chosen as $\e = n^{-1/(d+1)}$ so that $n\e^d$ is large but $n \e^{d+2}$ is small for convergence.
Now, to illustrate Theorems~\ref{Berry:unnor} and \ref{Berry:nor},
we set $f(x_1, x_2) = e^{\sin(x_1)} + x_2$, and evaluate
\begin{equation}
	\sqrt{n \e^d} \left( \T_{n,\e} [f] (x) - \rho(x) f(x) \right), \label{formula: simul1}
\end{equation}
and its normalized version
\begin{equation}
	\sqrt{n \e^{d-2}} \left( \TT_{n,\e} [f] (x) - f(x) \right) \label{formula: simul2}
\end{equation}
at $x = (5,0)$ and $x=(0,5)$.
We perform this 300 times to compare with the corresponding asymptotic distribution.
The results are visualized in Figure \ref{fig: circle-unnormal} and \ref{fig: circle-normal}.
These show that the asymptotic behavior holds as $n$ becomes large.

Next, to investigate a more complex case, we consider the surface of a 2-dimensional torus embedded in $\R^3$ with the major radius $R=1/2$ and the minor radius $r=1/3$.
$n$ i.i.d. data 

points are sampled from the bivariate von Mises sine model in the following way: The torus is parameterized by
$ (    (R+r \cos\theta_2) \cos \theta_1, (R+r \cos\theta_2) \sin \theta_1, r \sin\theta_2 ) $
and the density is imposed to $(\theta_1, \theta_2) \in S^1 \times S^1$ by
\begin{equation*}
	\rho(\theta_1,\theta_2 | \mu_1, \mu_2, \kappa_1, \kappa_2) \sim  e^{\kappa_1 \cos \left(\theta_1 -\mu_1\right)+\kappa_2 \cos \left(\theta_2 -\mu2\right)
		+\kappa_3 \sin \left(\theta_1 - \mu_1\right) \sin \left(\theta_2 - \mu_2\right)}. %, \quad (\theta_1, \theta_2) \in S^1 \times S^1
\end{equation*}
We set $\mu_1 = \mu_2 = 0$, $\kappa_1 = \kappa_2 = 1$, $\e=n^{-1/(d+1)}$ and $f(x_1,x_2,x_3) =  \sin(x_1-x_2) + e^{-\cos(x_1+x_2)} + x_3^2 $, and
evaluate \eqref{formula: simul1} and \eqref{formula: simul2} at $x=(1/6,0,0)$ and $x=(0,-1/2,1/3)$.
Again, we repeat 300 times and obtain empirical distributions.
Like the circle case, we can observe the asymptotic behavior from Figures \ref{fig: torus-unnormal} and \ref{fig: torus-normal}.

\begin{figure}[htbp]
	\centering
	\begin{subfigure}{.33\linewidth}
		\centering
		\includegraphics[height = 3cm,width=.8\linewidth]{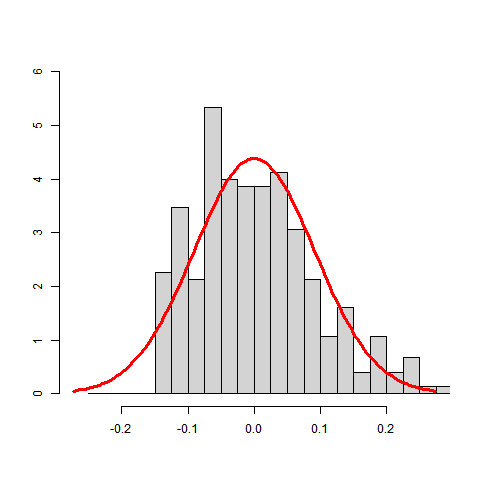}
		\vspace{-1em}
		\caption*{$n=500$}
	\end{subfigure}%
	\begin{subfigure}{.33\linewidth}
		\centering
		\includegraphics[height = 3cm,width=.8\linewidth]{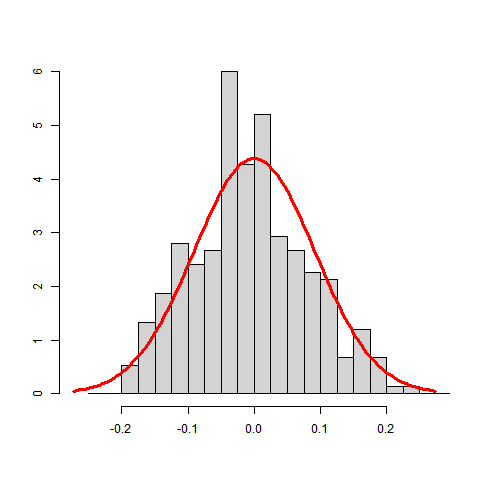}
		\vspace{-1em}
		\caption*{$n=2000$}
	\end{subfigure}%
	\begin{subfigure}{.33\linewidth}
		\centering
		\includegraphics[height = 3cm,width=.8\linewidth]{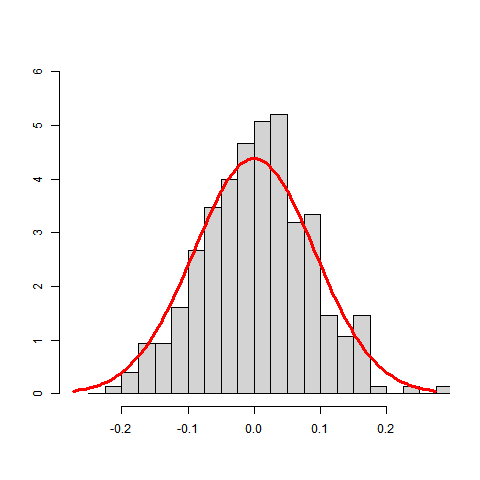}
		\vspace{-1em}
		\caption*{$n=10000$}
	\end{subfigure}%
	\vspace{-0.25em}
	\newline
	\begin{subfigure}{.33\linewidth}
		\centering
		\includegraphics[height = 3cm,width=.8\linewidth]{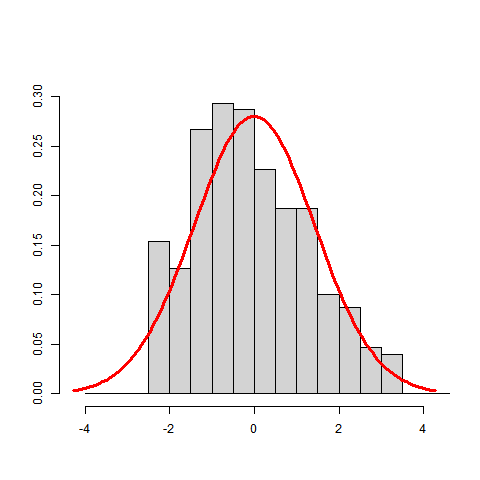}
		\vspace{-1em}
		\caption*{$n=500$}
	\end{subfigure}%
	\begin{subfigure}{.33\linewidth}
		\centering
		\includegraphics[height = 3cm,width=.8\linewidth]{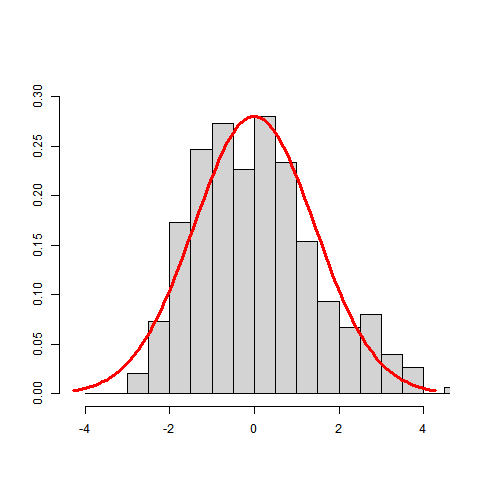}
		\vspace{-1em}
		\caption*{$n=2000$}
	\end{subfigure}%
	\begin{subfigure}{.33\linewidth}
		\centering
		\includegraphics[height = 3cm,width=.8\linewidth]{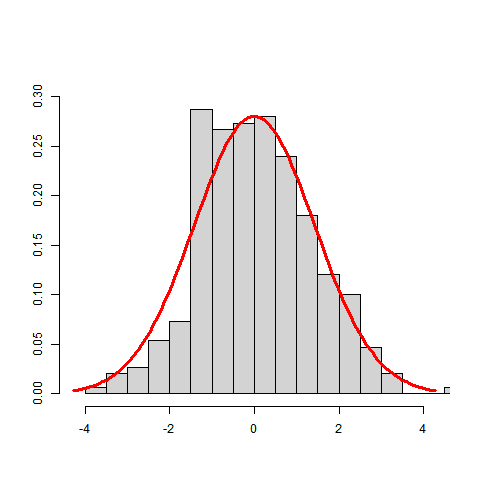}
		\vspace{-1em}
		\caption*{$n=10000$}
	\end{subfigure}%
	\caption{
		Empirical distribution of \eqref{formula: simul1} over 300 simulated datasets from the circle case and the corresponding theoretic asymptotic density (red line)
		at two points (5,0) (top) and (0,5) (bottom).
	}
	\label{fig: circle-unnormal}
\end{figure}

\begin{figure}[htbp]
	\centering
	\begin{subfigure}{.33\linewidth}
		\centering
		\includegraphics[height = 3cm,width=.8\linewidth]{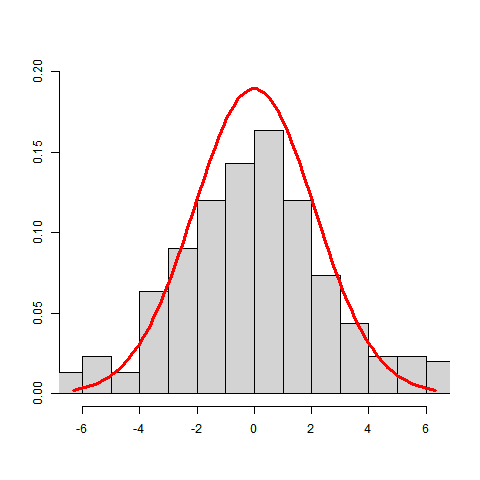}
		\vspace{-1em}
		\caption*{$n=500$}
	\end{subfigure}%
	\begin{subfigure}{.33\linewidth}
		\centering
		\includegraphics[height = 3cm,width=.8\linewidth]{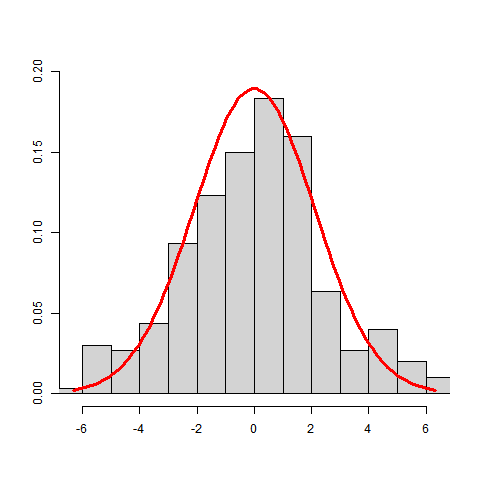}
		\vspace{-1em}
		\caption*{$n=2000$}
	\end{subfigure}%
	\begin{subfigure}{.33\linewidth}
		\centering
		\includegraphics[height = 3cm,width=.8\linewidth]{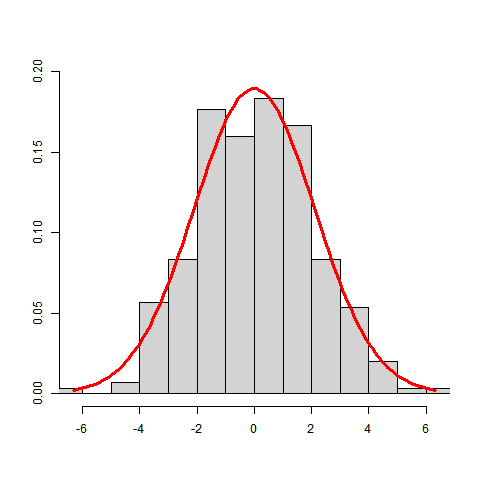}
		\vspace{-1em}
		\caption*{$n=10000$}
	\end{subfigure}%
	\vspace{-0.25em}
	\newline
	\begin{subfigure}{.33\linewidth}
		\centering
		\includegraphics[height = 3cm,width=.8\linewidth]{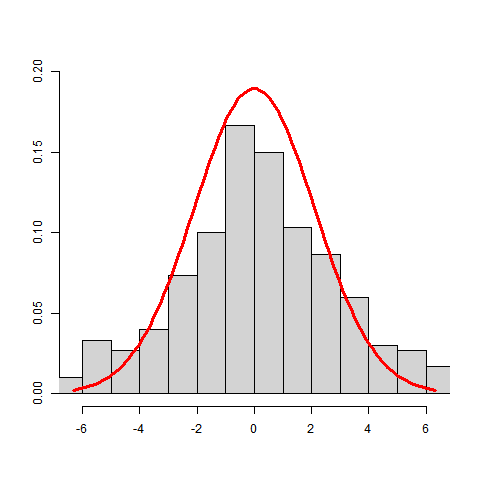}
		\vspace{-1em}
		\caption*{$n=500$}
	\end{subfigure}%
	\begin{subfigure}{.33\linewidth}
		\centering
		\includegraphics[height = 3cm,width=.8\linewidth]{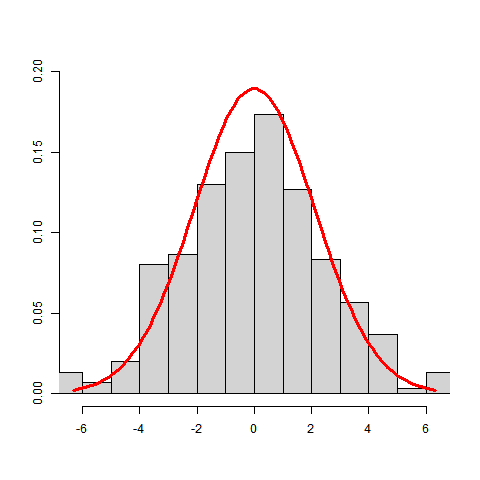}
		\vspace{-1em}
		\caption*{$n=2000$}
	\end{subfigure}%
	\begin{subfigure}{.33\linewidth}
		\centering
		\includegraphics[height = 3cm,width=.8\linewidth]{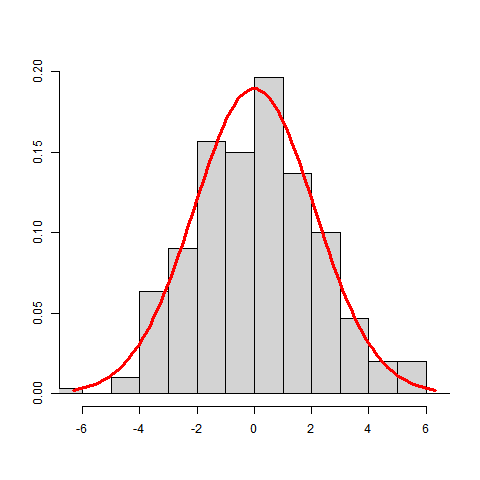}
		\vspace{-1em}
		\caption*{$n=10000$}
	\end{subfigure}%
	\caption{
		Empirical distribution of (\ref{formula: simul2}) over 300 simulated datasets from the circle case and the corresponding theoretic asymptotic density (red line)
		at two points (5,0) (top) and (0,5) (bottom).
	}
	\label{fig: circle-normal}
\end{figure}

\begin{figure}[htbp]
	\centering
	\begin{subfigure}{.33\linewidth}
		\centering
		\includegraphics[height = 3cm,width=.8\linewidth]{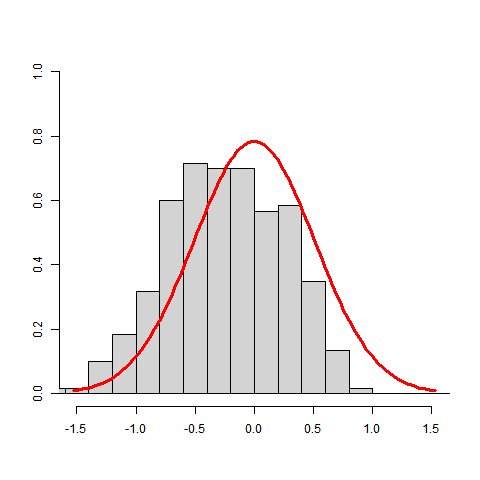}
		\vspace{-1em}
		\caption*{$n=500$}
	\end{subfigure}%
	\begin{subfigure}{.33\linewidth}
		\centering
		\includegraphics[height = 3cm,width=.8\linewidth]{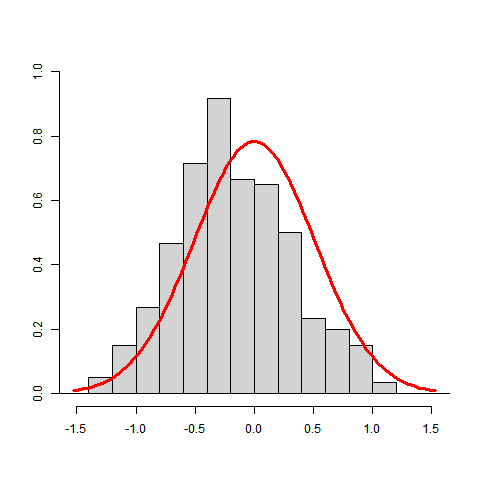}
		\vspace{-1em}
		\caption*{$n=2000$}
	\end{subfigure}%
	\begin{subfigure}{.33\linewidth}
		\centering
		\includegraphics[height = 3cm,width=.8\linewidth]{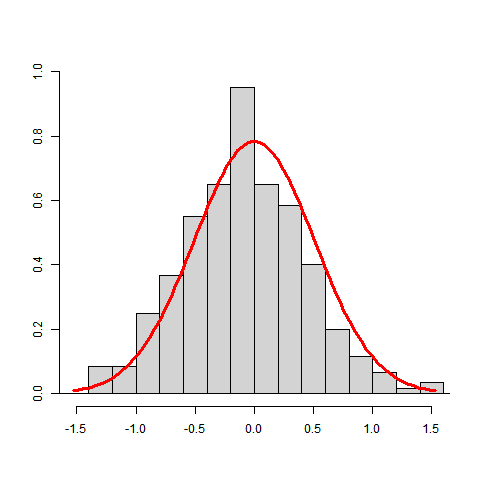}
		\vspace{-1em}
		\caption*{$n=10000$}
	\end{subfigure}%
	\vspace{-0.25em}
	\newline
	\begin{subfigure}{.33\linewidth}
		\centering
		\includegraphics[height = 3cm,width=.8\linewidth]{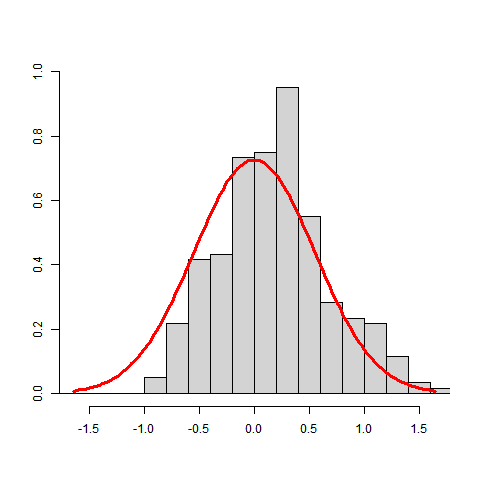}
		\vspace{-1em}
		\caption*{$n=500$}
	\end{subfigure}%
	\begin{subfigure}{.33\linewidth}
		\centering
		\includegraphics[height = 3cm,width=.8\linewidth]{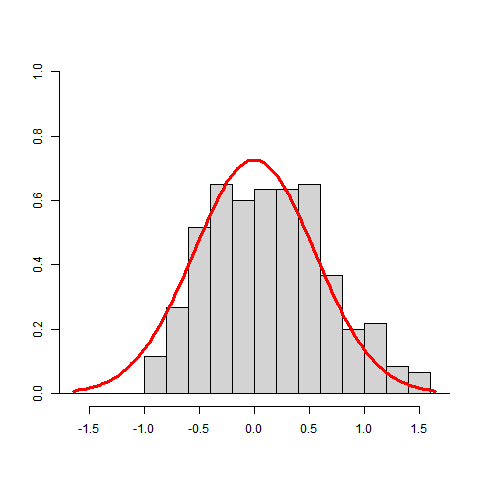}
		\vspace{-1em}
		\caption*{$n=2000$}
	\end{subfigure}%
	\begin{subfigure}{.33\linewidth}
		\centering
		\includegraphics[height = 3cm,width=.8\linewidth]{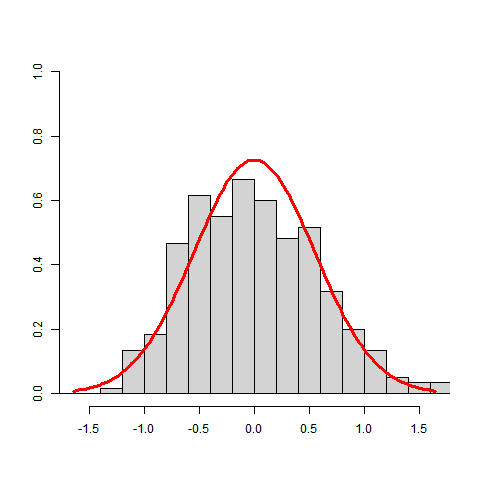}
		\vspace{-1em}
		\caption*{$n=10000$}
	\end{subfigure}%
	\caption{
		Empirical distribution of \eqref{formula: simul1} over 300 simulated datasets from the torus case and the corresponding theoretic asymptotic density (red line)
		at two points (1/6,0,0) (top) and (0,-1/2,1/3) (bottom).
	}
	\label{fig: torus-unnormal}
\end{figure}

\begin{figure}[htbp]
	\centering
	\begin{subfigure}{.33\linewidth}
		\centering
		\includegraphics[height = 3cm,width=.8\linewidth]{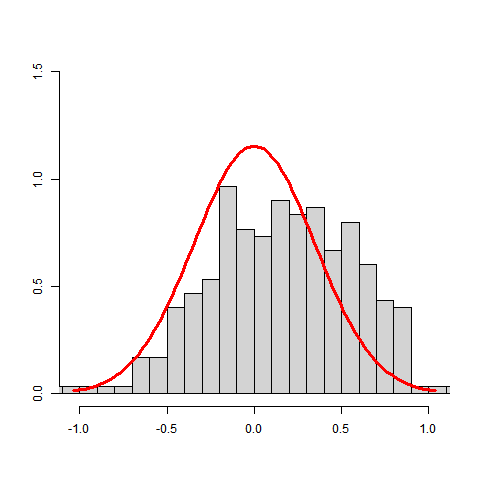}
		\vspace{-1em}
		\caption*{$n=500$}
	\end{subfigure}%
	\begin{subfigure}{.33\linewidth}
		\centering
		\includegraphics[height = 3cm,width=.8\linewidth]{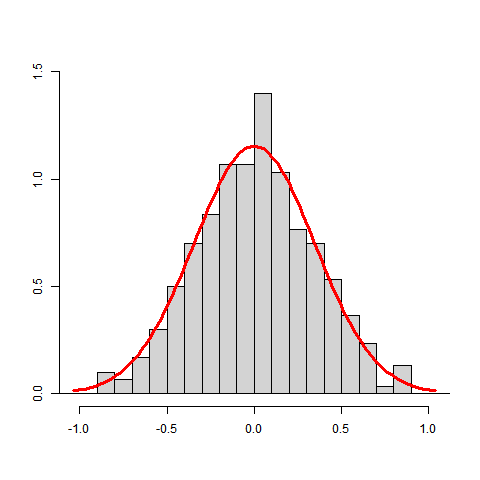}
		\vspace{-1em}
		\caption*{$n=2000$}
	\end{subfigure}%
	\begin{subfigure}{.33\linewidth}
		\centering
		\includegraphics[height = 3cm,width=.8\linewidth]{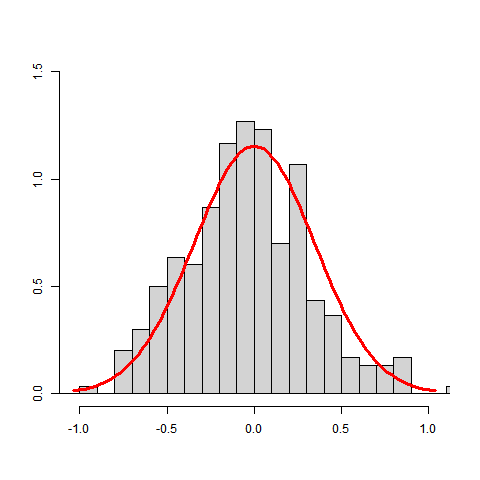}
		\vspace{-1em}
		\caption*{$n=10000$}
	\end{subfigure}%
	\vspace{-0.25em}
	\newline
	\begin{subfigure}{.33\linewidth}
		\centering
		\includegraphics[height = 3cm,width=.8\linewidth]{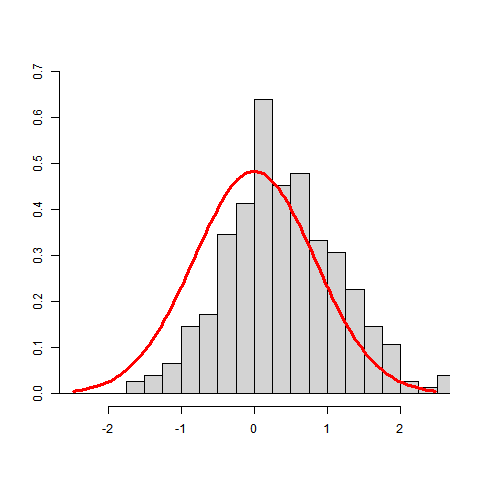}
		\vspace{-1em}
		\caption*{$n=500$}
	\end{subfigure}%
	\begin{subfigure}{.33\linewidth}
		\centering
		\includegraphics[height = 3cm,width=.8\linewidth]{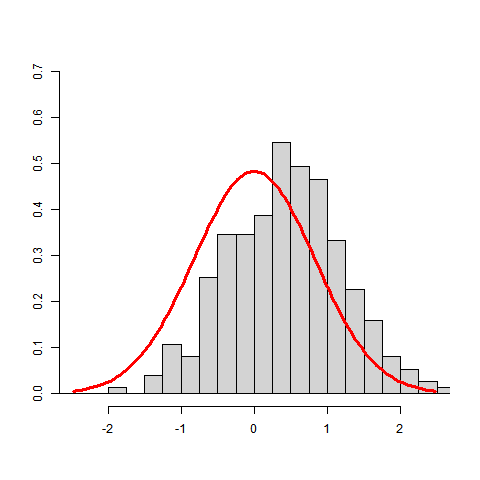}
		\vspace{-1em}
		\caption*{$n=2000$}
	\end{subfigure}%
	\begin{subfigure}{.33\linewidth}
		\centering
		\includegraphics[height = 3cm,width=.8\linewidth]{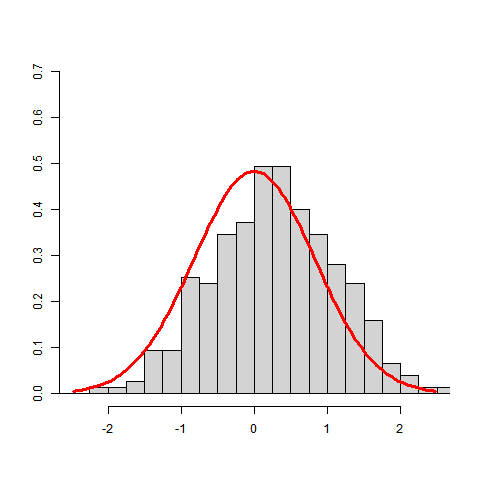}
		\vspace{-1em}
		\caption*{$n=10000$}
	\end{subfigure}%
	\caption{
		Empirical distribution of (\ref{formula: simul2}) over 300 simulated datasets from the torus case and the corresponding theoretic asymptotic density (red line)
		at two points (1/6,0,0) (top) and (0,-1/2,1/3) (bottom).
	}
	\label{fig: torus-normal}
\end{figure}

% =================
% ==== Proofs =====
% =================
\newpage
\section{Proofs}
\subsection{Auxiliary results}

We present key technical results along with some notation that will be used throughout the proofs.

For each $x \in \M$, $\Exp^{-1}_x$ determines a local coordinate system near $x$ called normal coordinates.
With the identification of $T_x \M$ with $\R^d$, for $U \subset \R^d$ a neighborhood of 0 with suitable size,
one can express the integral of a function $f$ on $\Exp_x (U)$ with respect to the volume measure $\mathrm{vol}_\M$ as
\begin{equation}
	\int_{\Exp_x (U)} f(u) \dvol(u) = \int_{U} \tilde{f}_x (z) G_x(z) dz,
\end{equation}
where $\tilde{f}_x = f \circ \Exp_x$ is the pullback of $f$ by $\Exp_x$, and $G_x$ is the volume form at $x$ with respect to normal coordinates.
We use this approach to express integrals on $\M$ explicitly, and so we need the expansions of $G_x$ and $\Exp_x$ in terms of normal coordinates.

The result is summarized in the following proposition, and the proof can be found in Proposition 2.2 of \cite{gine2006empirical} and Section 3 of \cite{monera2014taylor}.
Throughout the following, the notation $R_k(z)$ denotes a homogeneous term of degree $k$ in $z \in \R^d$.
Depending on the context, $R_k(z)$ may represent a scalar, a vector, or a matrix whose components are homogeneous polynomials of degree $k$.

\begin{proposition} \label{smoothing:aux:expansion}
	Suppose Assumption \ref{smoothing:assume:manifold} holds. Then, there exists a positive constant $r$ depending on the injectivity radius of $\M$ such that for any $x \in \M$ and $z \in \R^d$ with $\|z\|_{\R^d} < r$, the following hold:
	\begin{enumerate}[label=(\alph*)]
		\item
		$G_x(z)$ admits the Taylor expansion
		\begin{equation}
			G_x (z) = 1 - \frac{1}{6} z^\top \Ric_x z + R_3 (z) + R_4(z) + \dots,
		\end{equation}
		where $\Ric_{x} \in \R^{d \times d}$ denotes the matrix representation of the Ricci curvature tensor of $\M$ at $x$ in normal coordinates.
		
		\item
		$\Exp_{x} (z)$, when viewed as a map from $\R^d$ to $\R^D$, admits the Taylor expansion
		\begin{equation}
			\Exp_{x} (z) = x + J(x) z + \frac{1}{2} \sff_{x} (z, z) + \frac{1}{6} \alpha_{x,3} (z) + R_4(z) + \dots,
		\end{equation}
		where $J(x) \in \R^{D \times d}$ denotes the Jacobian matrix of $\Exp_x$ at the origin (in particular, the columns of $J(x)$, $\set{J_1(x), \dots, J_d(x)}$, are orthonormal in $\R^D$),
		$\sff_{x}: \R^d \times \R^d \rightarrow (T_x \M)^\perp \subset \R^D$ denotes the second fundamental form of $\M$ at $x$,
		and $\alpha_{x,3} (z) \in \R^D$ is a homogeneous cubic term in $z$ satisfying
		\begin{equation*}
			J (x)^\top \alpha_{x,3} (z) = -\left( \sff_x(e_1,z)^\top \sff_x(z,z), \ldots, \sff_x(e_d,z)^\top
			\sff_x(z,z) \right)^\top  \in \R^{d}
		\end{equation*}
		with $\set{e_1, \dots, e_d}$ denoting the standard basis of $\R^d$.
		
		\item 
		The norm of the difference $\Exp_x(z)-x$ satisfies
		\begin{equation}
			\|\Exp_{x} (z) - x \|_{\R^D}^2 - \|z\|_{\R^d}^2  
			= -\frac{1}{12} \sff_x(z,z)^\top \sff_x(z,z) + R_5 (z) + \dots,
		\end{equation}
		and
		\begin{equation}
			\frac{1}{2} \|z\|_{\R^d} \leq \|\Exp_{x} (z) - x \|_{\R^D} \leq \|z\|_{\R^d}.
		\end{equation}
	\end{enumerate}
	Here, all implied coefficients in the $R_k$ terms are uniformly bounded by an $\M$-dependent constant for all $x \in \M$ and $z\in\R^d$ with $\|z\|_{\R^d} < r$.
\end{proposition}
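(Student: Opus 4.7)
The plan is to derive all three statements from standard computations in normal coordinates, closely following \cite{gine2006empirical} and \cite{monera2014taylor}, with care to track uniformity in $x \in \M$. For (a), I would start from the classical expansion of the metric tensor in normal coordinates, $g_{ij}(z) = \delta_{ij} - \tfrac{1}{3} R_{ikjl}(x) z^k z^l + O(\|z\|_{\R^d}^3)$, where $R_{ikjl}$ is the Riemann curvature tensor. Since $G_x(z) = \sqrt{\det g_{ij}(z)}$, applying $\det(I+A) = 1 + \tr(A) + O(\|A\|^2)$ and $\sqrt{1+t} = 1 + \tfrac{1}{2} t + O(t^2)$ yields the leading term $1 - \tfrac{1}{6} z^\top \Ric_x z$. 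Higher-order $R_k(z)$ terms come from continuing the Taylor expansion of $g_{ij}$, whose coefficients are polynomials in covariant derivatives of the Riemann tensor at $x$.

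For (b), I would compute the derivatives of $z \mapsto \Exp_x(z) \in \R^D$ at $z=0$ using the geodesic equation. The first derivative is $J(x)$, whose columns are orthonormal in $\R^D$ because normal coordinates realize $T_x\M$ isometrically. For the second derivative: along a geodesic $\gamma(t) = \Exp_x(tv)$ the intrinsic acceleration vanishes, so the ambient acceleration $\ddot{\gamma}(0)$ lies in $(T_x\M)^\perp$ and by definition of the second fundamental form equals $\sff_x(v,v)$. Differentiating the geodesic equation once more and projecting onto the tangent space produces the stated tangential formula for $J(x)^\top \alpha_{x,3}(z)$ via the Gauss-Codazzi relations; the higher-order $R_k(z)$ terms involve further covariant derivatives of $\sff$ and of the Riemann tensor.

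For (c), I would substitute the expansion from (b) into $\|\Exp_x(z)-x\|_{\R^D}^2$. The quadratic term is $\|J(x)z\|_{\R^D}^2 = \|z\|_{\R^d}^2$ by orthonormality of the columns of $J(x)$; the cubic cross term $\langle J(x)z, \sff_x(z,z)\rangle_{\R^D}$ vanishes by orthogonality of $T_x\M$ and $(T_x\M)^\perp$. The quartic contribution combines $\tfrac{1}{4}\|\sff_x(z,z)\|_{\R^D}^2$ with the cross term $\tfrac{1}{3} z^\top J(x)^\top \alpha_{x,3}(z)$; since bilinearity of $\sff_x$ gives $\sum_i z_i \sff_x(e_i,z) = \sff_x(z,z)$, the formula for $J(x)^\top \alpha_{x,3}(z)$ from (b) yields $z^\top J(x)^\top \alpha_{x,3}(z) = -\sff_x(z,z)^\top \sff_x(z,z)$, so the quartic contribution collapses to $\bigl(\tfrac{1}{4} - \tfrac{1}{3}\bigr)\sff_x(z,z)^\top \sff_x(z,z) = -\tfrac{1}{12}\sff_x(z,z)^\top \sff_x(z,z)$. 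The upper bound $\|\Exp_x(z)-x\|_{\R^D} \le \|z\|_{\R^d}$ follows because the Euclidean segment joining two points in $\R^D$ is no longer than any curve connecting them, in particular the geodesic in $\M$ of length $\|z\|_{\R^d}$; the lower bound $\tfrac{1}{2}\|z\|_{\R^d} \le \|\Exp_x(z)-x\|_{\R^D}$ follows from the quadratic expansion once $r$ is chosen small enough to absorb the higher-order remainders.

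The main obstacle is uniformity in $x$ of the $R_k$ coefficients and of the constant $r$. This is handled by observing that the Taylor coefficients are polynomial expressions in the values at $x$ of the Riemann tensor, the second fundamental form, and their covariant derivatives, all uniformly bounded by the $\M$-dependent constants of Assumption \ref{smoothing:assume:manifold}. The constant $r$ must additionally be smaller than the injectivity radius of $\M$, which is strictly positive by compactness, so a single such $r$ works for all $x \in \M$.
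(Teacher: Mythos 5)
Your proposal is correct and reproduces, with a bit more detail, the normal-coordinate Taylor expansions that the paper obtains purely by citation to Proposition 2.2 of Gin\'e--Koltchinskii (2006) and Section 3 of Monera et al.\ (2014): part (a) via $G_x=\sqrt{\det g}$ and the metric expansion $g_{ij}=\delta_{ij}-\frac13 R_{ikjl}z^k z^l+\dots$, part (b) via the ambient geodesic ODE $\ddot\gamma=\sff_\gamma(\dot\gamma,\dot\gamma)$ and one further differentiation, and part (c) by direct substitution, where the key cancellation $\tfrac14\|\sff_x(z,z)\|^2+\tfrac13\,z^\top J(x)^\top\alpha_{x,3}(z)=\bigl(\tfrac14-\tfrac13\bigr)\sff_x(z,z)^\top\sff_x(z,z)=-\tfrac1{12}\sff_x(z,z)^\top\sff_x(z,z)$ is exactly right. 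The closing observation that all coefficients are polynomial in curvature, second fundamental form, and their covariant derivatives, hence uniformly bounded on the compact $\M$, is precisely the uniformity supplement the paper needs to turn those cited pointwise expansions into the stated $\M$-uniform bounds.
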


Recall the notation $K_\e \left( u\right) = (2 \pi\e^2)^{-d/2} \exp \big(-\frac{1}{2\e^2} u^2\big),$ and note that $\int_{\R^d} K_\e(\|u\|) du = 1.$
For $\e = 1,$ we simply write $K(u)$.
Throughout the proofs, we will primarily work with $K$.

\begin{remark}
	Several arguments in our proofs reduce to evaluating convolution integrals over $\R^d$ by using Taylor expansions, that is,
	\begin{align*}
		&\quad \int_\M K_\e \left( \|u - x\|_{\R^D} \right) f(u) d \P(u)
		\\&= \int_{\R^d} K(\|z\|_{\R^d}) \Big( 1 + \frac{\e^2}{24} \sff_x (z,z)^\top \sff_x (z,z) + \e^3 R_5(z) + \e^4 R_6 (z) + \dots \Big)
		\\& \quad \times \Big( (\widetilde{\rho f})_x (0) + \e z^\top \nabla_{\R^d} (\widetilde{\rho f})_x (0)
		+ \frac{\e^2}{2} z^\top \left[ \nabla_{\R^d}^2 (\widetilde{\rho f})_x (0) \right] z 
		+ \e^3 R_3(z) + \ldots \Big)
		\\& \quad \times \Big( 1 - \frac{\e^2}{6} z^\top \Ric_x z + \e^3 R_3 (z) + \ldots \Big) dz + O(\e^\kappa),
	\end{align*}
	for arbitrary $\kappa > 0$.
	The right-hand side can then be simplified by either of the following:
	\begin{itemize}
		\item removing all odd-order terms in $z$ due to the symmetry of the kernel;
		\item evaluating even-order terms using the moments of the kernel.
	\end{itemize}
	
	We will present the details once in the following proof, and, to avoid repetitions, omit the details in other proofs.
\end{remark}

The following lemma serves to extend integration involving $K$ from an open ball to the whole space.

\begin{lemma} \label{smoothing:aux:tail}
	Suppose Assumption \ref{smoothing:assume:kernel} holds.
	For any non-negative integer $m$, there exists a positive constant $C(m,d)$ depending only on $m$ and $d$ 
	such that for any positive real number $\kappa$, we have
	\begin{align}
		\int_{ \set{z\in\R^d: \|z\|_{\R^d} \geq \kappa} } \|z\|_{\R^d}^m K(\|z\|_{\R^d}) dz 
		& \leq
		C(d,m) \kappa^{d+m-2} e^{-\frac{1}{2}\kappa^2}.
	\end{align}
\end{lemma}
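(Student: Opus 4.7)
The plan is to pass to spherical coordinates and then control the resulting one-dimensional tail integral by iterated integration by parts, which is the standard route to Gaussian tail bounds with polynomial weight.

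\emph{Step 1 (radial reduction).} Since $K(\|z\|_{\R^d}) = (2\pi)^{-d/2} e^{-\|z\|_{\R^d}^2/2}$ depends only on $\|z\|_{\R^d}$, changing to spherical coordinates gives
\begin{equation*}
    \int_{\{z\in\R^d:\,\|z\|_{\R^d}\ge \kappa\}} \|z\|_{\R^d}^m\, K(\|z\|_{\R^d})\, dz
    \;=\; (2\pi)^{-d/2} |S^{d-1}| \int_\kappa^\infty r^{m+d-1} e^{-r^2/2}\, dr.
\end{equation*}
Setting $p := m+d-2$, the problem reduces to showing that $I_p(\kappa) := \int_\kappa^\infty r^{p+1} e^{-r^2/2}\,dr$ admits the estimate $I_p(\kappa) \le C(d,m)\,\kappa^p e^{-\kappa^2/2}$ for a constant depending only on $d,m$.

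\emph{Step 2 (integration-by-parts recursion).} Using that $r e^{-r^2/2} = -\tfrac{d}{dr} e^{-r^2/2}$, a single integration by parts with $u=r^p$ and $dv=r e^{-r^2/2}\,dr$ yields the exact identity
\begin{equation*}
    I_p(\kappa) \;=\; \kappa^p e^{-\kappa^2/2} \;+\; p\,I_{p-2}(\kappa).
\end{equation*}
I would iterate this relation, lowering the exponent by two at each step, until the residual integrand has power at most zero. At the terminal step the residual is either $\int_\kappa^\infty e^{-r^2/2}\,dr$ or $\int_\kappa^\infty r^{-1} e^{-r^2/2}\,dr$; the latter is dominated by the former, and for the former the elementary Mill's-ratio estimate $\int_\kappa^\infty e^{-r^2/2}\,dr \le \kappa^{-1} e^{-\kappa^2/2}$ follows from $e^{-r^2/2}\le (r/\kappa)\,e^{-r^2/2}$ for $r\ge \kappa$.

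\emph{Step 3 (collection).} Unrolling the recursion produces a finite linear combination
\begin{equation*}
    I_p(\kappa) \;\le\; \sum_{k=0}^{\lceil p/2\rceil} c_k(p)\,\kappa^{p-2k}\, e^{-\kappa^2/2},
\end{equation*}
with coefficients $c_k(p)$ depending only on $p$. In the regime in which this lemma is invoked in the paper, $\kappa$ is of the form $r_0/\e$ with $r_0$ fixed and $\e$ small, so $\kappa$ may be assumed bounded below by $1$; then each $\kappa^{p-2k}$ is dominated by $\kappa^p$, and the desired bound follows with $C(d,m) = (2\pi)^{-d/2}|S^{d-1}|\sum_k c_k(p)$. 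For $\kappa$ in any bounded range the left-hand side is itself bounded by a constant depending only on $d,m$, which can be absorbed into $C(d,m)$ by enlarging the constant.

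There is no real obstacle in this argument; the analytic content is the single integration-by-parts identity combined with the Mill's-ratio estimate, and the only work is the bookkeeping of the iteration and the verification that the constant $C(d,m)$ indeed depends solely on $d$ and $m$.
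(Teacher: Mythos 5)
Your argument takes essentially the same route as the paper's: pass to spherical coordinates to reduce to the one-dimensional tail integral $\int_\kappa^\infty r^{d+m-1}e^{-r^2/2}\,dr$, then iterate the integration-by-parts identity (lowering the power by two each time) and close with the elementary Mill's-ratio bound at the base case. Steps 1 and 2 are exactly what the paper's terse ``Fact'' encapsulates, so there is no meaningful divergence in approach.

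One caveat about the closing remark in your Step 3. The claim that for $\kappa$ in a bounded range the left-hand side can be ``absorbed into $C(d,m)$ by enlarging the constant'' is not quite right: when $d+m-2>0$, the right-hand side $C(d,m)\,\kappa^{d+m-2}e^{-\kappa^2/2}$ tends to zero as $\kappa\to 0$, while the left-hand side tends to the full (positive) $m$-th Gaussian moment, so no finite enlargement of $C(d,m)$ saves the inequality near $\kappa=0$. As stated, the lemma (``for any positive $\kappa$'') is therefore slightly too strong; the bound genuinely requires $\kappa$ to be bounded away from zero, say $\kappa\ge 1$, which is exactly how it is invoked in the paper (with $\kappa=\e^{-a}\to\infty$). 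The paper's own ``Fact'' has the identical imprecision, so you are in good company, but it is cleaner to simply state the restriction $\kappa\ge 1$ rather than try to absorb the small-$\kappa$ case.
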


\begin{proof}[Proof of Lemma \ref{smoothing:aux:tail}]
	Using polar coordinates, we have
	\begin{align*}
		\int_{ \set{z\in\R^d: \|z\|_{\R^d} \geq \kappa }} \|z\|_{\R^d}^m K(\|z\|_{\R^d}) dz 
		&= C(d) \int_{\kappa}^\infty t^{d+m-1} e^{-\frac{1}{2}t^2} dt,
	\end{align*}
	where $C(d)$ is a positive constant depending only on $d$.
	Thus we immediately obtain the asserted bound from the following fact.
	
	{\bf Fact:} {\em For each non-negative integer $k$ and $\kappa > 0$, there exists a constant $C(k)$, such that}
	$$ \int_\kappa^\infty t^k e^{-\frac{1}{2}t^2}\,dt \le C(k) \, \kappa^{k-1}e^{-\frac{1}{2}\kappa^2}. $$
	
	\noindent
	The proof is standard by using repeated integration by parts. Details are omitted. 
\end{proof}

The following lemma states some simple results for the moments of $K$.

\begin{lemma} \label{smoothing:aux:quadratic}
	For any $d \times d$ symmetric matrix $A$, the following hold with $\tr(A)$ denoting the trace of $A$:
	\begin{align*}
		\int_{\R^d} K(\|z\|_{\R^d}) \left(z^\top A z\right) dz	&= \tr (A), \\
		\int_{\R^d} K(\|z\|_{\R^d}) \left(z^\top A z\right)^2 dz &= 2 \|A\|_\F^2 + \left(\tr(A)\right)^2, \\
		\int_{\R^d} K(\|z\|_{\R^d}) \left(z z^\top - I_d\right) \left(z^\top A z \right) dz &= 2A,\\
		\int_{\R^d} K^2(\|z\|_{\R^d}) dz &= \frac{1}{(4\pi)^{d/2}}, \\
		\int_{\R^d} K^2(\|z\|_{\R^d}) \left(z^\top A z\right) dz &= \frac{\tr (A)}{2(4\pi)^{d/2}}, \\
		\int_{\R^d} K^2(\|z\|_{\R^d}) \left(z^\top A z\right)^2 dz &= \frac{2 \|A\|_\F^2 + \left(\tr(A)\right)^2}{4(4\pi)^{d/2}},
	\end{align*}.
\end{lemma}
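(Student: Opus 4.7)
The plan is to recognize that all six identities are standard Gaussian moment computations. The key observation is that $K(\|z\|_{\R^d}) = (2\pi)^{-d/2}\exp(-\tfrac{1}{2}\|z\|_{\R^d}^2)$ is exactly the density of $Z \sim \cN_d(0, I_d)$, so for the first three identities we can replace the integrals by expectations under the standard Gaussian. For the last three, I would rewrite
\[
K^2(\|z\|_{\R^d}) \;=\; \frac{1}{(2\pi)^{d}}\,e^{-\|z\|_{\R^d}^2} \;=\; \frac{1}{(4\pi)^{d/2}}\cdot \pi^{-d/2}\,e^{-\|z\|_{\R^d}^2},
\]
and note that $\pi^{-d/2}e^{-\|z\|_{\R^d}^2}$ is the density of $W \sim \cN_d(0, \tfrac{1}{2} I_d)$. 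Thus $\int K^2(\|z\|) g(z)\,dz = \frac{1}{(4\pi)^{d/2}}\,\E[g(W)]$, reducing the second batch to Gaussian expectations again (with covariance $\tfrac12 I_d$).

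Having made this reduction, the six identities follow from two elementary Gaussian facts. The first is the linear identity $\E[Z^\top A Z] = \tr(\E[ZZ^\top] A) = \tr(A)$, which immediately gives formulas 1, 4, and 5 (for formula 5, the mean covariance is $\tfrac12 I_d$, producing the factor $\tfrac12$). The second is Isserlis' (Wick's) formula for centered Gaussians,
\[
\E[Z_i Z_j Z_k Z_l] \;=\; \delta_{ij}\delta_{kl}+\delta_{ik}\delta_{jl}+\delta_{il}\delta_{jk},
\]
from which formula 2 follows by a direct index calculation:
\[
\E\bigl[(Z^\top A Z)^2\bigr] \;=\; \sum_{i,j,k,l} A_{ij}A_{kl}\E[Z_iZ_jZ_kZ_l] \;=\; (\tr A)^2 + 2\|A\|_\F^2,
\]
using the symmetry of $A$. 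Formula 6 then follows because rescaling $Z \mapsto Z/\sqrt{2}$ divides $(Z^\top A Z)^2$ by $4$.

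For formula 3, the matrix identity $\E[(ZZ^\top - I_d)(Z^\top A Z)] = 2A$, I would compute the $(i,j)$-entry:
\[
\E\bigl[Z_iZ_j\,(Z^\top A Z)\bigr] - \delta_{ij}\tr(A) \;=\; \sum_{k,l} A_{kl}\bigl(\delta_{ij}\delta_{kl}+\delta_{ik}\delta_{jl}+\delta_{il}\delta_{jk}\bigr) - \delta_{ij}\tr(A),
\]
and observe that the first summand cancels $\delta_{ij}\tr(A)$, while the remaining two contribute $A_{ij}+A_{ji} = 2A_{ij}$ by symmetry of $A$.

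There is really no main obstacle here: each identity is a short Gaussian moment calculation, and the only structural step is the $K^2$-to-Gaussian rescaling at the beginning. Care is needed only in (i) verifying the normalization constant $\pi^{-d/2}$ so that the $(4\pi)^{d/2}$ prefactor comes out correctly, and (ii) correctly tracking the variance $\tfrac12$ when translating moments of $W$ back to moments of a standard Gaussian, which is what produces the $\tfrac12$ in the fifth identity and the $\tfrac14$ in the sixth.
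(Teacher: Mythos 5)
Your proof is correct, and it uses exactly the standard approach: the first three identities are moments under $\cN_d(0,I_d)$ (since $K$ is its density), and the last three reduce via $K^2 = (4\pi)^{-d/2}\cdot\pi^{-d/2}e^{-\|z\|^2}$ to moments under $\cN_d(0,\tfrac12 I_d)$, with all quartic moments handled by Isserlis' formula. The paper states this lemma without proof (evidently regarding it as routine), and your calculation fills that in faithfully; in particular the normalization $\tfrac{1}{(4\pi)^{d/2}}\pi^{-d/2} = (2\pi)^{-d}$ and the scaling $W \mapsto Z/\sqrt2$ that produce the factors $\tfrac12$ and $\tfrac14$ are both tracked correctly.
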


% ===== Uniform convergence: bias part =====
\subsection{Proof of Theorem \ref{unif:bias}}

\begin{proof}[Proof of \eqref{unif:bias-res1}]
	%{\em First we show \eqref{unif:bias-res1}.}
	Let $\e_0 \le \min(1,r)$ with $r$ from  Proposition~\ref{smoothing:aux:expansion}.
	For $0<a<1$ and $\e < \e_0$, set 
	\begin{equation*}
		B_{\e,a} = \{z \in \R^d : \|z\|_{\R^d} < \e^{-a} \}, \quad \e B_{\e,a} = \{z \in \R^d : \|z\|_{\R^d} < \e^{1-a} \}.
	\end{equation*}
	We split $\T_{\e} [f] (x)$ into two parts as
	\begin{equation} 
		\T_{\e} [f] (x) = \cA_1 + \cA_2,
		\label{unif:bias:eq1}
	\end{equation}
	where
	\begin{align*}
		\cA_1 &= \frac{1}{\e^d} \int_{\Exp_x( \e B_{\e,a})} K \left( \frac{\|u - x\|_{\R^D}}{\e} \right) \rho(u) f(u) \dvol (u),\\
		\cA_2 &= \frac{1}{\e^d} \int_{\M \backslash \Exp_x( \e B_{\e,a})} K \left( \frac{\|u - x\|_{\R^D}}{\e} \right) \rho(u) f(u) \dvol (u),
	\end{align*}
	and show each part to be bounded as asserted.
	
	\begin{myenumerate}
		\myitem{Bounding $\cA_1$}
		Applying the change of variables using normal coordinates, we have
		\begin{align*}
			\cA_1 &= \frac{1}{\e^d} \int_{\Exp_x( \e B_{\e,a})} K \left( \frac{\|u - x\|_{\R^D}}{\e} \right)  \rho (u) f(u) \dvol (u) \nonumber \\ 
			& = \frac{1}{\e^d} \int_{\e B_{\e,a}} K \left( \frac{\| \Exp_x(z) - x\|_{\R^D}}{\e} \right)  \rho( \Exp_x(z) ) f( \Exp_x(z) ) G_x (z) dz \nonumber \\
			&= \int_{B_{\e,a}} K \left( \frac{\| \Exp_x(\e z) - x\|_{\R^D}}{\e} \right)  (\widetilde{\rho f})_x (\e z) G_x (\e z) dz.
		\end{align*}
		Noting that $\|\e z\|_{\R^d} < r$ for $z \in B_{\e,a}$ and all small $a$,
		we have from Proposition~\ref{smoothing:aux:expansion} that $G_x (\e z) = 1 - \frac{\e^2}{6} z^\top \Ric_x z + \e^3 O(\|z\|^3_{\R^d}),$ and
		\begin{align*}
			K \left( \frac{ \|\Exp_{x}(\e z) - x \|_{\R^D}}{\e} \right)
			&= K(\|z\|_{\R^d}) \cdot \exp \left[-\frac{1}{2} \left( \frac{ \|\Exp_{x}(\e z) - x \|_{\R^D}^2}{\e^2} -\|z\|_{\R^d}^2 \right) \right]
			\\ &= K(\|z\|_{\R^d}) \left( 1 + \frac{\e^2}{24} \sff_x (z,z)^\top \sff_x (z,z)
			+ \e^3 O(\|z\|^5_{\R^d}) \right),
		\end{align*}
		where the implied constants in $O(\cdot)$ are $\M$-dependent.
		In the last line, we used the Taylor expansion of $g(w) = e^{-\frac{1}{2}w}$ around $0$. Moreover, the Taylor expansion of $(\widetilde{\rho f})_x$ around $0$ gives
		\begin{equation} 
			(\widetilde{\rho f})_x (\e z)
			= (\widetilde{\rho f})_x (0) + \e z^\top \nabla_{\R^d} (\widetilde{\rho f})_x (0)
			+ \frac{\e^2}{2} z^\top \left[ \nabla_{\R^d}^2 (\widetilde{\rho f})_x (0) \right] z 
			+ \e^{2+\alpha} O(\|z\|_{\R^d}^{2+\alpha}),
			\label{unif:bias:eq2}
		\end{equation}
		where the implied constant in $O(\cdot)$ depends on the $C^{2,\alpha}$-norms of $\rho$ and $f$.
		
		Combining the above, $\cA_1$ can be rewritten as
		\begin{equation*} 
			\cA_1 = \cA_{11} + \cA_{12} + \cA_{13} + \cA_{14} + \cA_{15}.
		\end{equation*}
		where
		\begin{align*}
			\cA_{11} &= (\widetilde{\rho f})_x (0) \int_{B_{\e,a}} K(\|z\|_{\R^d}) dz
			= \rho(x)f(x) \int_{B_{\e,a}} K(\|z\|_{\R^d}) dz, \\
			\cA_{12} &= \e \int_{B_{\e,a}} K(\|z\|_{\R^d}) z^{\top} \nabla_{\R^d} (\widetilde{\rho f})_x(0) dz, \\
			\cA_{13} &= \e^2 \rho(x)f(x)  \int_{B_{\e,a}} K(\|z\|_{\R^d}) \left(\frac{1}{24}
			\sff_x (z,z)^\top \sff_x (z,z) - \frac{1}{6} z^\top \Ric_x z \right)dz, \\
			\cA_{14} &= \frac{\e^2}{2} \int_{B_{\e,a}} z^\top \left[ \nabla_{\R^d}^2 (\widetilde{\rho f}_x) (0) \right] z  \, dz , \\
			\cA_{15} &=\e^{2+\alpha} \int_{B_{\e,a}} K(\|z\|_{\R^d}) O (\|z\|_{\R^d}^{2+\alpha})dz.
		\end{align*}
		
		For $\cA_{11}$, by letting $a$ sufficiently small and using Lemma~\ref{smoothing:aux:tail}, we have
		\begin{align*} 
			|\cA_{11} - \rho(x)f(x) |
			&=\left| (\widetilde{\rho f})_x (0) \int_{B_{\e,a}} K(\|z\|_{\R^d}) dz - \rho(x) f(x) \right|
			\\&= |\rho(x) f(x)| \left| \int_{B_{\e,a}} K(\|z\|_{\R^d}) dz - \int_{\R^d} K(\|z\|_{\R^d}) dz \right|
			\\&= |\rho(x) f(x)| \int_{\{z: \|z\|_{\R^d} \ge \e^{-a}\}} K(\|z\|_{\R^d}) dz
			\\&\le |\rho(x) f(x)| \cdot C(d)\, \e^{-a(d-2)} e^{-\frac{1}{2}\e^{-2a}}
			\\& = O\big(\e^{\kappa}\big)
		\end{align*}
		where $\kappa > 0$ is arbitrary. 
		By symmetry of the kernel, we see $\cA_{12}=0$.
		For $\cA_{13}$ and $\cA_{14}$, similar to $\cA_{11}$, we have
		\begin{align*}
			\left|\cA_{13} + \frac{\e^2}{2} c(x) \rho(x) f(x) \right| &= O(\e^\kappa),\\
			\left|\cA_{14} - \frac{\e^2}{2} \int_{\R^d} z^\top \left[ \nabla_{\R^d}^2 (\widetilde{\rho f}_x)
			(0) \right] z  \, dz \right| &= O(\e^\kappa),
		\end{align*}
		where
		\begin{equation*}
			c(x) = \int_{\R^d} K(\|z\|_{\R^d}) \left(\frac{1}{3} z^\top \Ric_x z -\frac{1}{12} \sff_x (z,z)^\top
			\sff_x (z,z) \right)dz. % = -2 \int_{\R^d} K(\|z\|_{\R^d}) \left(\frac{1}{24} \sff_x (z,z)^\top \sff_x (z,z) - \frac{1}{6} z^\top \Ric_x z \right)dz.
		\end{equation*}
		In particular, using Lemma \ref{smoothing:aux:quadratic},
		\begin{align*}
			\frac{\e^2}{2} \int_{\R^d} K(\|z\|_{\R^d}) z^\top \left[ \nabla_{\R^d}^2 (\widetilde{\rho f}_x) (0)
			\right] z  \, dz
			&= \frac{\e^2}{2} \tr \left[\nabla_{\R^d}^2 (\widetilde{\rho f}_x) (0) \right]
			\\&= - \frac{\e^2}{2} \Delta_\M f(x).
		\end{align*}
		Lastly, noting all moments of $K$ is bounded, we have $\cA_{15} = O(\e^{2+\alpha})$. Therefore, by choosing $a$ and $\kappa$ properly, we conclude
		\begin{equation}
			\sup_{x \in \M} \bigg| \cA_1 - \rho(x)f(x) + \frac{\e^2}{2}\Big[ c(x)\rho(x)f(x)
			+ \Delta_\M (\rho f)(x)\Big] \bigg|
			\leq C_1 \e^{2+\alpha}.
			\label{unif:bias:eq3}
		\end{equation}
		We see that $C_1$ is $\M$-dependent, and it also depends on the $C^{2,\alpha}$-norms of f and $\rho$.
		
		\myitem{Bounding $\cA_2$}	
		Note that $\Exp_x( \e B_{\e,a})$ can be rewritten as
		\begin{equation*}
			\Exp_x( \e B_{\e,a}) = \{ u \in \M: \dist_\M (u,x) < \e^{1-a} \},
		\end{equation*}
		where $\dist_\M$ denotes the geodesic distance on $\M$.
		On the other hand, by Proposition~\ref{smoothing:aux:expansion}, for $u \in \M$ with $\|u-x\|_{\R^D} < \frac{\e^{1-a}}{2}$, we have for all sufficiently small $\e$,
		$$\dist_\M (u,x) \leq 2 \| u - x \|_{\R^D} < \e^{1-a}.$$
		This implies 
		$$ \left\{u \in \M: \|u-x\|_{\R^D} < \frac{\e^{1-a}}{2} \right\} \subset \Exp_x( \e B_{\e,a}), $$
		and thus,
		$$ \M \backslash \Exp_x( \sqrt{\e} B_{\e,a}) \subset \left\{u \in \M: \|u-x\|_{\R^D} \geq \frac{\e^{1-a}}{2} \right\}. $$
		Using this and the exponential decay of $K$, for sufficiently small $a$, we have
		\begin{align*}
			\frac{1}{\e^d} \int_{\M \backslash \Exp_x( \sqrt{\e} B_{\e,a})} 
			K \left( \frac{\|u - x\|_{\R^D}}{\e} \right) \dvol (u)
			&\leq \mathrm{vol}(\M) \left( \sup_{u \in \R^D: \|u-x\|_{\R^D} \geq \frac{\e^{1-a}}{2}} \frac{1}{\e^d} K \left( \frac{\|u - x\|}{\e} \right) \right)
			\\&= \mathrm{vol}(\M) \frac{e^{-\frac{1}{8} \e^{-2a}}}{ (2\pi)^{d/2} \e^d} 
			\\&\le C'(\kappa) \e^{\kappa},
		\end{align*}
		for $\kappa > 0$ arbitrary and a constant $C'(\kappa)$.
		Therefore, by choosing $a$ and $\kappa$ properly, we obtain
		\begin{equation}
			\sup_{x \in \M} |\cA_2| \leq C_2 \e^{2+\alpha},
			\label{unif:bias:eq4}
		\end{equation}
		where $C_2$ is $\M$-dependent, and it also depends on the uniform bounds of $\rho$ and $f$.
	\end{myenumerate}
	
	Finally, we obtain \eqref{unif:bias-res1} by combining \eqref{unif:bias:eq1}, \eqref{unif:bias:eq3} and \eqref{unif:bias:eq4}, and by letting $C_0 = C_1 + C_2$.
	
	If $f \in C^2$, then the last term in \eqref{unif:bias:eq2} becomes $o(\e^2)$.
	Consequently, the right-hand side in \eqref{unif:bias:eq2} is $o(\e^2)$, and the assertion follows. This concludes the proof of \eqref{unif:bias-res1}.
\end{proof}

\begin{proof}[Proof of \eqref{unif:bias-res2}]
	We begin by proving the following lemma:

	\begin{lemma} \label{unif:bias:denom}
		Under Assumptions \ref{smoothing:assume:manifold}, \ref{smoothing:assume:density}, \ref{smoothing:assume:kernel},
		there exist constants $\e_0,C_0 > 0$ such that with $c(x)$ from \eqref{unif:bias-res1}, if $\e < \e_0$, 
		\begin{equation*}
			\frac{1}{\T_\e [1] (x)} 
			= \frac{1}{\rho(x)} + \frac{\e^2}{2\rho^2(x)} \left[ c(x) \rho(x) - \Delta_\M \rho (x) \right] + O(\e^{2+\alpha}).
		\end{equation*}
		The constants $\e_0$ and $C_0$ are $\M$-dependent, and they also depend on the $C^{2,\alpha}$-norm and the minimum of $\rho$.
	\end{lemma}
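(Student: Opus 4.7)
The plan is to derive the expansion of $1/\T_\e[1](x)$ directly by applying Theorem \ref{unif:bias} with $f \equiv 1$ and then inverting the resulting first-order expansion of $\T_\e[1](x)$. Substituting $f \equiv 1$ into \eqref{unif:bias-res1} yields, uniformly in $x \in \M$,
$$\T_\e[1](x) = \rho(x) - \frac{\e^2}{2}\bigl[c(x)\rho(x) + \Delta_\M \rho(x)\bigr] + O(\e^{2+\alpha}),$$
where the implicit constant in the remainder is $\M$-dependent and depends on the $C^{2,\alpha}$-norm of $\rho$. Define the perturbation $h_\e(x) = \T_\e[1](x) - \rho(x)$; by construction $\sup_{x\in\M} |h_\e(x)| = O(\e^2)$ as $\e \to 0$.

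The next step is to ensure that we can safely divide. Because $\rho(x) \ge \rho_{\min} > 0$ and $\sup_{x\in\M}|h_\e(x)| = O(\e^2)$, one can choose $\e_0 > 0$ small (depending on $\M$, $\rho_{\min}$, and the $C^{2,\alpha}$-norm of $\rho$) so that for $\e < \e_0$ we have $\T_\e[1](x) \ge \rho_{\min}/2$ uniformly in $x$. This guarantees that $1/\T_\e[1](x)$ is well-defined and uniformly bounded. One then applies the scalar Taylor expansion
$$\frac{1}{\rho(x) + h_\e(x)} \;=\; \frac{1}{\rho(x)} \;-\; \frac{h_\e(x)}{\rho^2(x)} \;+\; O\!\Bigl(\tfrac{h_\e(x)^2}{\rho_{\min}^{\,3}}\Bigr),$$
where the $O$-term is uniformly bounded by a constant multiple of $\sup_x h_\e(x)^2 / \rho_{\min}^{\,3} = O(\e^4)$. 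Substituting the expansion of $h_\e(x)$ from the first step and noting that $O(\e^4) = O(\e^{2+\alpha})$ whenever $\alpha \in (0,1]$, one absorbs the subdominant remainders into a single $O(\e^{2+\alpha})$ error, giving the stated expansion.

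The argument is essentially a direct consequence of the earlier bias expansion combined with a standard scalar inversion, so there is no substantive obstacle beyond bookkeeping. The main point to verify carefully is that all constants propagate correctly through the inversion so as to match the claimed dependence on $\M$, $\rho_{\min}$, and the $C^{2,\alpha}$-norm of $\rho$: the $\e_0$-threshold inherits a dependence on $\rho_{\min}$ (needed to keep $\T_\e[1](x)$ bounded away from zero), while the constant $C_0$ in the remainder picks up factors of $1/\rho_{\min}^{\,3}$ from the Taylor remainder together with the constant supplied by Theorem \ref{unif:bias}.
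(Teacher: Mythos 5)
Your proposal is correct and follows essentially the same route as the paper's proof: apply Theorem \ref{unif:bias} with $f\equiv 1$, establish the uniform lower bound $\T_\e[1](x)\ge\rho_\tmin/2$ for small $\e$, and then invert via a second-order scalar expansion whose remainder is of size $O(\e^4/\rho_\tmin^3)\subset O(\e^{2+\alpha})$. The paper phrases the inversion as the exact algebraic identity $\frac{1}{z+w}=\frac{1}{w}-\frac{z}{w^2}+\frac{z^2}{w^2(z+w)}$ rather than a Taylor remainder, but the two are the same computation with the same constant tracking.
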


	\begin{proof}[Proof of Lemma \ref{unif:bias:denom}]
		From the identity
		\begin{equation*}
			\frac{1}{z+w} = \frac{1}{w} - \frac{z}{w^2} + \frac{z^2}{w^2(z+w)},
		\end{equation*}
		substituting $z=\T_\e [1] (x) - \rho(x)$ and $w=\rho(x)$ yields
		\begin{equation}
			\frac{1}{\T_\e [1] (x)}
			= \frac{1}{\rho(x)} - \frac{\T_\e [1] (x)-\rho (x)}{\rho^2(x)}
			+ \frac{(\T_\e [1] (x)-\rho (x))^2}{\rho^2(x) \T_\e[1] (x)}.
			\label{unif:bias:den-eq1}
		\end{equation}
		
		By $\eqref{unif:bias-res1}$ with $f \equiv 1$, there exist positive constants $\e_1$ and $C_1$ such that for $\e < \e_1$, we have
		\begin{equation} 
			\sup_{x \in \M} \bigg| \T_{\e} [1] (x) - \rho(x)
			+ \frac{\e^2}{2}\Big[ c(x)\rho(x) + \Delta_\M \rho (x)\Big] \bigg|
			\leq C_1 \e^{2+\alpha}.
			\label{unif:bias:den-eq2}
		\end{equation}
		This gives
		\begin{equation}
			\sup_{x \in \M} \left| - \frac{\T_\e [1] (x)-\rho (x)}{\rho^2(x)}
			- \frac{\e^2}{\rho^2(x)}  [c(x)\rho(x) - \Delta_\M \rho (x)] \right| 
			\leq \frac{C_1 \e^{2+\alpha}}{\rho^2_\tmin},	
			\label{unif:bias:den-eq3}
		\end{equation}
		where $\rho_\tmin$ denotes the minimum of $\rho$. On the other hand, by letting $\e_1$ sufficiently small, we have from \eqref{unif:bias:den-eq2},
		\begin{align}
			\sup_{x \in \M} \left| \T_{\e} [1] (x) - \rho(x) \right| \leq \min \left\{ C_2 \e^2,
			\frac{\rho_\tmin}{2}  \right\},
		\end{align}
		where $C_2 >0$ is a constant.
		Then, it follows that
		\begin{equation*}
			\sup_{x \in \M} (\T_\e [1] (x)-\rho (x))^2 \leq C_2^2 \e^4,
			\quad
			\sup_{x \in \M} \frac{1}{\T_\e[1](x)} \leq \frac{2}{\rho_\tmin},
		\end{equation*}
		and hence,
		\begin{equation}
			\sup_{x \in \M} \left| \frac{(\T_\e [1] (x)-\rho (x))^2}{\rho^2(x) \T_\e[1] (x)} \right| 
			\leq \frac{C_2^2 \e^4}{\rho_\tmin^3}.
			\label{unif:bias:den-eq4}
		\end{equation}
		
		Therefore, we complete the proof by combining \eqref{unif:bias:den-eq1}, \eqref{unif:bias:den-eq3} and \eqref{unif:bias:den-eq4}, and by adjusting $\e_0$ and $C_0$ appropriately (which may depend on $\e_1$, $C_1$ and $\rho_\tmin$).
	\end{proof}

	We now turn to the proof of \eqref{unif:bias-res2}. From \eqref{unif:bias-res1} and Lemma \ref{unif:bias:denom}, for all sufficiently small $\e$, we have
	\begin{align*}
		\T_\e [f] (x) &= \rho(x) f(x) -\frac{\e^2}{2}\left[ c(x) \rho(x) f(x) + \Delta_\M (\rho f ) (x) \right] + O(\e^{2+\alpha}), \\
		\frac{1}{\T_\e [1] (x)} &= \frac{1}{\rho(x)} + \frac{\e^2}{2\rho^2(x)} \left[ c(x) \rho(x) - \Delta_\M \rho (x) \right] + O(\e^{2+\alpha}).
	\end{align*}
	Combining these yields
	\begin{align}
		\TT_{\e} [f] (x)
		&= f(x) -\frac{\e^2}{2\rho(x)}\left[ c(x) \rho(x) f(x) - \Delta_\M (\rho f ) (x) \right]
		+ \frac{\e^2 f(x)}{2\rho(x)} \left[ c(x) \rho(x) - \Delta_\M \rho (x) \right] + O(\e^{2+\alpha})
		\notag\\&= f(x) +\frac{\e^2}{2}\cdot \frac{\Delta_\M (\rho f ) (x) - f(x) \Delta_\M \rho (x) }{\rho(x)} + O(\e^{2+\alpha}).
		\label{unif:bias-res2:eq1}
	\end{align}
	
	Here, the product rule for the Laplace-Beltrami operator gives
	\begin{align*}
		\Delta_\M (\rho f ) - f \Delta_\M \rho
		&= \rho \Delta_\M f + f \Delta_\M \rho -2 \langle \nabla_\M \rho, \nabla_\M f \rangle_{T_x \M} - f \Delta_\M \rho
		\\&= \rho \Delta_\M f -2 \langle \nabla_\M \rho, \nabla_\M f \rangle_{T\M},
	\end{align*}
	where $\langle \cdot, \cdot \rangle_{T \M}$ denotes the Riemannian inner product.
	On the other hand, we can rewrite the weighted Laplace-Beltrami operator as
	\begin{align*}
		\Delta_{\M,2} f 
		= -\frac{1}{\rho^2}\div_\M (\rho^2 \nabla_\M f) 
		= \frac{\rho^2 \Delta_\M f -2 \rho \langle \nabla_\M \rho, \nabla_\M f \rangle_{T \M} }{\rho^2}.
	\end{align*}
	This yields
	\begin{equation}
		\frac{\Delta_\M (\rho f ) - f \Delta_\M \rho }{\rho}
		= \frac{\rho^2 \Delta_\M f -2 \rho \langle \nabla_\M \rho, \nabla_\M f \rangle_{T\M}}{\rho^2}
		= \Delta_{\M,2} f.
		\label{unif:bias-res2:eq2}
	\end{equation}
	Therefore, substituting \eqref{unif:bias-res2:eq2} into \eqref{unif:bias-res2:eq1} completes the proof.
\end{proof}

% ===== Uniform convergence: stochastic part =====
\subsection{Proof of Theorem \ref{unif:sto}}

Hereafter, for a bounded real-valued function $f$, we denote by $\|f\|_\infty$ the sup-norm of $f$.
The sup-norms of gradients and Hessians is the maximum sup-norm of their components.

First observe that with $\msF_\e = \left\{ g_x(\cdot) = K \left( \frac{ \|\cdot - x\|_{\R^D} }{\e} \right) f(\cdot) : x \in \M\right\},$ we can write
\begin{equation}
	\sup_{x \in \M}\big|\T_{n,\e} [f] (x) - \T_{\e} [f] (x)  \big|
	= \frac{1}{\e^{d}} \sup_{g \in \msF_\e} 
	\left| \frac{1}{n} \sum_{i=1}^n g(X_i) - \E_\P g(X) \right|,
	\label{unif:sto-eq1}
\end{equation}
This point of view enables us to use empirical process theory.
To this end, we use some facts about Vapnik–Chervonenkis (VC) type classes:

\begin{definition}[Uniformly bounded VC type class]
	A class $\mathscr{G}$ of measurable functions on $\M$ is called \emph{uniformly bounded VC type class} on $\M$ 
	if there exist positive constants $A,B$ and $\nu$ satisfying the following:
	\begin{enumerate}[label=(\alph*)]
		\item $\displaystyle \sup_{g \in \mathscr{G}} ||g||_\infty \leq B.$
		\item For any probability measure $\Q$ on $\M$ and any $\eta \in (0, B)$, the covering number $\mathscr{N} ( \mathscr{G}, L^2 (\Q), \eta )$, the minimal number of $L^2(\Q)$-balls needed to cover ${\mathscr G}$, satisfies 
		$$ \mathscr{N} ( \mathscr{G}, L^2 (\Q), \eta ) \leq \left( \frac{AB}{\eta} \right)^\nu \text{.}$$
	\end{enumerate}
	Below we refer to the constants $(\nu, A, B)$ as the {\em characteristics} (or {\em VC characteristics}) of $\mathscr G$.
\end{definition}

\begin{proposition} \label{smoothing:aux:VC}
	Suppose Assumptions \ref{smoothing:assume:manifold} and \ref{smoothing:assume:density} hold.
	Let $\mathscr{G} = (g_x)_{x \in \M}$ be a uniformly bounded VC type class on $\M$ with characteristics $(\nu,A,B).$ %such that $x \mapsto g_x (u)$ is continuous for all $u \in \M$.
	Let $\sigma>0$ with $\sup_{g \in \msF} \mathbb{E}_\P g^2 (X) \leq \sigma^2$.
	Then, there exists a universal constant $C > 0$ not depending on any parameters such that with probability at least $1-\delta$,
	\begin{multline}
		\sup _{g \in \mathscr{G}}\left|\frac{1}{n} \sum_{i=1}^n g\left(X_i\right)-\E_\P g(X)\right|
		\leq C \Bigg(
		\sqrt{\frac{\nu \sigma^2}{n} \log \left(\frac{2 A B}{\sigma}\right)}
		+\sqrt{\frac{\sigma^2 \log (1/\delta)}{n}}
		\\ +\frac{\nu B}{n} \log \left(\frac{2 A B}{\sigma}\right)
		+ \frac{B \log(1/\delta)}{n} \Bigg).    
	\end{multline}
\end{proposition}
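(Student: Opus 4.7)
The plan is to combine a chaining bound for the expected supremum with a Talagrand-type concentration inequality, both of which are standard tools for empirical processes indexed by VC-type classes. Since the proposition does not depend on the manifold structure in any essential way (only on the uniformly bounded VC-type assumption), the argument follows the template of Giné--Guillou and Einmahl--Mason, refined in Chernozhukov--Chetverikov--Kato.

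First I would control the expectation $E := \E \sup_{g \in \mathscr{G}} |\frac{1}{n}\sum_{i=1}^n g(X_i) - \E_\P g(X)|$. By symmetrization this is bounded, up to a factor of $2$, by the Rademacher process $\E \sup_{g \in \mathscr{G}} |\frac{1}{n}\sum_{i=1}^n \epsilon_i g(X_i)|$, where $\epsilon_i$ are i.i.d.\ Rademacher variables independent of the $X_i$. Conditionally on $(X_i)$, the Rademacher process is sub-Gaussian with respect to the $L^2(\P_n)$ pseudo-metric, so Dudley's entropy integral gives
\begin{equation*}
	\E_\epsilon \sup_{g \in \mathscr{G}} \Big|\tfrac{1}{n}\sum_{i=1}^n \epsilon_i g(X_i)\Big|
	\lesssim \frac{1}{\sqrt{n}} \int_0^{\hat{\sigma}} \sqrt{\log \mathscr{N}(\mathscr{G}, L^2(\P_n), \eta)}\, d\eta,
\end{equation*}
where $\hat{\sigma}^2 = \sup_{g} \P_n g^2$. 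Plugging in the VC bound $\mathscr{N}(\mathscr{G}, L^2(\P_n), \eta) \leq (AB/\eta)^\nu$ and integrating yields $C \sqrt{\nu \hat{\sigma}^2 / n} \cdot \sqrt{\log(AB/\hat\sigma)}$. Taking expectations in $\hat{\sigma}$ and controlling $\E \hat\sigma^2$ by $\sigma^2$ plus a lower-order term (via a separate application of symmetrization and contraction to $g^2$, as in Lemma~2.1 of Giné--Guillou) produces the bound
\begin{equation*}
	E \lesssim \sqrt{\frac{\nu \sigma^2}{n} \log\!\left(\tfrac{2AB}{\sigma}\right)} + \frac{\nu B}{n} \log\!\left(\tfrac{2AB}{\sigma}\right).
\end{equation*}

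Next I would upgrade this expectation bound into a high-probability bound via Talagrand's inequality in Bousquet's form: for any $t > 0$,
\begin{equation*}
	\P\!\left(\sup_{g \in \mathscr{G}} \Big|\tfrac{1}{n}\sum_{i=1}^n g(X_i) - \E_\P g(X)\Big| \geq E + \sqrt{\tfrac{2t(\sigma^2 + 2BE)}{n}} + \tfrac{tB}{3n}\right) \leq e^{-t}.
\end{equation*}
Setting $t = \log(1/\delta)$, bounding $\sqrt{\sigma^2 + 2BE}$ by $\sigma + \sqrt{2BE}$, and absorbing the cross term $\sqrt{BE \log(1/\delta)/n}$ into the other four terms via $2\sqrt{ab} \leq a + b$ yields exactly the four summands in the statement.

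The only slightly delicate step is controlling the random envelope $\hat{\sigma}^2$ in the chaining argument: one must avoid the circularity of expressing the expectation bound in terms of a random quantity. The standard workaround is to invoke the Talagrand inequality once at a coarse scale to replace $\hat\sigma^2$ by $\sigma^2$ up to a lower-order correction, or, more cleanly, to use the moment inequality of Einmahl--Mason (Proposition~A.1 in their 2000 paper) which directly gives the expectation bound in terms of the deterministic $\sigma^2$. Apart from this bookkeeping, the argument is by now routine, and the result itself is essentially Proposition~2.1 of \cite{gine2006empirical} or Theorem~5.1 of Chernozhukov--Chetverikov--Kato (2014); I would cite one of these for the complete derivation.
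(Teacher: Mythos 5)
The paper does not actually give a proof of this proposition: it cites Theorem~30 in the supplementary material of Kim et al.\ (2019, \texttt{kim2019uniform}) and observes that only the domain ($\M$ rather than $\R^d$) differs, which is immaterial since the argument uses only the VC-type covering bound and the $L^\infty$ and $L^2(\P)$ envelopes, not any structure of the index set. Your sketch --- symmetrization, Dudley's entropy integral with the $(AB/\eta)^\nu$ covering bound, handling the random envelope $\hat\sigma$ via a secondary symmetrization-and-contraction argument, and then Bousquet's form of Talagrand's inequality with $t=\log(1/\delta)$ followed by $2\sqrt{ab}\le a+b$ to separate the four summands --- is precisely the standard derivation that underlies the cited result, and you correctly flag the one genuinely delicate point (avoiding circularity in replacing $\hat\sigma$ by $\sigma$). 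So your argument is correct and is, at the level of mathematical content, the same proof the paper is invoking by reference; the only difference is that you spell it out rather than delegate to \texttt{kim2019uniform}.
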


This proposition is a slightly modified version of Theorem 30 in the supplement material of \cite{kim2019uniform}.
The only difference is that in our case, the functions in $\msF_\e$ are defined on $\M$, while \cite{kim2019uniform} considers functions defined on $\R^d$.
The proof is identical. 

The following lemma is needed to control the covering number and the $L^2$-type bounds for functions in $\msF_\e$.

\begin{lemma}[Lemma 4.2 of \cite{loubes2008kernel}] \label{smoothing:aux:Mcovering}
	Under Assumption \ref{smoothing:assume:manifold},
	there exist $\M$-dependent constants $\eta_\M$ and $A_\M$ such that
	if $\eta < \eta_\M$, we have
	\begin{equation}
		\mathscr{N} \left( \M, \dist_\M, \eta \right) \leq \left( \frac{A_\M}{\eta} \right)^{d}.
	\end{equation}	
\end{lemma}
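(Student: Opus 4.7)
The plan is to use the classical volume-comparison argument: bound the covering number by a maximal packing number, then bound the packing number via the ratio of the volume of $\M$ to the volume of a small geodesic ball.

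First I would use compactness of $\M$ to set $\eta_\M$ smaller than a positive multiple of the injectivity radius, so that for every $x \in \M$ and every $\eta < \eta_\M$ the geodesic ball $B_\M(x,\eta) = \{y \in \M : \dist_\M(x,y) < \eta\}$ is the diffeomorphic image under $\Exp_x$ of the Euclidean ball $B_{\R^d}(0,\eta)$. Then, by expressing volumes in normal coordinates and applying Proposition~\ref{smoothing:aux:expansion}(a), I would write
\begin{equation*}
\mathrm{vol}_\M\bigl(B_\M(x,\eta)\bigr) = \int_{B_{\R^d}(0,\eta)} G_x(z)\,dz,
\qquad G_x(z) = 1 - \tfrac{1}{6} z^\top \Ric_x z + R_3(z) + \cdots.
\end{equation*}
Since the implied constants in the $R_k$-terms are uniformly bounded in $x \in \M$ by an $\M$-dependent constant, one may further shrink $\eta_\M$ (depending only on the curvature bounds and the dimension) so that $G_x(z) \geq 1/2$ on $B_{\R^d}(0,\eta)$ uniformly in $x$. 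This yields the uniform lower bound
\begin{equation*}
\mathrm{vol}_\M\bigl(B_\M(x,\eta)\bigr) \geq \tfrac{1}{2} \omega_d \eta^d,
\end{equation*}
where $\omega_d$ denotes the volume of the Euclidean unit ball in $\R^d$.

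Next I would apply a standard maximal packing argument. Let $\{x_1,\dots,x_N\}$ be a maximal $\eta$-separated subset of $\M$ with respect to $\dist_\M$. The open geodesic balls $B_\M(x_i,\eta/2)$ are pairwise disjoint by the triangle inequality, so
\begin{equation*}
N \cdot \tfrac{1}{2} \omega_d (\eta/2)^d \leq \mathrm{vol}_\M(\M),
\end{equation*}
giving $N \leq (A_\M/\eta)^d$ for $A_\M = 2 \bigl(2\,\mathrm{vol}_\M(\M)/\omega_d\bigr)^{1/d}$, which is $\M$-dependent. By maximality of the packing, every $y \in \M$ is within geodesic distance $\eta$ of some $x_i$, so $\{x_1,\dots,x_N\}$ is an $\eta$-covering of $\M$ and the desired bound on $\mathscr{N}(\M,\dist_\M,\eta)$ follows.

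The main obstacle is the uniform volume lower bound, since it must hold simultaneously at every $x \in \M$; this is exactly where the compactness assumption is used, as it provides uniform bounds on the Ricci curvature tensor and the remainder terms in the expansion of $G_x$, making the threshold $\eta_\M$ and the constant $A_\M$ depend only on $\M$ through its $\M$-dependent geometric quantities.
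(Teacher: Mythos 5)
The paper does not prove this lemma itself — it cites it directly as Lemma 4.2 of \cite{loubes2008kernel}, so there is no in-paper proof to compare against. Your reconstruction is correct: it is the standard volume-comparison argument (uniform lower bound on the volume of geodesic balls via normal coordinates and Proposition~\ref{smoothing:aux:expansion}(a), followed by the maximal $\eta$-packing-implies-$\eta$-covering step), and this is essentially the route taken in the cited reference. One minor point worth making explicit: you apply the volume lower bound to balls of radius $\eta/2$, not $\eta$, which is legitimate since $\eta/2 < \eta_\M$ whenever $\eta < \eta_\M$, but you may wish to state it so that the reader does not wonder whether the bound was derived at the wrong radius.
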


Equipped with these, we first prove \eqref{unif:sto-res1}.

\begin{proof}[Proof of \eqref{unif:sto-res1}]
	By definition of $\msF_\e$,
	\begin{equation} 
		\sup_{g \in \msF_\e} \|g\|_\infty 
		= \sup_{x \in \M} \left\| K\left(\frac{ \| \cdot - x \|_{\R^D}}{\e}\right) f(\cdot) \right\|_\infty
		\leq B: =  \|f\|_\infty.
		\label{unif:sto-res1-eq1}
	\end{equation}
	
	Next, by following the same ideas as in the proof of Theorem~\ref{unif:bias}, 
	we can choose $\e_1$, $C_1 > 0$ such that for $\e < \e_1$, we have
	\begin{equation}
		\sup_{g \in \msF_\e} \E_\P g^2(X)
		\leq \|f\|_\infty^2 \sup_{x \in \M} \E_\P K \left( \frac{\|X-x\|_{\R^D}}{\e} \right)
		\leq \sigma^2 := C_1 \|f\|_\infty^2 \| \rho \|_\infty \e^d,
		\label{unif:sto-res1-eq2}
	\end{equation}
	where $\e_1$ and $C_1$ are $\M$-dependent.
	
	Lastly, since $K\left(\| \cdot - x \|_{\R^D} \right)$ is $1$-Lipschitz, we have for $x,x',u \in \M$,
	\begin{align*}
		\left| g_x(u) - g_{x'} (u) \right| 
		\leq \frac{\|f\|_\infty}{\e} \|x - x'\|_{\R^D}
		\leq \frac{\|f\|_\infty}{\e} \dist_\M (x,x').
	\end{align*}
	So, for any probability measure $\Q$ on $\M$, the $L^2(\Q)$ distance between $g_x$ and $g_{x'}$ is upper bounded by
	\begin{align*}
		\left\| g_x - g_{x'} \right\|_{L^2 (\Q)}
		&= \sqrt{\E_\Q \left| g_x (X) - g_{x'} (X) \right|^2 }
		\leq \frac{\|f\|_\infty}{\e} \dist_\M (x,x').
	\end{align*}
	Therefore, by using Lemma \ref{smoothing:aux:Mcovering}, if 
	$\eta < \frac{\|f\|_\infty}{\e} \eta_\M$,
	the covering number $\mathscr{N} \left( \msF_\e, L^2 (\Q), \eta \right)$ is upper bounded by
	\begin{equation}
		\sup_{\Q} \mathscr{N} \big( \msF_\e, L^2(\Q),\eta\big)	\leq \mathscr{N} \left( \M, \dist_\M, \frac{\e}{\|f\|_\infty} \eta \right) 
		\leq \left(\frac{A_\M \frac{\|f\|_\infty}{\e}}{\eta} \right)^d.
		\label{unif:sto-res1-eq3}
	\end{equation}
	
	By combining \eqref{unif:sto-res1-eq1} and \eqref{unif:sto-res1-eq3}, it follows that $\msF_\e$ is a VC-class with triple $(\nu,A,B)$, where $\nu = d$, $A = \frac{A_\M}{\e}$ and $B = \|f\|_\infty$, provided $\e \le \eta_\M.$ 
	
	Putting everything together, it follows from \eqref{unif:sto-res1-eq2} and Proposition~\ref{smoothing:aux:VC} that if $\e < \e_0:= \min(\e_1,\eta_\M)$, then  with probability at least $1-\delta$, we have
	\begin{align*}
		&\quad \sup_{x \in \M} \left| \T_{n,\e} [f] (x) - \T_{\e} [f] (x)  \right|
		\\&\leq 
		\frac{C}{\e^d} \Bigg(
		\sqrt{\frac{C_1 \|f\|_\infty \|\rho\|_\infty \e^d}{n} \log \left( \frac{2 A_\M \frac{\|f\|_\infty}{\e}  }{\sqrt{C_1 \|\rho\|_\infty \e^d}} \right)}
		+ \sqrt{\frac{C_1 \|f\|_\infty \|\rho\|_\infty\e^d \log (1/\delta)}{n}}\\
		&\hspace{16em}+ \frac{d \|f\|_\infty}{n} \log \left( \frac{2 A_\M \frac{\|f\|_\infty}{\e}   }{\sqrt{C_1 \|\rho\|_\infty \e^d}} \right)
		+ \frac{\|f\|_\infty \log(1/\delta)}{n}
		\Bigg)
		\\&=  
		C_0 \Bigg(
		\sqrt{\frac{\log (1/\e)}{n\e^d} }
		+ \sqrt{\frac{\log (1/\delta)}{n\e^d}}
		+ \frac{\log (1/\e)}{n \e^d}
		+ \frac{\log(1/\delta)}{n\e^d}
		\Bigg)
	\end{align*}
	for an appropriate choice of $C_0$. This implies \eqref{unif:sto-res1}.
\end{proof}

Before proving \eqref{unif:sto-res2}, we present some heuristic arguments to see where the faster rate of convergence in the normalized case originates.

From $f(u) = f(x) + \e \cdot \frac{f(u) - f(x)}{\e}$, we can express $\T_{n,\e}[f](x)$ as
\begin{equation*}
	\T_{n,\e}[f](x) = f(x) \, \T_{n,\e} [1] (x) + \e \, \bfS_{n,\e}[f](x),
\end{equation*}
where
\begin{equation}
	\bfS_{n,\e}[f](x) = \frac{1}{n\e^d} \sum_{i=1}^n K \left( \frac{\|X_i - x\|_{\R^D}}{\e} \right) \frac{f(X_i)-f(x)}{\e}.
\end{equation}
The first term does not involve the variability of the function $f$, but rather that of the density $\rho$ through $\T_{n,\e} [1] (x)$.
For the second term, since the kernel localizes the sum about $x$ within the bandwidth $\e$, the increment $f(X_i)-f(x)$ scaled by $\e$ behaves like the gradient of $f$ at $x$.
Consequently, $\bfS_{n,\e}[f](x)$ acts like the unnormalized kernel smoothing of a gradient-like quantity and therefore it has the same variance order as the unnormalized kernel smoothing.
Taking this together with the multiplicative factor $\e$ in front, the variability of the second term is expected to be smaller than that of the first.

On the other hand, for $\TT_{n,\e}[f](x)$, using the above decomposition yields
\begin{align*}
	\TT_{n,\e}[f](x) = \frac{\T_{n,\e}[f](x)}{\T_{n,\e}[1](x)} = f(x) + \frac{\e \, \bfS_{n,\e}[f](x)}{\T_{n,\e}[1](x)}.
\end{align*}
We see that the first term is deterministic, since the stochastic term $\T_{n,\e} [1] (x)$ is removed by normalization.
Hence, the variability of $\TT_{n,\e}[f](x)$ arises solely from the second term, which has a smaller variance order than that of $\T_{n,\e}[f](x)$, when ignoring the denominator $\T_{n,\e}[1] (x)$.

We now make this heuristic precise. To this end, we first prove two lemmas; the first provides a uniform bound for the denominator $\T_{n,\e}[1] (x)$, and the second establishes the uniform convergence of the second moment of $\bfS_{n,\e}[f](x)$.

\begin{lemma} \label{unif:sto:denom}
	Under Assumptions \ref{smoothing:assume:manifold}, \ref{smoothing:assume:density}, \ref{smoothing:assume:kernel},
	there exists positive constants $\e_0^*$ and $C_0^*$ such that if
	\begin{equation}
		C_0^*\left(\frac{\log n \vee \log(1/\delta)}{n}\right)^{1/d} < \e < \e_0^*,
		\label{unif:sto:denom-eq2}
	\end{equation}
	we have with probability at least $1-\delta$, 
	\begin{equation}
		\inf_{x \in \M} \T_{n,\e} [1] (x) \ge  \frac{1}{2} \inf_{x\in\M}\rho(x),
	\end{equation}
	where $\e_0^*$ and $C_0^*$ are $\M$-dependent, and they also depend on $\rho_\tmin$ and the $C^2$-norm of $\rho$.
\end{lemma}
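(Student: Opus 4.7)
The plan is to combine the bias control from Theorem~\ref{unif:bias} applied to $f\equiv 1$ with the stochastic control from Theorem~\ref{unif:sto} (specifically \eqref{unif:sto-res1}) also applied to $f\equiv 1$, and then use the triangle inequality
\[
\T_{n,\e}[1](x) \;\ge\; \rho(x) \;-\; \bigl|\T_\e[1](x)-\rho(x)\bigr| \;-\; \bigl|\T_{n,\e}[1](x)-\T_\e[1](x)\bigr|.
\]
Since $\inf_{x\in\M}\rho(x)=\rho_\tmin>0$, it suffices to show that for a suitable choice of $\e_0^*$ and $C_0^*$, each of the two error terms is uniformly bounded by $\rho_\tmin/4$ on the event of probability at least $1-\delta$.

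For the bias term, Theorem~\ref{unif:bias} with $f\equiv 1$ gives constants $\e_1,C_1>0$ (both $\M$-dependent and depending on the $C^2$-norm of $\rho$) such that $\sup_{x\in\M}|\T_\e[1](x)-\rho(x)|\le C_1\e^2$ whenever $\e<\e_1$. Thus I first choose $\e_0^*\le \e_1$ small enough that $C_1(\e_0^*)^2\le \rho_\tmin/4$; this handles the bias piece.

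For the stochastic term, Theorem~\ref{unif:sto}\,\eqref{unif:sto-res1} yields, on an event of probability at least $1-\delta$,
\[
\sup_{x\in\M}\bigl|\T_{n,\e}[1](x)-\T_\e[1](x)\bigr|\le C_2\Bigl(\sqrt{\tfrac{\log(1/(\e\wedge\delta))}{n\e^d}}+\tfrac{\log(1/(\e\wedge\delta))}{n\e^d}\Bigr).
\]
The key observation is that the lower bound $\e>C_0^*\bigl(\tfrac{\log n\vee\log(1/\delta)}{n}\bigr)^{1/d}$ forces $n\e^d\ge (C_0^*)^d\bigl(\log n\vee\log(1/\delta)\bigr)$. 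To bound $\log(1/(\e\wedge\delta))$ by a constant multiple of $\log n\vee\log(1/\delta)$, I consider the two cases: if $\e\ge\delta$, then $\log(1/(\e\wedge\delta))=\log(1/\delta)$; if $\e<\delta$, then $\log(1/(\e\wedge\delta))=\log(1/\e)\le \tfrac{1}{d}\log n + \log(1/C_0^*) + O(\log\log n)$, which is $\lesssim \log n$ for $n$ large (absorbing the constant into $C_0^*$). In either case
\[
\tfrac{\log(1/(\e\wedge\delta))}{n\e^d}\;\le\;\tfrac{C_3}{(C_0^*)^d}
\]
for an absolute constant $C_3$. Choosing $C_0^*$ sufficiently large (depending on $C_2$, $C_3$ and $\rho_\tmin$) then forces both $\sqrt{\cdot}$ and $\cdot$ terms above to be at most $\rho_\tmin/8$, so their sum is at most $\rho_\tmin/4$.

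Combining the two bounds via the triangle inequality gives $\inf_{x\in\M}\T_{n,\e}[1](x)\ge \rho_\tmin - \rho_\tmin/4 - \rho_\tmin/4 = \rho_\tmin/2$, as required. The only mildly delicate step is the bookkeeping in the case $\e<\delta$: one must verify that the lower bound on $\e$ really does translate the $\log(1/\e)$ back into a quantity of order $\log n$, which is why the threshold $C_0^*$ depends on $\rho_\tmin$, $C^2$-norm of $\rho$, and $\M$. Everything else is a routine combination of Theorems~\ref{unif:bias} and~\ref{unif:sto}.
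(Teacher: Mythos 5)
Your proof is correct and follows essentially the same route as the paper: combine the bias bound from Theorem~\ref{unif:bias} with the stochastic bound from \eqref{unif:sto-res1}, use the triangle inequality, shrink $\e_0^*$ to kill the bias, and observe that the lower bound on $\e$ forces $\log(1/(\e\wedge\delta))/(n\e^d)\lesssim (C_0^*)^{-d}$, which is then made small by taking $C_0^*$ large. The paper's bookkeeping in the final step is marginally cleaner (it directly notes that the assumed lower bound on $\e$ gives $\log(1/\e)\le\log n$, so $\log(1/(\e\wedge\delta))\le\log n\vee\log(1/\delta)$, with no need for a residual $O(\log\log n)$ term or a ``for $n$ large'' qualifier), but this is cosmetic and your argument closes the same way.
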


\begin{proof}[Proof of Lemma \ref{unif:sto:denom}]
	From \eqref{unif:bias-res1} and \eqref{unif:sto-res1},
	there exist $\e_1, C_1 > 0$ such that whenever $\e < \e_1$, we have with probability at least $1-\delta$,
	\begin{equation}
		\sup_{x \in \M} \left| \T_{n, \e} [1] (x) - \rho(x) \right|
		\leq 
		C_1 \left( \e^2 + 
		\sqrt{ \frac{\log(1/(\e\wedge\delta))}{n\e^{d}} }  
		+ \frac{\log(1/(\e\wedge \delta))}{n \e^{d  }} \right).
		\label{unif:sto:denom-eq1}
	\end{equation}
	So if $n,\e$ and $\delta$ are such that the right-hand side is smaller than $\inf_{x\in\M} \rho(x)/2$, then we can conclude that $\inf_{x\in\M} \T_{n,\e}[1](x) \ge \inf_{x\in\M} \rho(x)/2$ for otherwise there would be a contradiction.
	Thus, all we need is to assure that under our assumption, the right-hand side is small enough. 
	
	To see this, first observe that for $\e_0^*$ small enough, we have $C_1\e^2 < C_1(\e_0^*)^2 \le \inf_{x\in\M} \rho(x)/4.$ As for the second and third terms on the right-hand side of \eqref{unif:sto:denom-eq1}, notice that if $\e,\delta > 0$ are such that $ C_0^* \left(\frac{ \log (1/(\e\wedge\delta))}{n}\right)^{1/d} < \e < \e_0^*$, then
	$$ C_1 \left( \sqrt{\frac{\log (1/(\e\wedge\delta))}{n \e^{d}}} + \frac{\log (1/(\e\wedge\delta)) }{n \e^{d}} \right)
	\leq C_1 \left( \sqrt{ \Big(\frac{1}{C_0^*}\Big)^d } + \Big(\frac{1}{C_0^*}\Big)^d \right),$$
	and by appropriate choice of $C_0^*$, we can make the right-hand side smaller than $\inf_{x\in\M} \rho(x)/4$. It remains to observe that assumption \eqref{unif:sto:denom-eq2} implies that $\e \ge \frac{1}{n^{1/d}}$,  so that $\log \frac{1}{\e} \le \frac{1}{d} \log n \le \log n.$ 
	In other words, assumption \eqref{unif:sto:denom-eq2} implies the condition  $C_0^*\left(\frac{ \log (1/(\e\wedge\delta)}{n}\right)^{1/d} < \e$. This completes the proof.
\end{proof}

\begin{lemma} \label{unif:sto:var}
	Suppose Assumptions \ref{smoothing:assume:manifold}, \ref{smoothing:assume:density}, \ref{smoothing:assume:kernel} hold
	and $f \in C^2(\M)$.
	Then, there exist $\e_0, C_0$ such that $\e < \e_0$ implies
	\begin{align}
		\sup_{x \in \M} \left| \frac{1}{\e^d} \int_\M K^2 \left( \frac{\|u-x\|_{\R^D}}{\e} \right) \left(\frac{f(u)-f(x)}{\e} \right)^2 d\P(u) - \frac{\rho(x) \|\nabla_\M f (x)\|^2 }{2(4\pi)^{d/2}}\right|
		& \leq C_0 \e.
	\end{align}
	Here, the constants $\e_0$ and $C_0$ are $\M$-dependent, and they also depend on the $C^2$-norm $f$ and $\rho$.
\end{lemma}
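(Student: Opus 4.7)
The plan is to mimic the normal-coordinate Taylor expansion argument used in the proof of Theorem~\ref{unif:bias}, but applied to $K^2$ together with the squared increment $\big((f(u)-f(x))/\e\big)^2$. First, fix $0<a<1$ small, set $B_{\e,a}=\{z\in\R^d:\|z\|_{\R^d}<\e^{-a}\}$, and split the integral into the piece over $\Exp_x(\e B_{\e,a})$ and its complement in $\M$. The complement piece is handled exactly as the term $\cA_2$ in the proof of \eqref{unif:bias-res1}: Proposition~\ref{smoothing:aux:expansion}(c) ensures Euclidean and geodesic distances are comparable, so $\|u-x\|_{\R^D}\ge \tfrac{1}{2}\e^{1-a}$ outside the coordinate ball, making $K^2$ decay faster than any polynomial in $\e$. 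Since $(f(u)-f(x))^2/\e^2$ is uniformly bounded by $(2\|f\|_\infty/\e)^2$, the tail contribution is $O(\e^\kappa)$ for any $\kappa>0$.

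On the main piece, change variables via $u=\Exp_x(\e z)$, so the integrand becomes
\begin{equation*}
K^2\!\left(\tfrac{\|\Exp_x(\e z)-x\|_{\R^D}}{\e}\right)\Big(\tfrac{\tilde f_x(\e z)-\tilde f_x(0)}{\e}\Big)^2\,\rho(\Exp_x(\e z))\,G_x(\e z),
\end{equation*}
integrated over $B_{\e,a}$. Using Proposition~\ref{smoothing:aux:expansion}, I expand
\begin{align*}
K^2\!\left(\tfrac{\|\Exp_x(\e z)-x\|_{\R^D}}{\e}\right) &= K^2(\|z\|_{\R^d})\big(1+\e^2 O(\|z\|_{\R^d}^4)\big),\\
G_x(\e z) &= 1+\e^2 O(\|z\|_{\R^d}^2),\\
\rho(\Exp_x(\e z)) &= \rho(x)+\e\,z^\top\nabla_{\R^d}\tilde\rho_x(0)+\e^2 O(\|z\|_{\R^d}^2),\\
\tfrac{\tilde f_x(\e z)-\tilde f_x(0)}{\e} &= z^\top\nabla_{\R^d}\tilde f_x(0)+\tfrac{\e}{2}z^\top[\nabla_{\R^d}^2\tilde f_x(0)]z+\e^2 O(\|z\|_{\R^d}^3),
\end{align*}
with implicit constants that are $\M$-dependent and controlled by the $C^2$-norms of $f$ and $\rho$. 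Squaring the last line yields $(z^\top\nabla_{\R^d}\tilde f_x(0))^2$ plus a cross term which is odd in $z$ and of order $\e$, plus terms of order $\e^2$. Multiplying everything out and integrating, the leading term is
\begin{equation*}
\rho(x)\int_{B_{\e,a}} K^2(\|z\|_{\R^d})\,(z^\top\nabla_{\R^d}\tilde f_x(0))^2\,dz.
\end{equation*}
All terms arising from the next order of any one of the four factors contribute either an odd function of $z$ (integrating to $0$ against the symmetric $K^2$) or carry an explicit $\e$; after truncating back to $\R^d$ using Lemma~\ref{smoothing:aux:tail}, these give a remainder bounded by $C_0\e$ uniformly in $x$.

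Finally, extending $B_{\e,a}$ to $\R^d$ (again by Lemma~\ref{smoothing:aux:tail}) and applying Lemma~\ref{smoothing:aux:quadratic} with the rank-one matrix $A=\nabla_{\R^d}\tilde f_x(0)\,\nabla_{\R^d}\tilde f_x(0)^\top$ gives
\begin{equation*}
\rho(x)\int_{\R^d}K^2(\|z\|_{\R^d})(z^\top A z)\,dz=\frac{\rho(x)\,\tr(A)}{2(4\pi)^{d/2}}=\frac{\rho(x)\,\|\nabla_{\R^d}\tilde f_x(0)\|_{\R^d}^2}{2(4\pi)^{d/2}}=\frac{\rho(x)\,\|\nabla_\M f(x)\|^2}{2(4\pi)^{d/2}},
\end{equation*}
by the definition of the Riemannian gradient norm via normal coordinates stated in Section~\ref{smoothing:prelim}. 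The main technical obstacle is bookkeeping: one must verify that every second-order term in the expansion either vanishes by odd symmetry or is truly $O(\e)$ uniformly, which is why one needs the $C^2$ hypothesis on $f$ (so that the quadratic Taylor error is uniformly controlled) and the compactness of $\M$ (so that the $\M$-dependent constants in Proposition~\ref{smoothing:aux:expansion} are uniform in $x$).
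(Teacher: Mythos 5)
Your proof is correct and takes essentially the same route as the paper: pass to normal coordinates, Taylor-expand $K^2$, $G_x$, $\rho$ and the increment of $f$, observe that the $O(\e)$-order odd terms integrate to zero against the symmetric $K^2$, and evaluate the leading integral via Lemma~\ref{smoothing:aux:quadratic} with $A=\nabla_{\R^d}\tilde f_x(0)\nabla_{\R^d}\tilde f_x(0)^\top$ to get $\tr(A)/(2(4\pi)^{d/2}) = \|\nabla_\M f(x)\|^2/(2(4\pi)^{d/2})$. One small imprecision: your expansion of $(\tilde f_x(\e z)-\tilde f_x(0))/\e$ with a uniform $\e^2 O(\|z\|^3)$ remainder implicitly uses $f\in C^3$; under the stated hypothesis $f\in C^2$ one only has the coarser $z^\top\nabla_{\R^d}\tilde f_x(0)+\e\,O(\|z\|^2)$ (as the paper writes), in which case the non-odd part of the $\e$-order cross term does not cancel by symmetry but is bounded directly and still contributes $O(\e)$ — so the conclusion is unaffected.
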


\begin{proof}[Proof of Lemma \ref{unif:sto:var}]
	Note that $K^2$ is still a Gaussian kernel.
	Thus, by using similar arguments to those in Theorem~\ref{unif:bias}, for all sufficiently small $\e$, we have
	\begin{align*}
		&\quad \frac{1}{\e^d} \int_\M K^2 \left( \frac{\|u-x\|_{\R^D}}{\e} \right) \left(\frac{f(u)-f(x)}{\e} \right)^2 d\P(u)
		\\&= \int_{\R^d} K^2(\|z\|_{\R^d}) \Big( z^\top \nabla_{\R^d} \tilde{f}_x (0) + \e O(\|z\|_{\R^d}^2) \Big)^2 \Big( \tilde{\rho}_x (0) + \e z^\top \nabla \tilde{\rho}_x (0) + \e^2 O(\|z\|_{\R^d}^2) \Big) dz + O(\e^2)
		%		\\&= \int_{\R^d} K^2(\|z\|_{\R^d}) \tilde{\rho}_x (0) \left( z^\top \nabla_{\R^d} \tilde{f}_x (0) \right)^2 dz + O(\e)
		\\&= \rho(x) \int_{\R^d} K^2(\|z\|_{\R^d}) \left( z^\top \nabla_{\R^d} \tilde{f}_x (0) \right)^2 dz + O(\e).
	\end{align*}
	Here, by Lemma \ref{smoothing:aux:quadratic}, the integral in the leading term can be simplified as
	\begin{align*}
		\int_{\R^d} K^2(\|z\|_{\R^d}) \left( z^\top \nabla_{\R^d} \tilde{f}_x (0) \right)^2 dz
		&= \int_{\R^d} K^2(\|z\|_{\R^d}) z^\top \left( \nabla_{\R^d} \tilde{f}_x (0) \nabla \tilde{f}_x (0)^\top \right) z  dz
		\\&= \frac{\tr \left(\nabla_{\R^d} \tilde{f}_x (0) \nabla \tilde{f}_x (0)^\top\right)}{2(4\pi)^{d/2}}
		%		\\&= \frac{\|\nabla_{\R^d} \tilde{f}_x (0) \|_{\R^d}^2}{2(4\pi)^{d/2}}
		= \frac{\|\nabla_{\M} f(x) \|^2}{2(4\pi)^{d/2}}.
	\end{align*}
	Therefore, we complete the proof by choosing $\e_0$ and $C_0$ appropriately.
\end{proof}

Now we proceed to the proof of \eqref{unif:sto-res2}.

\begin{proof}[Proof of \eqref{unif:sto-res2} in Theorem \ref{unif:sto}]
	By setting
	\begin{equation*}
		\bfS_\e [f](x) = \frac{1}{\e^d} \int_\M K \left( \frac{\|X - x\|_{\R^D}}{\e} \right) \frac{f(X)-f(x)}{\e},
	\end{equation*}
	we can write
	\begin{equation*}
		\TT_{n,\e} [f] (x)- \TT_{\e} [f] (x) = \cA_1 \cA_2 - \cA_3 \cA_4,
	\end{equation*}
	where
	\begin{alignat*}{4}
		\cA_1 &= \frac{\e}{\T_{n,\e}[1](x)}, &
		\cA_2 &= \bfS_{n,\e}[f](x) - \bfS_{\e}[f](x), \\
		\cA_3 &= \frac{\TT_{\e}[f](x) - f(x)}{\T_{n,\e}[1](x)}, \quad &
		\cA_4 &= \T_{n,\e}[1](x) - \T_{\e}[1](x). 
	\end{alignat*}
	We can bound $\cA_1, \cA_3, \cA_4$ by using \eqref{unif:sto-res1}, Lemma~\ref{unif:sto:denom} and Theorem~\ref{unif:bias}. Hence, it suffices to check $\cA_2$.
	
	The details of this derivation are quite similar to the proof of (\ref{unif:sto-res1}). In order to avoid repetitious arguments, we only outline the main steps of the proof.
	
	With
	$ \msG_\e = \left\{ g_x(\cdot) = K \left( \frac{\|\cdot - x\|_{\R^D}}{\e} \right) \frac{f(\cdot) - f(x)}{\e} : x \in \M \right\},$ we can write
	$$ \sup_{x \in \M} \left| \cA_2 \right| = \frac{1}{\e^d} \sup_{g \in \msG_\e} \left| \frac{1}{n} \sum_{i=1}^{n} g(X_i) - \E_\P g(X) \right|. $$
	Then, we first have
	$$ \sup_{g \in \msG_\e} \| g\|_\infty \leq B \coloneqq \frac{2 \|f\|_\infty }{\e}. $$
	Next, using Lemma \ref{unif:sto:var},
	it can be shown that there exist $\e'_2, C'_2 >0$ such that
	$$ \sup_{g \in \msG_\e} \E_\P g^2(X) \leq \sigma^2 \coloneqq C'_2 \e^d $$
	whenever $\e < \e'_2$.
	We further see that for all sufficiently small $\e$,
	\begin{align*}
		|g_x (u) - g_{x'} (u)|
		\leq \left( \frac{\|\nabla_\M f\|_\infty}{\e} + \frac{2\|f\|_\infty}{\e^2} \right) \|x - x'\|_{\R^D}
		\leq \frac{4\|f\|_\infty}{\e^2}  \dist_\M (x,x').
	\end{align*}
	So, there exists $\e_2''>0$ such that whenever $\e < \e''_2$ and $\eta < B$, we have
	\begin{align*}
		\sup_\Q \mathscr{N} \left( \msG_\e, L^2(\Q), \eta \right)
		\leq & \left(\frac{A_\M \cdot \frac{4\|f\|_\infty}{\e^2} }{\eta} \right)^d
	\end{align*}
	Therefore, we can find constants $\e_2$ and $C_2$  such that
	if $\e <\e_2$, with probability at least $1-\delta/4$, we have
	\begin{equation*}
		\sup_{x \in \M} \left| \cA_2 \right|
		\leq C_2 \left( \sqrt{ \frac{\log(1/(\e\wedge\delta))}{n\e^{d}} } 
		+ \frac{\log(1/(\e\wedge \delta))}{n \e^{d+1}} \right).
	\end{equation*}
	By putting everything together, we conclude the proof.
\end{proof}

% ===== Berry-Esseen: unnormalized =====
\subsection{Proof of Theorem \ref{Berry:unnor}}

We first present some supporting lemmas.

The following lemma will be used to show the asymptotic independence of kernel smoothing at two distinct points in $\M$. 

\begin{lemma} \label{Berry:unnor:aux1}
	Suppose Assumptions \ref{smoothing:assume:manifold}, \ref{smoothing:assume:density}, \ref{smoothing:assume:kernel} hold.
	Then, for distinct points $x_1, x_2 \in \M$ and $\kappa>0,$ there exists positive constants $\e_0$ and $C_0$ such that for all $\e < \e_0$,
	\begin{equation}
		\frac{1}{\e^d} \int_{\M} K\left(\frac{\|u-x_1\|_{\R^D}}{\e}\right) K\left(\frac{\|u-x_2\|_{\R^D}}{\e}\right) \dvol (u) < C \e^\kappa.
	\end{equation}
\end{lemma}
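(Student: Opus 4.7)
The key observation is that the two Gaussians are centered at distinct points $x_1,x_2 \in \M$, so they cannot both be significantly larger than their tails at the same $u$: any $u$ is at Euclidean distance at least $\tfrac{1}{2}\|x_1-x_2\|_{\R^D}$ from one of them. This will yield a super-polynomial decay factor in $\e$, and after dividing by $\e^d$ the bound will still be $o(\e^\kappa)$ for any $\kappa>0$.

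First I would use the parallelogram identity in $\R^D$ with midpoint $m = \tfrac{1}{2}(x_1+x_2)$:
\begin{equation*}
\|u-x_1\|_{\R^D}^2 + \|u-x_2\|_{\R^D}^2 = 2\|u-m\|_{\R^D}^2 + \tfrac{1}{2}\|x_1-x_2\|_{\R^D}^2.
\end{equation*}
Writing out the Gaussian kernel from Assumption~\ref{smoothing:assume:kernel}, the product in the integrand factors as
\begin{equation*}
K\!\left(\tfrac{\|u-x_1\|_{\R^D}}{\e}\right) K\!\left(\tfrac{\|u-x_2\|_{\R^D}}{\e}\right)
= (2\pi)^{-d}\, e^{-\|x_1-x_2\|_{\R^D}^2/(4\e^2)}\, e^{-\|u-m\|_{\R^D}^2/\e^2}.
\end{equation*}

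Next, since $e^{-\|u-m\|_{\R^D}^2/\e^2}\le 1$, the integral over $\M$ is trivially bounded by $\mathrm{vol}(\M)$; so
\begin{equation*}
\frac{1}{\e^d}\int_{\M} K\!\left(\tfrac{\|u-x_1\|_{\R^D}}{\e}\right) K\!\left(\tfrac{\|u-x_2\|_{\R^D}}{\e}\right) \dvol(u)
\leq \frac{(2\pi)^{-d}\mathrm{vol}(\M)}{\e^d}\, e^{-\|x_1-x_2\|_{\R^D}^2/(4\e^2)}.
\end{equation*}
Setting $\delta_0 = \|x_1-x_2\|_{\R^D}>0$, the right-hand side equals $C\,\e^{-d}\exp(-\delta_0^2/(4\e^2))$ with $C$ depending only on $d, \mathrm{vol}(\M)$.

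Finally, for any fixed $\kappa>0$, an elementary estimate gives $\e^{-d}\exp(-\delta_0^2/(4\e^2)) \le C'\e^\kappa$ for all $\e$ smaller than some $\e_0=\e_0(d,\delta_0,\kappa)$, since the exponential decay in $1/\e^2$ dominates any polynomial factor. Absorbing constants yields the claim with $C_0$ depending on $\kappa$, $d$, $\mathrm{vol}(\M)$ and $\|x_1-x_2\|_{\R^D}$. There is no real obstacle here; the whole argument is essentially the elementary tail estimate, and the only nontrivial step is the midpoint decomposition that cleanly isolates the $\e$-independent gap $\delta_0$ between $x_1$ and $x_2$.
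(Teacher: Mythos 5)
Your proof is correct, and it takes a slightly different and arguably cleaner route than the paper. You exploit the multiplicative structure of the Gaussian via the parallelogram identity
\begin{equation*}
\|u-x_1\|_{\R^D}^2 + \|u-x_2\|_{\R^D}^2 = 2\|u-m\|_{\R^D}^2 + \tfrac{1}{2}\|x_1-x_2\|_{\R^D}^2,
\end{equation*}
which factors the product of kernels and directly exhibits a $u$-independent exponential decay factor $e^{-\|x_1-x_2\|_{\R^D}^2/(4\e^2)}$; bounding the remaining factor by $1$ and integrating then gives a bound of order $\e^{-d}\exp(-c/\e^2)$, which is super-polynomially small. The paper instead picks $0<a<1$, observes that for $\e$ small the two exceptional sets $\{\|u-x_j\|_{\R^D}>\e^a\}$ cover $\M$, and then bounds the integral over each piece by the tail of the corresponding single kernel. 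Both are elementary tail estimates; your factorization is specific to the Gaussian (it would not carry over verbatim to the ``general kernel'' extension the paper alludes to in Section~3), whereas the paper's split-and-tail argument only needs sufficiently fast decay of $K$ and is therefore more robust. Your opening remark that ``any $u$ is at distance at least $\tfrac{1}{2}\|x_1-x_2\|_{\R^D}$ from one of $x_1,x_2$'' is exactly the intuition behind the paper's split, but your actual proof does not need it once you invoke the identity. One cosmetic point: the statement names the constant $C_0$ in the hypothesis but $C$ in the display; your proof should produce the constant with the name used in the display, and you should note (as you do) that it depends on $d$, $\mathrm{vol}(\M)$, $\|x_1-x_2\|_{\R^D}$, and $\kappa$.
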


\begin{proof}[Proof of Lemma \ref{Berry:unnor:aux1}]
	Let $0<a<1$ and set $\e_{a} = (1 \wedge \|x_1 - x_2\|_{\R^D})^{1/a}$.
	Then, for $\e <\e_a,$ we have 
	$$ \{u \in \M :\|u-x_1\|_{\R^D} > \e^a\} \cup \{u \in \M:\|u-x_2\|_{\R^D} > \e^a\} = \M.$$
	Using this, we see that for $\e < \e_a$,
	\begin{align*}
		& \frac{1}{\e^d} \int_{\M} K\left(\frac{\|u-x_1\|_{\R^D}}{\e}\right) K\left(\frac{\|u-x_2\|_{\R^D}}{\e}\right) \dvol (u)
		\\& \leq \frac{1}{\e^d} \int\limits_{\M \cap \set{u: \|u-x_1\|\geq\e^a} } K\left(\frac{\|u-x_1\|_{\R^D}}{\e}\right) \dvol (u)
		+ \frac{1}{\e^d} \int\limits_{\M \cap \set{u: \|u-x_2\|\geq\e^a} } K\left(\frac{\|u-x_2\|_{\R^D}}{\e}\right) \dvol (u)
		\\& \leq \frac{2 \mathrm{vol} (\M) e^{-\frac{1}{2}\e^{2(1-a)}}}{(2 \pi)^{d/2}\e^{d}}.
	\end{align*}
	The remainder of the proof is standard.
\end{proof}

We present three lemmas for controlling Berry-Esseen type bounds.

\begin{lemma}[Lemma 8 of \cite{panov2015finite}] \label{Berry:unnor:compare}
	For $\cN_m(\mu',\Sigma')$ and $\cN_m(\mu,\Sigma)$, suppose
	$$ \|\Sigma^{-1/2} \Sigma' \Sigma^{-1/2} - I_m \|_\op \leq \zeta \leq 1/2, \quad 
	\|\Sigma^{-1/2} \Sigma' \Sigma^{-1/2} - I_m \|_\F \leq \delta, $$
	where $\|\cdot\|_\op$ denotes the operator norm.
	Then, for any measurable set $A \subset \mathbb{R}^m$, it holds that
	\begin{equation*}
		|\Phi_{m,\mu',\Sigma'}(A) - \Phi_{m,\mu,\Sigma} (A)| \leq \frac{1}{2} \sqrt{\delta^2 + (1+\zeta)(\mu'-\mu)^\top \Sigma (\mu'-\mu)}.
	\end{equation*}	
\end{lemma}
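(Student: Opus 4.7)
My approach will be via Pinsker's inequality combined with the closed-form formula for the Kullback--Leibler divergence between two multivariate normals. First, I would note that for any measurable $A$ and any two probability measures $P,Q$, one has $|P(A)-Q(A)|\le \tfrac12\|P-Q\|_{\rm TV}$, and Pinsker gives $\|P-Q\|_{\rm TV} \le \sqrt{2\,\mathrm{KL}(P\|Q)}$. Applied to $P=\Phi_{m,\mu',\Sigma'}$ and $Q=\Phi_{m,\mu,\Sigma}$, this reduces the task to showing
\begin{equation*}
 2\,\mathrm{KL}(\Phi_{m,\mu',\Sigma'}\,\|\,\Phi_{m,\mu,\Sigma}) \;\le\; \delta^2 + (1+\zeta)(\mu'-\mu)^\top \Sigma^{-1}(\mu'-\mu),
\end{equation*}
up to a routine identification of the quadratic form in the statement. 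Recall that for Gaussians,
\begin{equation*}
 2\,\mathrm{KL} \;=\; \tr(\Sigma^{-1}\Sigma') - m \;+\; (\mu-\mu')^\top \Sigma^{-1}(\mu-\mu') \;-\; \log\det(\Sigma^{-1}\Sigma').
\end{equation*}

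Next I would linearize the covariance contribution. Setting $M = \Sigma^{-1/2}\Sigma'\Sigma^{-1/2} - I_m$, one has $\tr(\Sigma^{-1}\Sigma') - m = \tr(M)$ and $-\log\det(\Sigma^{-1}\Sigma') = -\log\det(I_m+M)$, so the covariance piece reduces to $\tr(M) - \log\det(I_m+M) = \sum_{i=1}^m \bigl(\lambda_i - \log(1+\lambda_i)\bigr)$, where $\lambda_i$ are the eigenvalues of the symmetric $M$. The operator norm bound $\|M\|_{\op}\le \zeta \le 1/2$ ensures $|\lambda_i|\le 1/2$, and the elementary inequality $0 \le \lambda - \log(1+\lambda) \le \lambda^2$ valid on $|\lambda|\le 1/2$ (checked separately on $\lambda\ge 0$ using $\log(1+\lambda)\ge \lambda - \lambda^2/2$, and on $\lambda<0$ via the convergent series $\sum_{k\ge 2}|\lambda|^k/k \le |\lambda|^2/(1-|\lambda|) \le 2|\lambda|^2$ but with the sharper coefficient obtained from $|\lambda|\le 1/2$) then yields $\tr(M) - \log\det(I_m+M) \le \|M\|_{\F}^2 \le \delta^2$.

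For the mean contribution, I would use the operator norm assumption on $M$ to compare the relevant Mahalanobis form with the one appearing in the bound. The key identity is $(\mu-\mu')^\top \Sigma^{-1}(\mu-\mu') = v^\top v$ with $v = \Sigma^{-1/2}(\mu-\mu')$, and similarly for $\Sigma'$ one has $v^\top(I+M)^{-1}v$; the spectral bound $\|M\|_{\op}\le\zeta$ then gives the two-sided comparison $(1-\zeta)^{-1}$ or $(1+\zeta)$-type factors, picking the KL direction that produces exactly the $(1+\zeta)$ appearing in the conclusion. Plugging the two pieces into Pinsker and taking a square root delivers the bound $\tfrac12 \sqrt{\delta^2 + (1+\zeta)(\mu'-\mu)^\top\Sigma^{-1}(\mu'-\mu)}$.

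The main obstacle is tracking the sharp constants rather than any conceptual difficulty: one has to choose the direction of the KL divergence carefully (since KL is asymmetric while TV is not), and one has to extract the exact coefficient in $\lambda - \log(1+\lambda) \le \lambda^2$ on $|\lambda|\le 1/2$ together with the exact inflation factor when replacing $\Sigma'^{-1}$ by $\Sigma^{-1}$, so that the final bound has the stated $1/2$ prefactor and the precise $(1+\zeta)$ multiplier. Everything else is a standard multivariate Gaussian computation.
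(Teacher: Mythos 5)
The paper does not prove this lemma; it is cited from Panov and Spokoiny (2015, Lemma 8), so there is no in-paper proof to compare against. Your Pinsker-plus-Gaussian-KL strategy is a perfectly sound way to establish the bound, and it is essentially the route used in the cited source. One clarifying remark: with $P=\Phi_{m,\mu',\Sigma'}$, $Q=\Phi_{m,\mu,\Sigma}$ and $M=\Sigma^{-1/2}\Sigma'\Sigma^{-1/2}-I_m$, the identity $2\,\mathrm{KL}(P\|Q)=\bigl(\tr M-\log\det(I_m+M)\bigr)+(\mu'-\mu)^\top\Sigma^{-1}(\mu'-\mu)$ already places $\Sigma^{-1}$ (not $\Sigma'^{-1}$) in the quadratic form, so there is no factor to inflate: together with your correct elementary inequality $\lambda-\log(1+\lambda)\le\lambda^2$ on $|\lambda|\le 1/2$ (hence $\tr M-\log\det(I_m+M)\le\|M\|_\F^2\le\delta^2$) and $|P(A)-Q(A)|\le\sqrt{\tfrac12\mathrm{KL}(P\|Q)}$, you obtain the \emph{stronger} bound $\tfrac12\sqrt{\delta^2+(\mu'-\mu)^\top\Sigma^{-1}(\mu'-\mu)}$, i.e.\ with $(1+\zeta)$ replaced by $1$. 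Your discussion of "choosing the KL direction to produce the $(1+\zeta)$" is where you go slightly astray: the reversed direction $\mathrm{KL}(Q\|P)$ would put $\Sigma'^{-1}$ in the mean term, whose spectral comparison factor is $(1-\zeta)^{-1}$ rather than $(1+\zeta)$, and moreover the covariance contribution in that direction, $\sum_i\bigl((1+\lambda_i)^{-1}-1+\log(1+\lambda_i)\bigr)$, is not dominated by $\sum_i\lambda_i^2$ near $\lambda=-1/2$, so that direction fails. Stick with $\mathrm{KL}(\Phi_{\mu',\Sigma'}\|\Phi_{\mu,\Sigma})$ and simply drop the direction-selection step. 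Finally, you correctly write $(\mu'-\mu)^\top\Sigma^{-1}(\mu'-\mu)$, whereas the statement as reproduced in the paper has $\Sigma$ rather than $\Sigma^{-1}$; your version matches the cited source and is the one your argument proves. Since the paper invokes the lemma only with $\mu'=\mu$, the discrepancy does not affect anything downstream.
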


\begin{lemma} \label{Berry:unnor:matconv}
	For two strictly positive definite matrices $\Sigma, \Sigma' \in \R^{m \times m}$, it holds that
	\begin{equation}
		\|\Sigma^{-1/2} \Sigma' \Sigma^{-1/2} - I_m\|_\op \leq \|\Sigma^{-1}\|_\op \|\Sigma' - \Sigma\|_\op
	\end{equation}
	Moreover, if $\|\Sigma^{-1/2} \Sigma' \Sigma^{-1/2} - I_m\|_\op \leq \zeta \leq 1/2$, then
	\begin{equation}
		\| (\Sigma')^{-1/2} \|_\op \leq \|\Sigma^{-1/2}\|_\op \sqrt{1+2 \zeta}.
	\end{equation}
\end{lemma}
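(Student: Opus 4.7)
\textbf{Proof plan for Lemma \ref{Berry:unnor:matconv}.}

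For the first inequality, the plan is to factor the difference as
\[
\Sigma^{-1/2}\Sigma'\Sigma^{-1/2}-I_m = \Sigma^{-1/2}(\Sigma'-\Sigma)\Sigma^{-1/2},
\]
and then apply submultiplicativity of the operator norm together with the identity $\|\Sigma^{-1/2}\|_\op^2 = \|\Sigma^{-1}\|_\op$, which holds because $\Sigma^{-1}$ is symmetric positive definite, so its operator norm equals its largest eigenvalue, whose square root is the largest eigenvalue of $\Sigma^{-1/2}$.

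For the second inequality, I would write $M := \Sigma^{-1/2}\Sigma'\Sigma^{-1/2} - I_m$, so that the hypothesis reads $\|M\|_\op \le \zeta \le 1/2$. Inverting the relation $\Sigma' = \Sigma^{1/2}(I_m+M)\Sigma^{1/2}$ gives $(\Sigma')^{-1} = \Sigma^{-1/2}(I_m+M)^{-1}\Sigma^{-1/2}$, and the Neumann series yields $\|(I_m+M)^{-1}\|_\op \le (1-\|M\|_\op)^{-1} \le (1-\zeta)^{-1}$. Combining these and again using $\|\Sigma^{-1/2}\|_\op^2 = \|\Sigma^{-1}\|_\op$ produces
\[
\|(\Sigma')^{-1}\|_\op \;\le\; \frac{\|\Sigma^{-1}\|_\op}{1-\zeta}.
\]
Since $(\Sigma')^{-1}$ is symmetric positive definite, $\|(\Sigma')^{-1/2}\|_\op = \sqrt{\|(\Sigma')^{-1}\|_\op}$, so it suffices to verify the elementary inequality $\frac{1}{1-\zeta} \le 1+2\zeta$ for $\zeta \in [0,1/2]$. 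This reduces to $\zeta(1-2\zeta)\ge 0$, which is obvious on that range, and taking square roots yields the stated bound.

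The whole proof is essentially algebraic; there is no real obstacle. The only subtlety worth double-checking is the convexity/elementary estimate $\frac{1}{1-\zeta}\le 1+2\zeta$ on $[0,1/2]$, which sharpens the naive bound $\frac{1}{1-\zeta}\le 1+2\zeta+\dots$ from the geometric series exactly at the boundary $\zeta=1/2$, and is what forces the assumption $\zeta\le 1/2$ rather than some smaller threshold.
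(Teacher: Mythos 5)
Your proof is correct and follows essentially the same route as the paper: factor $\Sigma^{-1/2}\Sigma'\Sigma^{-1/2}-I_m=\Sigma^{-1/2}(\Sigma'-\Sigma)\Sigma^{-1/2}$ plus submultiplicativity for the first bound, then invert $\Sigma'=\Sigma^{1/2}(I_m+M)\Sigma^{1/2}$ for the second. The one minor difference is how you bound $\|(I_m+M)^{-1}\|_\op$: you use the Neumann series to get $(1-\zeta)^{-1}$ and then the scalar inequality $(1-\zeta)^{-1}\le 1+2\zeta$ on $[0,1/2]$, while the paper invokes a matrix-functional-calculus result (Theorem 6.2.8 of Horn and Johnson) applied to the scalar bound $|(1+u)^{-1}-1|\le 2|u|$ for $|u|\le 1/2$; both yield $\|(I_m+M)^{-1}\|_\op\le 1+2\zeta$, and your Neumann-series argument is the more elementary and self-contained of the two.
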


\begin{proof}[Proof of Lemma \eqref{Berry:unnor:matconv}]
	From the identity
	%\begin{equation*}
	$\Sigma^{-1/2} (\Sigma' - \Sigma) \Sigma^{-1/2} = \Sigma^{-1/2} \Sigma' \Sigma^{-1/2} - I_m,$ 
	%	\end{equation*}
and the sub-multiplicativity of the operator norm, we have
\begin{align*}
	\|\Sigma^{-1/2} \Sigma' \Sigma^{-1/2} - I_m\|_\op 
	&\leq \|\Sigma^{-1/2}\|_\op \| \Sigma' - \Sigma \|_\op \|\Sigma^{-1/2}\|_\op
	\\&= \|\Sigma^{-1}\|_\op \| \Sigma' - \Sigma \|_\op.
\end{align*}
This implies the first assertion. Next, similarly, from the identity
%\begin{align*}
$\Sigma' = \Sigma^{1/2} [ I_m + (\Sigma^{-1/2} \Sigma' \Sigma^{-1/2} - I_m) ] \Sigma^{1/2},$
%	\end{align*}
we obtain
\begin{equation}
	\|(\Sigma')^{-1}\|_\op \leq \| \Sigma^{-1}\|_\op \left\| [I_m + (\Sigma^{-1/2} \Sigma' \Sigma^{-1/2} - I_m)]^{-1} \right\|_\op 
	\label{Berry:unnor:matconv-eq1}
\end{equation}

Note that from the inequality $ |(1+u)^{-1} - 1| \leq 2|u| $ with $|u| < 1/2$, applying the matrix function argument (see, e.g., Theorem 6.2.8 of \cite{horn1991topics}) gives
\begin{align*}
	\| (I_m + E)^{-1} - I_m \|_\op \leq 2\|E\|_\op, \quad\text{and}\quad \| (I_m + E)^{-1} \|_\op \leq 1+ 2\|E\|_\op,
\end{align*}
for any symmetric matrix $E$ with $\|E\|_\op \leq 1/2$.

Thus, by substituting $E = \Sigma^{-1/2} \Sigma' \Sigma^{-1/2} - I_m$ and taking square root on both sides of \eqref{Berry:unnor:matconv-eq1}, the proof is complete.
\end{proof}

\begin{lemma} \label{Berry:unnor:error}
	For random vectors $Z, W \in \R^m$, a probability measure $\Q$ on $\R^m$, and $\delta >0$, it holds that
	\begin{multline}
		\sup_{A \in \cvx(\R^m)} |\Q( Z+W \in A) - \Phi_{m,\mu,\Sigma} (A)|
		\leq \sup_{A \in \cvx(\R^m)} |\Q( Z \in A) - \Phi_{m,\mu,\Sigma}(A)|
		\\+ C_m \delta \sqrt{\|\Sigma^{-1}\|_\F} + \Q(\|W\|_{\R^m} > \delta).
	\end{multline}	
	where $C_m$ is a constant depending only on $m$.
\end{lemma}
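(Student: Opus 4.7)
The plan is a standard enlargement/shrinkage argument combined with a Gaussian anti-concentration inequality on convex sets. For $A\in\cvx(\R^m)$ I introduce the $\delta$-enlargement $A^{+\delta}=\{x\in\R^m:\dist(x,A)\le\delta\}$ and $\delta$-shrinkage $A^{-\delta}=\{x\in\R^m:B(x,\delta)\subset A\}$; when $A$ is convex both $A^{+\delta}$ and $A^{-\delta}$ remain convex (the former as a Minkowski sum with a ball, the latter by a direct convexity check). The key geometric observation is that on the event $\{\|W\|_{\R^m}\le\delta\}$ the inclusions $\{Z+W\in A\}\subset\{Z\in A^{+\delta}\}$ and $\{Z\in A^{-\delta}\}\subset\{Z+W\in A\}$ both hold.

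From these inclusions, adding the bad event $\{\|W\|_{\R^m}>\delta\}$ yields the sandwich
\begin{equation*}
\Q(Z\in A^{-\delta})-\Q(\|W\|_{\R^m}>\delta)\;\le\;\Q(Z+W\in A)\;\le\;\Q(Z\in A^{+\delta})+\Q(\|W\|_{\R^m}>\delta).
\end{equation*}
Set $\eta:=\sup_{B\in\cvx(\R^m)}|\Q(Z\in B)-\Phi_{m,\mu,\Sigma}(B)|$ and apply this bound to the convex sets $A^{\pm\delta}$. Combined with the sandwich above, I obtain
\begin{equation*}
|\Q(Z+W\in A)-\Phi_{m,\mu,\Sigma}(A)|\;\le\;\eta+\Q(\|W\|_{\R^m}>\delta)+\max\bigl\{\Phi_{m,\mu,\Sigma}(A^{+\delta}\setminus A),\,\Phi_{m,\mu,\Sigma}(A\setminus A^{-\delta})\bigr\}.
\end{equation*}

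It remains to bound the Gaussian mass of the two shells uniformly over convex $A$ by $C_m\delta\sqrt{\|\Sigma^{-1}\|_\F}$. The natural route is the whitening map $Y=\Sigma^{-1/2}(X-\mu)$, under which $\cN_m(\mu,\Sigma)$ becomes standard Gaussian, $A$ becomes a convex set $\tilde A$, and $A^{+\delta}$ is contained in the $\delta\|\Sigma^{-1/2}\|_\op$-enlargement of $\tilde A$ (and similarly for $A^{-\delta}$). Nazarov's inequality for the standard Gaussian on convex sets then bounds the mass of such a shell by a dimensional constant times the radius. Using $\|\Sigma^{-1/2}\|_\op\le\sqrt{\|\Sigma^{-1}\|_\F}$ yields the stated bound, and taking the supremum over $A$ concludes the proof. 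The main obstacle is the Gaussian anti-concentration step: a naive half-space argument gives only the operator-norm version of the bound, and covering the case of a general convex $A$ requires invoking Nazarov's inequality (or an equivalent result). Everything else is elementary bookkeeping via inclusions of events.
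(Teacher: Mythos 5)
Your argument matches the paper's proof essentially step for step: same $\delta$-enlargement/shrinkage of convex sets, same event inclusions giving the sandwich inequality, same reduction to bounding the Gaussian measure of the shells. The only difference is the citation for the anti-concentration step --- the paper invokes Lemma 2.6 of Bentkus (2003), which gives the $\sqrt{\|\Sigma^{-1}\|_\F}$ bound directly, while you whiten and apply Nazarov-type anti-concentration to get the $\|\Sigma^{-1/2}\|_\op$ version and then relax via $\|\Sigma^{-1/2}\|_\op \le \sqrt{\|\Sigma^{-1}\|_\F}$; both routes yield the stated bound.
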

	
\begin{proof}[Proof of Lemma \ref{Berry:unnor:error}]
	For $A \in \cvx(\R^m)$, set
	\begin{align*}
		A_{+\delta} = \{ x \in \R^m: \dist(x,A) \leq \delta \}, \quad
		A_{-\delta} = \{ x \in \R^m: B_{m,\delta} (x) \subset A \},
		\end{align*}
	where $\dist(x,A) = \inf_{y \in A} \|x-y\|_{\R^m}$ and $B_{m,\delta} = \{y \in \R^m: \|x-y\|_{\R^m} \leq \delta\} $.
	Then, it can be easily shown that
	\begin{align*}
		\Q(Z \in A_{-\delta}) &\leq \Q(Z + W \in A) + \Q( \|W\|_{\R^m} > \delta), \\
		\Q(Z+W \in A) &\leq \Q(Z \in A_{+\delta}) + \Q( \|W\|_{\R^m} > \delta).
		\end{align*}
	Hence, we obtain
	\begin{align}
		|\Q( Z+W \in A) - \Phi_{m,\mu,\Sigma}(A)|
		&\leq |\Q(Z \in A_{-\delta}) - \Phi_{m,\mu,\Sigma}(A)| \vee |\Q(Z \in A_{+\delta}) 
		- \Phi_{m,\mu,\Sigma}(A)|
		\notag\\&\quad + \Q( \|W\|_{\R^m} > \delta).
		\label{Berry:unnor:error-eq1}
	\end{align}
	Using the identities
	\begin{align*}
		\Phi_{m,\mu,\Sigma}(A) = \Phi_{m,\mu,\Sigma}(A_{-\delta}) + \Phi_{m,\mu,\Sigma}(A \backslash A_{-\delta}) 
		= \Phi_{m,\mu,\Sigma}(A_{+\delta}) - \Phi_{m,\mu,\Sigma}(A_{+\delta} \backslash A),
	\end{align*}
	and the fact that both $A_{+\delta}$ and $A_{-\delta}$ are convex, 
	the first term on the right-hand side of \eqref{Berry:unnor:error-eq1} can be bounded by
	\begin{equation}
	\sup_{A' \in \cvx(\R^m)} |\Q( Z \in A') - \Phi_{m,\mu,\Sigma}(A')| + \Phi_{m,\mu,\Sigma}(A \backslash A_{-\delta}) \vee \Phi_{m,\mu,\Sigma}(A_{+\delta} \backslash A).
	\end{equation}
	According to Lemma 2.6 of \cite{bentkus2003dependence}, there exists a constant $C_m$ depending only on $m$ such that
	\begin{equation}
	\Phi_{m,\mu,\Sigma} (A \backslash A_{-\delta}) \vee \Phi_{m,\mu,\Sigma} (A_{+\delta} \backslash A) \leq C_m \delta \sqrt{\|\Sigma^{-1}\|_\F}.
	\end{equation}
	Therefore, combining these bounds completes the proof.
\end{proof}

Now we turn to the proof of Theorem \ref{Berry:unnor}. 

\begin{proof}[Proof of \eqref{Berry:unnor-res1}]
	For $i=1,\dots,n$ and $j=1,\ldots,m$, define
	\begin{align*}
		Z_i &= (Z_{i1}, \ldots, Z_{im})^\top, \quad S_n = \frac{1}{\sqrt{n}} \sum_{i=1}^n Z_i, 
		\quad \Sigma_{f,\e} = \var_\P (Z_1), \\
		Z_{ij} &= \sqrt{\e^d} \left( \frac{1}{\e^d} K \left( \frac{\|X_i - x_j\|_{\R^D}}{\e} \right) f(X_i)
		- \E_\P \left[ \frac{1}{\e^d} K \left( \frac{\|X - x_j\|_{\R^D}}{\e} \right) f(X) \right] \right).
	\end{align*}
	Note that $S_n = \mathbf{Z}_{n,\e,f}$ is our quantity of interest, and that $Z_i$, $i=1,\dots,n$, are mean zero, i.i.d. random vectors in $\R^m$. For $A \in \cvx(\R^m)$, write
	\begin{equation}
		\P( S_n \in A) - \Phi_{m,0,\Sigma_f} (A) = \underbrace{\P( S_n \in A) - \Phi_{m,0,\Sigma_{f,\e}} (A)}_{\cE_1} + \underbrace{\Phi_{m,0,\Sigma_{f,\e}} (A) - \Phi_{m,0,\Sigma_{f}} (A)}_{\cE_2}.
		\label{Berry:unnor-eq4}
	\end{equation}
	
	\begin{myenumerate}
		\myitem{Bounding $\cE_1$}
		Applying the Berry-Esseen theorem gives
		\begin{equation*}
			|\cE_1| \leq \frac{C \, \E\| \Sigma^{-1/2}_{f,\e} Z_1\|^3_{\R^m}}{\sqrt{n}},
		\end{equation*}
		where $C$ is a constant depending only on $m$. 
		To bound the right-hand, using Lemmas~\ref{smoothing:aux:quadratic} and \ref{Berry:unnor:aux1}, and similar arguments as in the proof of Theorem~\ref{unif:bias}, it is straightforward to show that for all sufficiently small $\e$,
		\begin{align*}
			\E_\P Z_{ij}^2 &= \frac{\rho(x_j) f^2(x_j)}{(4\pi)^{d/2}} + O(\e^{2\wedge d}), \\
			\E_\P Z_{ij} Z_{ij'} &= O(\e^d) \quad\text{if $j\ne j'$}, \\
			\E_\P \left| Z_{ij} \right|^3 &\le \frac{8 \Theta_d \rho(x_j) \left|f(x_j)\right|^3
			 + O(\e^2) }{\sqrt{\e^d}},
		\end{align*}
		where $\Theta_d = \int_{\R^d} K^3 (\|z\|_{\R^d}) dz $.
		These yield
		\begin{align*}
			\|\Sigma_{f,\e} - \Sigma_f \|_\F = O(\e^{2\wedge d}), \quad
			\E \|Z_1\|^3_{\R^m} = O \Big(\frac{1}{\sqrt{\e^d}}\Big).
		\end{align*}
		Since the Frobenius norm dominates the operator norm and 
		$$ \| \Sigma_f^{-1} \|_\op = \max_{j=1,\ldots,m} \frac{(4 \pi)^{d/2}}{\rho(x_j) f^2(x_j)} >0, $$
		applying Lemma~\ref{Berry:unnor:matconv} yields
		\begin{equation}
			\| \Sigma_f^{-1/2} \Sigma_{f,\e} \Sigma_f^{-1/2} - I_m \|_\op = O(\e^{2\wedge d}),
			\quad \|\Sigma_{f,\e}^{-1/2}\|_\op = O(1).
			\label{Berry:unnor-eq1}
		\end{equation}
		
		Therefore, there exist $\e_1, C_1>0$ such that $\e <\e_1$ implies
		\begin{equation}
			|\cE_1| \leq \frac{C_1}{\sqrt{n\e^d}}.
			\label{Berry:unnor-eq2}
		\end{equation}
		
		\myitem{Bounding $\cE_2$}
		Applying Lemma~\ref{Berry:unnor:compare} gives
		\begin{equation*}
			|\cE_2| \leq \frac{1}{2} \|\Sigma_f^{-1/2} \Sigma_{f,\e} \Sigma_f^{-1/2}-I_m \|_\F
			\leq \frac{\sqrt{m}}{2} \|\Sigma_f^{-1/2} \Sigma_{f,\e} \Sigma_f^{-1/2}-I_m \|_\op.
		\end{equation*}		
		provided that $\|\Sigma_f^{-1/2} \Sigma_{f,\e} \Sigma_f^{-1/2}-I_m\|_\op$ is sufficiently small.
		
		Therefore, using \eqref{Berry:unnor-eq1}, we can find $\e_2, C_2>0$ such that $\e <\e_2$ implies
		\begin{equation}
			|\cE_2| \leq C_2 (\e^{2 \wedge d}).
			\label{Berry:unnor-eq3}
		\end{equation}
	\end{myenumerate}
	
	Combining \eqref{Berry:unnor-eq4}, \eqref{Berry:unnor-eq2} and \eqref{Berry:unnor-eq3}, completes the proof of (\ref{Berry:unnor-res1}).
	
	{\em Proof of \eqref{Berry:unnor-res2}.} By letting $\bfB_{n,\e,f} = \sqrt{n\e^d} \big(\T_\e[f](x_i) - \rho(x_i)f(x_i)\big)_{i=1}^m$ be the bias vector, we have
	$ \tbfZ_{n,\e,f} = \bfZ_{n,\e,f} + \bfB_{n,\e,f}. $
	From Theorem~\ref{unif:bias}, there exist $\e_0, C_0$ such that $\e < \e_0$ implies
	$$ \|\bfB_{n,\e,f}\|_{\R^m} < \sqrt{n\e^d} \cdot C_0 \e^2 = C_0 \sqrt{n\e^{d+4}}. $$
	Thus, by applying Lemma~\ref{Berry:unnor:error} with $Z = \bfZ_{n,\e,f}$, $W = \bfB_{n,\e,f}$ and $\delta = C_0 \sqrt{n\e^{d+4}}$ and using \eqref{Berry:unnor-res1}, we complete the proof.
\end{proof}

% ===== Berry-Esseen: normalized =====
\subsection{Proof of Theorem \ref{Berry:nor}}

Since \eqref{Berry:nor-res2} can be derived similarly to the proof of \eqref{Berry:unnor-res2} in Theorem \ref{Berry:unnor}, we only prove \eqref{Berry:nor-res1}.

\begin{proof}[Proof of \eqref{Berry:nor-res1}]
	Using the expansion of $\TT_{n,\e} [f] (x) - \TT_{\e} [f](x)$ in the proof of \eqref{unif:sto-res2} in Theorem~\ref{unif:sto}, we can write
	\begin{equation*}
	\overline{\bfZ}_{n,\e,f} = \sqrt{n \e^{d-2}} 
	\big( \TT_{n,\e} [f] (x_j) - \TT_{\e} [f] (x_j) \big)_{j=1}^m
	= S_n + R_n S_n,
	\end{equation*}
	where for $i=1,\ldots,n$, and $j=1,\ldots,m$,
	\begin{align*}
	Z_i &= (Z_{i1}, \dots, Z_{im})^\top, \quad S_n = \frac{1}{\sqrt{n}} \sum_{i=1}^n Z_i, \quad \overline{\Sigma}_{f,\e} = \var_\P (Z_1), \\
	Z_{ij} &= \frac{1}{\T_\e [1](x_j)} \left( V_{ij} - \frac{\TT_\e[f](x_j)-f(x_j)}{\e} U_{ij} \right), \\
	U_{ij} &= \sqrt{\e^d} \left( \frac{1}{\e^d} K \left( \frac{\|X_i-x_j\|_{\R^D}}{\e} \right)
	- \E_\P \left[ \frac{1}{\e^d} K \left( \frac{\|X - x_j\|_{\R^D}}{\e} \right) \right] \right), \\
	V_{ij} &= \sqrt{\e^d} \left( \frac{1}{\e^d} K \left( \frac{\|X_i - x_j\|_{\R^D}}{\e} \right) \frac{f(X_i)-f(x_j)}{\e}
	- \E_\P \left[ \frac{1}{\e^d} K \left( \frac{\|X - x_j\|_{\R^D}}{\e} \right) \frac{f(X)-f(x_j)}{\e} \right] \right), \\
	R_n &= \diag \left( \frac{1}{\T_{n,\e}[1](x_j)} - \frac{1}{\T_{\e}[1](x_j)} \right)_{j=1}^m. 
	%\diag \left( \frac{1}{\T_{n,\e}[1](x_1)} - \frac{1}{\T_{\e}[1](x_1)},\;\; \frac{1}{\T_{n,\e}[1](x_2)} - \frac{1}{\T_{\e}[1](x_2)}  \right).
	\end{align*}
	
	Applying Lemma~\ref{Berry:unnor:error} gives for $\delta>0$,
	\begin{align}
	\sup_{A \in \cvx(\R^m)} \Big| \P( \bfZ_{n,\e,f} \in A) - \Phi_{m,0,\overline{\Sigma}_f} (A) \Big|
	&\leq \sup_{A \in \cvx(\R^m)} \Big| \P( S_n \in A) - \Phi_{m,0,\overline{\Sigma}_f} (A) \Big|
	\notag\\ &\quad + C \delta \sqrt{\|\overline{\Sigma}_f^{-1}\|_\F} + \P(\|R_n S_n\|_{\R^m} >\delta).
	\label{Berry:nor:eq1}
	\end{align}
	We refer to the first term on the right-hand side as the main term and to the sum of the latter two terms as the remainder term, and bound each separately.
	
	\begin{myenumerate}
		\myitem{Bounding the main term}
		Since the derivation is quite similar to the proof \eqref{Berry:unnor-res1} in Theorem~\ref{Berry:unnor}, we only present the key steps.
		Using Theorem~\ref{unif:bias}, Lemma~\ref{unif:sto:var} and their proofs, it can be shown that for all sufficiently small $\e$,
		\begin{gather*}
			\E_\P U_{ij} = 0, \quad 
			\E_\P V_{ij} = 0, \\	
			\frac{1}{\T_\e [1](x_j)} = \frac{1}{\rho(x_j)} + O(\e^2), \quad
			\frac{\TT_\e[f](x_j)-f(x_j)}{\e} = O(\e), \\
			\E_\P U_{ij}^2 = \frac{\rho(x_j)}{(4\pi)^{d/2}} + O(\e), \quad
			\E_\P V_{ij}^2 = \frac{\rho(x_j) \| \nabla_\M f(x_j)\|^2}{2(4\pi)^{d/2}} + O(\e^{2\wedge d}), \\
			\E_\P U_{i1} U_{i2} = O(\e^d), \quad
			\E_\P V_{i1} V_{i2} = O(\e^d), \\
			\E_\P U_{i1} V_{i1} = O \left( \e \right), \quad
			\E_\P U_{i1} V_{i2} = O(\e^d), \\
			\E_\P \left| U_{ij} \right|^3 \leq \frac{8 \Theta_d \rho(x_j) + O(\e^{2\wedge d})}{\sqrt{\e^d}}, \quad
			\E_\P \left| V_{ij} \right|^3 \leq \frac{8 \Theta_d' \rho(x_j) \left\|\nabla_\M f(x_j)\right\|^3 + O(\e)}{\sqrt{\e^d}},			
		\end{gather*}
		where $\Theta_d = \int_{\R^d} K^3 (\|z\|_{\R^d}) dz $ and $\Theta_d' = \int_{\R^d} K^3(\|z\|_{\R^d})\|z\|_{\R^d}^3 dz$.	
		These yield
		\begin{align*}
			\|\Sigma_{f,\e} - \overline{\Sigma}_f \|_\F = O(\e), \quad
			\E \|Z_1\|^3_{\R^m} = O \Big(\frac{1}{\sqrt{\e^d}}\Big).
		\end{align*}
		
		Thus, there exist $\e_0, C_1, C_2 >0$ such that $\e < \e_0$ implies
		\begin{equation}
			\sup_{A \in \cvx(\R^m)} \Big| \P( S_n \in A) - \Phi_{m,0,\overline{\Sigma}_f} (A) \Big| \leq \frac{C_1}{\sqrt{n\e^d}} + C_2 \e.
			\label{Berry:nor:eq2}
		\end{equation}
		
		\myitem{Bounding the remainder term}
		The goal of this part is to bound 
		\begin{equation}
			C \delta \sqrt{\|\overline{\Sigma}_f^{-1}\|_\F} + \P(\|R_n S_n\|_{\R^m} >\delta)
			\label{Berry:nor:eq10}
		\end{equation}
		by choosing appropriate $\delta>0$ to balance the two terms. To this end, first, using \eqref{Berry:nor:eq2} and the tail bound for the chi-squared distribution, it is straightforward that
		\begin{equation}
			\P( \|S_n\|_{\R^m} > t) \leq \frac{C_1}{\sqrt{n\e^d}} + C_2 \e + \exp\left( -\frac{t^2}{8 \|\overline{\Sigma}_f\|_\op} \right),
			\label{Berry:nor:eq4}
		\end{equation}
		provided $\e < \e_0$ and $t \ge 2 \sqrt{m \|\overline{\Sigma}_f\|_\op}$. 
		Next, for each $j=1,\ldots,m$, the $j$th diagonal component of $R_n$ can be written as
		\begin{equation}
			-\frac{\T_{n,\e}[1](x_j) - \T_\e[1](x_j)}{\T_{n,\e}[1](x_j) \T_\e[1](x_j)}
			\label{Berry:nor:eq5}
		\end{equation}
		
		On the other hand, $\T_{n,\e}[1](x_j)-\T_{\e}[1](x_j)$ can be expressed by $U_{ij}$ as
		\begin{equation}
			\sqrt{n\e^d} ( \T_{n,\e}[1](x_j)-\T_{\e}[1](x_j)) = \frac{1}{\sqrt{n}} \sum_{i=1}^n U_{ij}.
			\label{Berry:nor:eq6}
		\end{equation}
		Then, using the arguments in bounding the main term and Theorem~\ref{unif:bias}, we have
		$$ \var_\P \T_{n,\e}[1](x_j) = O\Big(\frac{1}{n\e^d}\Big), \quad \T_{\e}[1](x_j) = \rho(x_j) + O(\e^2). $$
		These imply the denominator of \eqref{Berry:nor:eq5} can be properly bounded with high probability uniformly over $j=1,\dots,m$.
		Specifically, using Chebyshev's inequality, we have for all sufficiently small $\e$ and sufficiently large $n\e^d$ that
		\begin{equation}
			\P \left( \max_{j=1,\dots,m} \left| \frac{1}{\T_{n,\e}[1](x_j) \T_\e[1](x_j)} \right| > C_3 \right) \leq \frac{C_4}{n\e^d},
			\label{Berry:nor:eq7}
		\end{equation}
		for some $C_1',C_2'>0$.
		Therefore, under the same conditions, by combining \eqref{Berry:nor:eq5}, \eqref{Berry:nor:eq6}, and \eqref{Berry:nor:eq7}, we can bound $R_n$ as follows:
		\begin{equation}
			\P( \|R_n\|_\op > t ) \leq \P\left( \frac{C_3 \|S_n'\|_{\R^m}}{\sqrt{n\e^d}} > t\right) + \frac{C_4}{n\e^d}, \quad \text{for all $t>0$,}
			\label{Berry:nor:eq8}
		\end{equation}
		where $S_n' = \frac{1}{\sqrt{n}} \sum_{i=1}^n (U_{ij})_{j=1}^m.$
		
		Similar to $S_n$, we can apply the Berry-Esseen theorem to $S_n'$, and this yields
		\begin{equation}
			\P( \|S_n'\|_{\R^m} > t) \leq \frac{C_5}{\sqrt{n\e^d}} + C_6 \e + \exp\left( -\frac{t^2}{8 \|\Sigma_\rho\|_\op} \right), \quad\text{where } \Sigma_\rho = \diag\left(\frac{\rho(x_j)}{(4\pi)^{d/2}}\right)_{j=1}^m,
			\label{Berry:nor:eq9}
		\end{equation}
		provided $\e < \e_1$ and $t \ge 2 \sqrt{m \|\Sigma_\rho\|_\op}$, for some $\e_1,C_5,C_6>0$. 
		Now using the inequality 
		\begin{align*}
			\sqrt{n\e^d} \|R_n S_n\|_{\R^m} \leq \frac{n\e^d \|R_n\|^2_\op}{2} + \frac{\|S_n\|^2_{R^m}}{2},
		\end{align*}
		and combining \eqref{Berry:nor:eq4}, \eqref{Berry:nor:eq8} and \eqref{Berry:nor:eq9}, we obtain
		\begin{align}
			\P( \|R_n S_n\|_{\R^m} > \delta) 
			&= \P( \sqrt{n\e^d} \|R_n S_n\|_{\R^m} > \sqrt{n\e^d} \delta)
			\notag\\&\leq \P ( n\e^d \|R_n\|_\op^2 > \sqrt{n\e^d} \delta) + \P ( \|S_n\|_\op^2 > \sqrt{n\e^d} \delta)
			\notag\\&\leq \frac{C_7}{\sqrt{n\e^d}} + C_8 \e + \exp(-C_9 \sqrt{n\e^d} \delta),
		\end{align}
		provided $\e<\e_2$ and $\sqrt{n\e^d} \delta >C_{10}$, for some $\e_2, C_7,C_8,C_9,C_{10}>0$.
		
		Thus, under the same conditions, \eqref{Berry:nor:eq10} is bounded by
		$$ C \delta \sqrt{\|\overline{\Sigma}_f^{-1}\|_\F} + \frac{C_7}{\sqrt{n\e^d}} + C_8 \e + \exp(-C_9 \sqrt{n\e^d} \delta).$$
		By choosing optimal $\delta>0$, we have for $\delta = \log(n\e^d)/\sqrt{n\e^d}$,
		\begin{equation}
			C \delta \sqrt{\|\overline{\Sigma}_f^{-1}\|_\F} + \P(\|R_n S_n\|_{\R^m} >\delta)
			\leq \frac{C_{11} \log(n\e^d)}{\sqrt{n\e^d}} + C_8 \e.
			\label{Berry:nor:eq3}
		\end{equation}
	\end{myenumerate}
	
	Finally, by combining \eqref{Berry:nor:eq1}, \eqref{Berry:nor:eq2} and \eqref{Berry:nor:eq3} and adjusting the constants appropriately, we complete the proof.
\end{proof}

% ===== Berry-Essen: on critical points =====
\subsection{Proof of Theorem \ref{Berry:norcrit}}

The following lemma helps to derive the asymptotic variances at critical points.

\begin{lemma} \label{Berry:norcrit:var}
	Suppose Assumptions \ref{smoothing:assume:manifold}, \ref{smoothing:assume:density}, \ref{smoothing:assume:kernel} hold
	and $f \in C^3(\M)$. If $x\in\M$ is a critical point of $f$, then there exist $\e_0, C_0>0$ such that if $\e < \e_0$, we have
	\begin{gather}
	\left| \frac{\TT_\e[f](x)-f(x)}{\e^2} + \frac{1}{2} \Delta_\M f(x) \right| \leq C_0 \e, \label{Berry:norcrit:var-res1} \\
	\left| \frac{1}{\e^d} \int_\M K^2 \left( \frac{\|u-x\|_{\R^D}}{\e} \right) \frac{f(u)-f(x)}{\e^2} d\P(u) 
	+ \frac{\rho(x) \Delta_\M f(x) }{4(4\pi)^{d/2}} \right|
	\leq C_0 \e, \label{Berry:norcrit:var-res2} \\
	\left| \frac{1}{\e^d} \int_\M K^2 \left( \frac{\|u-x\|_{\R^D}}{\e} \right) \left(\frac{f(u)-f(x)}{\e^2} \right)^2 d\P(u) 
	- \frac{\rho(x) (2\| \nabla^2_{\M} f (x) \|^2 + \left|\Delta_\M f (x)\right|^2)}{16(4\pi)^{d/2}} \right|
	\leq C_0 \e. \label{Berry:norcrit:var-res3}
	\end{gather}
	Here, the constants $\e_0$ and $C_0$ are $\M$-dependent, and they also depend on the $C^3$-norm $\rho$ and $f$.
\end{lemma}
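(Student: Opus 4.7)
The plan is to invoke \eqref{unif:bias-res2} for part \eqref{Berry:norcrit:var-res1}, and to adapt the localization-and-Taylor-expansion scheme from the proof of Theorem~\ref{unif:bias} for parts \eqref{Berry:norcrit:var-res2} and \eqref{Berry:norcrit:var-res3}. For \eqref{Berry:norcrit:var-res1}, the product-rule identity derived in the proof of \eqref{unif:bias-res2} gives $\Delta_{\M,2}f(x) = \Delta_\M f(x) - \frac{2}{\rho(x)}\langle\nabla_\M\rho(x),\nabla_\M f(x)\rangle_{T_x\M}$, which collapses to $\Delta_{\M,2}f(x) = \Delta_\M f(x)$ at a critical point. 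Since $f\in C^3\subset C^{2,1}$, the remainder in \eqref{unif:bias-res2} is $O(\e^3)$, and division by $\e^2$ yields \eqref{Berry:norcrit:var-res1}.

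For \eqref{Berry:norcrit:var-res2} and \eqref{Berry:norcrit:var-res3}, I would split each integral into the contributions from $\Exp_x(\e B_{\e,a})$ and its complement; since $K^2$ is (up to a normalizing constant) itself a Gaussian kernel, the tail contribution is $O(\e^\kappa)$ for arbitrary $\kappa>0$ by the same argument used to bound $\cA_2$ in the proof of Theorem~\ref{unif:bias}. On the localized piece, I would change variables via $u=\Exp_x(\e z)$ and expand $G_x(\e z)$ and $K^2(\|\Exp_x(\e z)-x\|_{\R^D}/\e)$ using Proposition~\ref{smoothing:aux:expansion}. The new ingredient is the ratio $(f(u)-f(x))/\e^2$: because $\nabla_{\R^d}\tilde f_x(0)=0$ at the critical point and $f\in C^3$, Taylor's theorem yields
\begin{equation*}
\frac{\tilde f_x(\e z)-f(x)}{\e^2} \;=\; \tfrac{1}{2}\, z^\top H z + \e\, R_3(z) + O(\e^2\|z\|_{\R^d}^4),
\end{equation*}
where $H=\nabla^2_{\R^d}\tilde f_x(0)$ and the coefficients of $R_3$ are uniformly bounded in terms of the $C^3$-norm of $f$.

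Inserting this expansion into the integral for \eqref{Berry:norcrit:var-res2}, every first-order-in-$\e$ correction is an odd polynomial in $z$ and thus integrates to zero against the symmetric kernel $K^2$, so the leading behaviour is $\frac{\rho(x)}{2}\int_{\R^d}K^2(\|z\|_{\R^d})\, z^\top H z\, dz = \frac{\rho(x)\tr H}{4(4\pi)^{d/2}}$ by Lemma~\ref{smoothing:aux:quadratic}. Since Christoffel symbols vanish at the base point of normal coordinates, $\tr H = -\Delta_\M f(x)$ under the paper's sign convention $\Delta_\M=-\div_\M\nabla_\M$, so the leading term equals $-\frac{\rho(x)\Delta_\M f(x)}{4(4\pi)^{d/2}}$ and \eqref{Berry:norcrit:var-res2} follows with an $O(\e)$ remainder. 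For \eqref{Berry:norcrit:var-res3}, squaring the above expansion gives a leading integrand $\tfrac{1}{4}(z^\top H z)^2$, and Lemma~\ref{smoothing:aux:quadratic} yields $\rho(x)\int_{\R^d} K^2(\|z\|_{\R^d})\cdot\tfrac{1}{4}(z^\top H z)^2\,dz = \frac{\rho(x)(2\|H\|_\F^2+(\tr H)^2)}{16(4\pi)^{d/2}}$. Identifying $\|H\|_\F = \|\nabla^2_\M f(x)\|$ and $|\tr H|=|\Delta_\M f(x)|$ finishes the claim.

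The main obstacle is pure bookkeeping: one must track the contributions from all the simultaneous Taylor expansions (of $G_x$, $\tilde\rho_x$, $K^2$, and now of $(\tilde f_x(\e z)-f(x))/\e^2$) and verify that each nominally $O(1)$-in-$\e$ correction is either killed by the symmetry of $K^2$ (odd polynomial in $z$) or is itself already of higher order in $\e$. The $C^3$ hypothesis on $f$ is essential, as it is what makes the cubic Taylor remainder of $\tilde f_x$ at the critical point uniformly bounded in $x$, which is what produces the uniform $O(\e)$ rate claimed in the lemma.
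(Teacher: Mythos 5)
Your proposal is correct and follows essentially the same route as the paper: invoke Theorem~\ref{unif:bias} together with the product-rule identity for $\Delta_{\M,2}$ (which collapses to $\Delta_\M$ at a critical point) for \eqref{Berry:norcrit:var-res1}, and for \eqref{Berry:norcrit:var-res2}--\eqref{Berry:norcrit:var-res3} expand in normal coordinates, exploit the vanishing gradient, kill odd terms by symmetry of $K^2$, and evaluate the leading integrals via Lemma~\ref{smoothing:aux:quadratic}. One small inaccuracy: for $f\in C^3$ the Taylor remainder of $(\tilde f_x(\e z)-f(x))/\e^2$ after the quadratic term is $\e\,O(\|z\|_{\R^d}^3)$ (Lagrange form), not $\e R_3(z)+O(\e^2\|z\|_{\R^d}^4)$ as you wrote (the latter would need $C^4$); this does not affect the conclusion, since a degree-three bound already yields the stated $O(\e)$ rate, and is what the paper uses.
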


\begin{proof}[Proof of Lemma \ref{Berry:norcrit:var}]
	From Theorem~\ref{unif:bias} and its proof, for all sufficiently small $\e$, we have
	\begin{align*}
		\TT_\e[f](x)-f(x) &= -\frac{\e^2}{2} \Delta_{\M,2} f(x) + O(\e^3), \\
		\Delta_{\M,2} f (x) &= \frac{\rho^2(x) \Delta_\M f (x) - 2 \rho(x) \langle \nabla_\M \rho(x), \nabla_\M f(x) \rangle_{T_x \M} }{\rho^2 (x)}.
	\end{align*}
	Since $x$ is a critical point of $f$, we have $\Delta_{\M,2} f (x) = \Delta_\M f (x)$.
	Thus, we obtain \eqref{Berry:norcrit:var-res1}.
	
	To show \eqref{Berry:norcrit:var-res2}, similar to the proof Lemma~\ref{unif:sto:var}, for all sufficiently small $\e$, we have
	\begin{align*}
		&\quad \frac{1}{\e^d} \int_\M K^2 \left( \frac{\|u-x\|_{\R^D}}{\e} \right) \frac{f(u)-f(x)}{\e^2} d\P(u)
		\\&= \int_{\R^d} K^2(\|z\|_{\R^d}) \Big( \e^{-1} z^\top \nabla_{\R^d} \tilde{f}_x (0) + \frac{1}{2} z^\top \big[ \nabla_{\R^d}^2 \tilde{f}_x(0) \big] z + \e O(\|z\|_{\R^d}^3) \Big)
		\\&\quad \times \Big( \tilde{\rho}_x (0) + \e z^\top \nabla \tilde{\rho}_x (0) + \e^2 O(\|z\|_{\R^d}^2) \Big) dz + O(\e^2)
		\\&= \frac{\rho(x)}{2} \int_{\R^d} K^2(\|z\|_{\R^d}) \left( z^\top \big[\nabla_{\R^d}^2 \tilde{f}_x (0) \big] z\right) dz + O(\e).
	\end{align*}
	Here, we use that $x$ is a critical point, meaning that $\nabla_{\R^d} \tilde{f}_x (0)=0.$
	
	Now using Lemma \ref{smoothing:aux:quadratic}, the integral in the leading term can be simplified as
	\begin{align*}
		\int_{\R^d} K^2(\|z\|_{\R^d}) \left( z^\top \big[\nabla_{\R^d}^2 \tilde{f}_x (0) \big] z\right) dz
		= \frac{\tr \nabla_{\R^d}^2 \tilde{f}_x (0) }{2(4\pi)^{d/2}}
		= \frac{-\Delta_\M f(x) }{2(4\pi)^{d/2}}.
	\end{align*}
	Therefore, we obtain \eqref{Berry:norcrit:var-res2}.
	
	Similarly, we also have
	\begin{align*}
		\frac{1}{\e^d} \int_\M K^2 \left( \frac{\|u-x\|_{\R^D}}{\e} \right) \left(\frac{f(u)-f(x)}{\e^2} \right)^2 d\P(u)
		= \frac{\rho(x)}{4} \int_{\R^d} K^2(\|z\|_{\R^d}) \left( z^\top \big[\nabla_{\R^d}^2 \tilde{f}_x (0) \big] z\right)^2 dz + O(\e),
	\end{align*}
	and
	\begin{align*}
		\int_{\R^d} K^2(\|z\|_{\R^d}) \left( z^\top \big[\nabla_{\R^d}^2 \tilde{f}_x (0) \big] z\right)^2 dz
		&= \frac{2 \| \nabla_{\R^d}^2 \tilde{f}_x (0)\|_\F^2 + |\tr \nabla_{\R^d}^2 \tilde{f}_x (0)|^2 }{4(4\pi)^{d/2}}
		\\&= \frac{2 \|\nabla_\M^2 f (x)\|^2 + |\Delta_\M f(x)|^2 }{4(4\pi)^{d/2}}.
	\end{align*}
	These imply \eqref{Berry:norcrit:var-res3}, and we complete the proof.
\end{proof}

Since the proof of the corollary is analogous to that of Theorem~\ref{Berry:nor}, we only highlight the differences.

\begin{proof}[Proof of Theorem \ref{Berry:norcrit}]
	Assume $m=2$. Similar to the proof \eqref{Berry:nor-res1} in Theorem~\ref{Berry:nor}, we have
	\begin{equation*}
	\overline{\bfZ}_{n,\e,f} = \sqrt{n \e^{d-4}} 
	\begin{pmatrix} 
		\TT_{n,\e} [f] (x_1) - \TT_{\e} [f] (x_1) \\
		\TT_{n,\e} [f] (x_2) - \TT_{\e} [f] (x_2)
	\end{pmatrix} = S_n + E_n S_n,
	\end{equation*}
	where for $i=1,\ldots,n$ and $j=1,2$,
	\begin{align*}
	Z_i &= (Z_{i1}, Z_{i2})^\top, \quad S_n = \frac{1}{\sqrt{n}} \sum_{i=1}^n Z_i, \quad \Sigma'_{f,\e} = \var_\P (Z_1), \\
	Z_{ij} &= \frac{1}{\T_\e [1](x_j)} \left( V_{ij} - \frac{\TT_\e[f](x_j)-f(x_j)}{\e^2} U_{ij} \right), \\
	U_{ij} &= \sqrt{\e^d} \left( \frac{1}{\e^d} K \left( \frac{\|X_i-x_j\|_{\R^D}}{\e} \right)
	- \E_\P \left[ \frac{1}{\e^d} K \left( \frac{\|X - x_j\|_{\R^D}}{\e} \right) \right] \right), \\
	V_{ij} &= \sqrt{\e^d} \left( \frac{1}{\e^d} K \left( \frac{\|X_i - x_j\|_{\R^D}}{\e} \right) \frac{f(X_i)-f(x_j)}{\e^2}
	- \E_\P \left[ \frac{1}{\e^d} K \left( \frac{\|X - x_j\|_{\R^D}}{\e} \right) \frac{f(X)-f(x_j)}{\e^2} \right] \right), \\
	R_n &= \diag \left( \frac{1}{\T_{n,\e}[1](x_1)} - \frac{1}{\T_{\e}[1](x_1)}, \frac{1}{\T_{n,\e}[1](x_2)} - \frac{1}{\T_{\e}[1](x_2)} \right).
	\end{align*}
	
	Using Theorem~\ref{unif:bias}, Lemmas~\ref{unif:sto:var} and \ref{Berry:norcrit:var}, and their proofs, it can be shown that for all sufficiently small $\e$,
	\begin{gather*}
	\E_\P U_{ij} = 0, \quad 
	\E_\P V_{ij} = 0, \\	
	\frac{1}{\T_\e [1](x_j)} = \frac{1}{\rho(x_j)} + O(\e^2), \quad
	\frac{\TT_\e[f](x_j)-f(x_j)}{\e^2} = -\frac{1}{2} \Delta_{\M} f(x_j) + O(\e), \\
	\E_\P U_{ij}^2 = \frac{\rho(x_j)}{(4\pi)^{d/2}} + O(\e), \quad
	\E_\P V_{ij}^2 = \frac{\rho(x) (2\| \nabla^2_{\M} f (x_j) \|^2 + \left|\Delta_\M f (x_j)\right|^2)}{16(4\pi)^{d/2}} + O(\e^{2\wedge d}), \\
	\E_\P U_{i1} U_{i2} = O(\e^d), \quad
	\E_\P V_{i1} V_{i2} = O(\e^d), \\
	\E_\P U_{i1} V_{i1} = -\frac{\rho(x_j) \Delta_\M f(x_j) }{4(4\pi)^{d/2}} + O \left( \e \right), \quad
	\E_\P U_{i1} V_{i2} = O(\e^d), \\
	\E_\P \left| U_{ij} \right|^3 \leq \frac{8 \Theta_d \rho(x_j) + O(\e^{2\wedge d})}{\sqrt{\e^d}}, \quad
	\E_\P \left| V_{ij} \right|^3 \leq \frac{4 \Theta_d'' \rho(x_j) \left\|\nabla^2_\M f(x_j)\right\|^3 + O(\e)}{\sqrt{\e^d}},			
	\end{gather*}
	where $\Theta_d = \int_{\R^d} K^3 (\|z\|_{\R^d}) dz $ and $\Theta_d'' = \int_{\R^d} K^3(\|z\|_{\R^d})\|z\|_{\R^d}^6 dz$.
	
	We see that
	\begin{align*}
	\var_\P (Z_{ij}) 
	&= \frac{1}{\rho^2(x_j)} \bigg[ \frac{\rho(x_j) (2\| \nabla^2_{\M} f (x_j) \|^2 + \left|\Delta_\M f (x_j)\right|^2)}{16(4\pi)^{d/2}} 
	\\&\quad -2 \left(-\frac{1}{2} \Delta_{\M} f(x_j)\right) \left( -\frac{\rho(x_j)\Delta_\M f(x_j) }{4(4\pi)^{d/2}} \right)
	+ \left(-\frac{1}{2} \Delta_{\M} f(x_j)\right)^2 \frac{\rho(x_j)}{(4\pi)^{d/2}} \bigg] + O(\e)
	\\&= \frac{2\| \nabla_\M^2 f(x_j) \|_\F^2 + |\Delta_\M f(x_j)|^2}{16 \rho(x_j) (4\pi)^{d/2}} + O(\e).
	\end{align*}
	Therefore, by following the same argument as in the proof of Theorem~\ref{Berry:nor}, the proof is complete.
\end{proof}

% ====== Derivatives: unnormalized ======
\subsection{Proof of Theorem \ref{deriv:unnor}}
The proof is carried out separately for the gradient and the Hessian.

% ===== (1) Gradients: unnormalized =====
\subsubsection{Proof of Theorem \ref{deriv:unnor} for gradients}

We first outline the main idea of the proof.
By the definition of the gradient norm, we have
\begin{align*}
	\left\| \nabla_\M \T_{\e} [f] (x) - \nabla_\M (\rho f) (x) \right\|
	= \left\| \nabla_{\R^d} (\T_{\e} [f] \circ \Exp_x) (0) - \nabla_{\R^d} \tilde{(\rho f)}_x (0) \right\|_{\R^d}.
\end{align*}
To simplify $\nabla_{\R^d} (\T_{\e} [f] \circ \Exp_x) (0)$,
if we view $\T_{\e} [f]$ as a function on $\R^D$ and $\Exp_x$ as a map from $\R^d$ to $\R^D$, 
then the chain rule gives
$$ \nabla_{\R^d} (\T_{\e} [f] \circ \Exp_x) (0) = J(x)^\top \nabla_{\R^D} \T_{\e} [f] (x), $$
where $J(x)$ is as defined earlier.
Moreover, $\nabla_{\R^D} \T_{\e} [f] (x)$ can be expressed explicitly since $x$ appears in the kernel only. 
Specifically, we have
\begin{align*}
	\nabla_{\R^D} \T_{\e} [f]
	%	&= \nabla_{\R^D} \frac{1}{\e^d} \int_\M K \left( \frac{\|u-\cdot\|_{\R^D}}{\e} \right) f(u) \dP (u)
	&= \frac{1}{\e^d} \int_\M \left[ \nabla_{\R^D} K \left( \frac{\|u-\cdot\|_{\R^D}}{\e} \right) \right] f(u) \dP (u)
	\\&= \frac{1}{\e^{d+1}} \int_\M K \left( \frac{\|u-\cdot\|_{\R^D}}{\e} \right) \frac{u-\cdot}{\e} f(u) \dP (u),
\end{align*}
and hence,
\begin{equation*}
	\nabla_{\R^D} \T_{\e} [f] (x) = \frac{1}{\e^{d+1}} \int_\M K \left( \frac{\|u-x\|_{\R^D}}{\e} \right) \frac{u-x}{\e} f(u) \dP (u),
\end{equation*}
where the integral is understood componentwise.
Therefore, we obtain 
\begin{align*}
	\nabla_{\R^d} (\T_{\e} [f] \circ \Exp_x) (0)
	%	&=J(x)^\top \nabla_{\R^D} \T_{\e} [f] (x) \\
	&= \frac{1}{\e^{d+1}} \int_\M K \left( \frac{\|u-x\|_{\R^D}}{\e} \right) J(x)^\top \frac{u-x}{\e} f(u) \dP (u).
\end{align*}
Our strategy is to compare this expression with $\nabla_{\R^d} \tilde{(\rho f)}_x (0)$, which yields the desired convergence result.

The idea of viewing smoothed functions as defined on the ambient space will also be used for the gradients of other smoothed functions, as well as their Hessians.

\begin{proof}[Proof of part (a) of Theorem \ref{deriv:unnor} for gradients]
	Using the Taylor expansion of the exponential map stated in (c) of Proposition~\ref{smoothing:aux:expansion}, for $u= \Exp_x (\e z)$ with sufficiently small $\e$ and $z \in \R^d$, we have
	\begin{align*}
		J(x)^\top \Big(\frac{u-x}{\e}\Big) 
		&= J(x)^\top \frac{\e J(x) z + \frac{\e^2}{2} \sff_x(z,z) + \e^3 \alpha_{x,3} (z) + \e^4 O(\|z\|_{\R^d}^4)}{\e}
		\\&= J(x)^\top J(x) z + \frac{\e}{2} J(x)^\top \sff_x(z,z) + \e^2 J(x)^\top \alpha_{x,3} (z) + \e^3 O(\|z\|_{\R^d}^4).
		\end{align*}
	Noting that the columns of $J(x)$ are orthonormal and that they are perpendicular to $\sff_x(z,z)$, we simplify the above to
	\begin{equation*} \label{derivative:grad:unnormal-eq1}
		J(x)^\top \Big(\frac{u-x}{\e}\Big) = z + \e^2 J(x)^\top \alpha_{x,3} (z) + \e^3 O(\|z\|_{\R^d}^4).
	\end{equation*}
	
	Using this and arguments analogous to those in the proof of \eqref{unif:bias-res1} in Theorem~\ref{unif:bias}, for all sufficiently small $\e$, we obtain
	\begin{align*}
		\nabla_{\R^d} (\T_{\e} [f] \circ \Exp_x) (0)
		&= \frac{1}{\e^{d+1}} \int_\M K \left( \frac{\|u-x\|_{\R^D}}{\e} \right) J(x)^\top \Big(\frac{u-x}{\e}\Big) f(u) d\P(u)
		\\&= \frac{1}{\e} \int_{\R^d} K \left(\|z\|_{\R^d}\right) \cA_1 \cA_2 \cA_3 \cA_4 dz + O(\e^2),
	\end{align*}
	where
	\begin{align*}
		\cA_1 &= 1 + \frac{\e^2}{24} \sff_x (z,z)^\top \sff_x (z,z) + \e^3 O(\|z\|_{\R^d}^4), \\
		\cA_2 &= z + \e^2 R_3 (z) + \e^3 O(\|z\|_{\R^d}^4), \\
		\cA_3 &= (\widetilde{\rho f})_x (0) + \e z^\top \nabla_{\R^d} (\widetilde{\rho f})_x (0) 
		+ \frac{\e^2}{2} z^\top [\nabla_{\R^d}^2 (\widetilde{\rho f})_x (0)] z + \e^3 O(\|z\|_{\R^d}^3),\\
		\cA_4 &= 1 -\frac{\e^2}{6} z^\top \Ric_x z + \e^3 O(\|z\|_{\R^d}^3).
	\end{align*}
	By symmetry of the kernel, all odd-order terms in $z$ vanish after integration.
	Therefore, the above expression simplifies to
	\begin{align*}
		\nabla_{\R^d} (\T_{\e} [f] \circ \Exp_x) (0)
		&= \frac{1}{\e} \cdot \e \left( \int_{\R^d} K (\|z\|_{\R^d}) z z^\top dz \right) \nabla_{\R^d}
		(\widetilde{\rho f})_x (0) + O(\e^2)
		\\&= \nabla_{\R^d} (\widetilde{\rho f})_x (0) + O(\e^2).
	\end{align*}
	This completes the proof.
\end{proof}

\begin{proof}[Proof of part (b) of Theorem \ref{deriv:unnor} for gradients]
	From
	$$ \nabla_{\R^D} \T_{n,\e} [f] (x) = \frac{1}{\e^{d+1}} \sum_{i=1}^{n} K\left(\frac{\|X_i-x\|_{\R^D}}{\e}\right) \frac{X_i-x}{\e} f(X_i), $$
	we can write
	\begin{align*}
		\sup_{x \in \M} \left\| \nabla_\M \T_{n,\e} [f] (x) - \nabla_\M \T_{\e} [f] (x)  \right\|
		= \frac{1}{\e^{d+1}} \sup_{g \in \msF_\e} 
		\left\| \frac{1}{n} \sum_{i=1}^n g(X_i) - \E_\P g(X) \right\|_{\R^d}, 
	\end{align*}
	where 
	$\msF_\e = \set{ g_{x} (\cdot) = (g_{x,j}(\cdot))_{j=1}^d = K\left( \frac{\|\cdot - x\|_{\R^D}}{\e} \right) J(x)^\top \left(\frac{\cdot - x}{\e}\right) f(\cdot) \in \R^d : x \in \M }. $ 
	To complete the proof, we apply arguments analogous to those in the proof of Theorem~\ref{unif:sto}.
	
	First, we have
	\begin{equation*}
		\sup_{x \in \M} \max_{j=1,\dots,d} \|g_{x,j}\|_\infty
		\leq \sup_{g \in \msF_\e} \|g\|_\infty
		\leq B \coloneqq \frac{\|J\|_\infty \|f\|_\infty \diam(\M)}{\e},
	\end{equation*}
	where $\diam(\M)$ denotes the diameter of $\M$. Next, for all sufficiently small $\e$,
	\begin{align*}
		\E_\P g_{x,j}^2 (X)
		&= \E_\P \left[K^2\left(\frac{\|X-x\|_{\R^D}}{\e}\right) J_j (x)^\top \left(\frac{X-x}{\e}\right) \left(\frac{X-x}{\e}\right)^\top J_j(x) f^2(X)\right]	\\
		&\leq \e^d \|\rho\|_\infty \|f\|^2_\infty \left(\int_{\R^d} K(\|z\|_{\R^d}) z_j^2 dz + O(\e^2) \right) \\
		&\leq \e^d \|\rho\|_\infty \|f\|^2_\infty \left(1 + O(\e^2) \right).
	\end{align*}
	So, we can find $\e_2, C_2 >0$ such that we have for $\e <\e_2$,
	\begin{align*}
		\sup_{x \in \M} \max_{j=1,\dots,d} \E_\P g^2_{x,j}(X) \leq \sigma^2 \coloneqq C_2 \e^d.
	\end{align*}
	
	Lastly, for all sufficiently small $\e$, we obtain
	\begin{align*}
		\| g_x (u) - g_{x'} (u) \|_{\R^d}
		&\leq \left( \frac{\|J\|_\infty + \|\nabla_{\R^D} J\|_\infty \diam(\M)}{\e} + \frac{\|J\|_\infty \diam(\M)}{\e^2} \right) \|f\|_\infty \|x - x'\|_{\R^D}
		\\
		&\leq \frac{2 \|J\|_\infty \|f\|_\infty  \diam(\M)}{\e^2}  \dist_\M (x,x').
	\end{align*}
	Hence, there exists $\e_2''>0$ such that whenever $\e < \e''_2$ and $\eta < B$, we have
	\begin{align*}
		\sup_\Q \mathscr{N} \left( \msF_\e, L^2(\Q), \eta \right)
		\leq & \left( \frac{A_\M \frac{2 \|f\|_\infty \|J\|_\infty \diam(\M)}{\e^2} }{\eta} \right)^d .
	\end{align*}
	Therefore, we can find $\e_0$, $C_0 >0$ such that if $\e >\e_0$ we have
	\begin{align*}
		&\quad = \sup_{x \in \M} \left\| \nabla_{\R^d} (\T_{n,\e} [f] \circ \Exp_x) (0)
		- \nabla_{\R^d} (\T_{\e} [f] \circ \Exp_x)  (0) \right\|_{\R^d}
		\\ &\leq
		\frac{1}{\e^{d+1}}
		\cdot C_0 \left( \sqrt{  \frac{\e^d \log(1/\e)}{n} }  + \sqrt{ \frac{\e^d \log (1/\delta)}{n}}
		+ \frac{\log(1/\e)}{n \e} + \frac{\log (1/\delta) }{n \e} \right)
		\\ &=
		C_0 \left( \sqrt{ \frac{\log(1/\e)}{n\e^{d+2}} } + \sqrt{\frac{\log (1/\delta)}{n \e^{d+2}}}
		+ \frac{\log(1/\e)}{n \e^{d+2}} + \frac{\log (1/\delta) }{n \e^{d+2}} \right).
	\end{align*}
	This completes the proof.
\end{proof}

% ===== (2) Hessians: unnormalized =====
\subsubsection{Proof of Theorem \ref{deriv:unnor} for Hessians}

Hereafter, we denote by $[v]_j$ the $j$th component of a vector $v$, and by $[A]_{ij}$ the $(i,j)$-entry of a matrix $A$. By the definition of the Hessian norm, we have
\begin{align*}
	\left\| \nabla_\M^2 \T_{\e} [f] (x) - \nabla_\M^2 (\rho f) (x) \right\|
	= \left\| \nabla_{\R^d}^2 (\T_{\e} [f] \circ \Exp_x) (0) - \nabla_{\R^d}^2 \tilde{(\rho f)}_x (0) \right\|_{\R^d}.
\end{align*}
Similar to the case of gradients, if we view $\T_{\e} [f]$ as a function on $\R^D$ and $\Exp_x$ as a map from $\R^d$ to $\R^D$, 
then the chain rule gives
\begin{equation*}
	\nabla_{\R^d}^2 (\T_{\e} [f] \circ \Exp_x) (0) 
	= J(x)^\top \nabla_{\R^D}^2 \T_{\e} [f] J(x)
	+ \sum_{k=1}^D \Big[ \nabla_{\R^D} \T_\e[f] \Big]_k \nabla_{\R^d}^2 \left[ \Exp_x \right]_k (0).
\end{equation*}
Here, differentiating $\T_{\e} [f]$ with respect to $x$ yields
\begin{align*}
	\nabla_{\R^D}^2 \T_{\e} [f] (x)
	&= \frac{1}{\e^{d+2}} \int_{\M} K\left(\frac{\|u-x\|_{\R^D}}{\e}\right) \left[\left(\frac{u-x}{\e}\right)\left(\frac{u-x}{\e}\right)^\top - I_{D}\right] f(u) d\P(u), \\
	\nabla_{\R^D} \T_{\e} [f] (x)
	&= \frac{1}{\e^{d+1}} \int_{\M} K\left(\frac{\|u-x\|_{\R^D}}{\e}\right) \frac{u-x}{\e} f(u) d\P(u).
\end{align*}

Moreover, noting that $\sff_x$ is symmetric and bilinear, we can write
\begin{equation*}
	\sff_x (z,z) = \sum_{i,j=1}^d z_i z_j b_{ij}, \quad\text{where}\quad b_{ij} := \sff_x (e_i, e_j) \in (T_x \M)^\perp \subset \R^D,
\end{equation*}
with $\{e_1, \ldots, e_d\}$ denoting the standard basis of $\R^d$.
Using this and the property of the second derivative of the exponential map,
we have
$$ \left[ \nabla_{\R^d}^2 \left[ \Exp_x \right]_k (0) \right]_{ij} = [ b_{ij} ]_k. $$

\begin{proof}[Proof of part (a) of Theorem \ref{deriv:unnor} for Hessians]
	First, using arguments similar to those in the gradient case,
	for all sufficiently small $\e$, we can write
	\begin{align*}
		&\quad J(x)^\top \nabla^2_{\R^D} \T_{\e} [f] (x) J(x)
		\\&= \frac{1}{\e^{d+2}} \int_{\M} K\left(\frac{\|u-x\|_{\R^D}}{\e}\right) \left[ J(x)^\top\left(\frac{u-x}{\e}\right)\left(\frac{u-x}{\e}\right)^\top J(x) - I_{d}\right] f(u) d\P(u),
		\\&= \cA_1 + \cA_2 + \cA_3 + \cA_4 + \cA_5 + O(\e^2),
	\end{align*}
	where
	\begin{align*}
		\cA_1 &= \frac{\rho(x) f(x)}{\e^2} \int_{\R^d} K(\|z\|_{\R^d}) (zz^\top - I_d) dz, \\
		\cA_2 &= \frac{1}{2} \int_{\R^d} K(\|z\|_{\R^d}) (zz^\top-I_d) \left( z^\top \nabla^2_{\R^d}
		(\widetilde{\rho f})_x (0) z \right) dz, \\
		\cA_3 &= \frac{\rho(x) f(x)}{24}  \int_{\R^d} K(\|z\|_{\R^d}) \sff_x (z,z)^\top \sff_x (z,z)
		(z z^\top - I_d) dz, \\
		\cA_4 &= \frac{\rho(x) f(x)}{6} \int_{\R^d} K(\|z\|_{\R^d}) \left( J(x)^\top \alpha_{x,3} (z) z^\top
		+ z \alpha_{x,3} (z)^\top J(x) \right) dz, \\
		\cA_5 &= -\frac{\rho(x) f(x)}{6} \int_{\R^d} K(\|z\|_{\R^d}) (z z^\top - I_d) \left( z^\top \Ric_x z \right) z dz.
	\end{align*}
	Also, by an analogous expansion, we have
	\begin{align*}
		\sum_{k=1}^D \Big[ \nabla_{\R^D} \T_\e[f] \Big]_k \nabla_{\R^d}^2 \left[ \Exp_x \right]_k (0)
		=& \cA_6 + \cA_7 + O(\e^2)
	\end{align*}
	where
	\begin{align*}
		\cA_6 &= \frac{\rho(x) f(x)}{2} \sum_{\ell=1}^D  \left[ \int_{\R^d} K (\|z\|_{\R^d})  \sff_{x} (z,z) dz \right]_{\ell} \nabla_{\R^d}^2 \left[ \Exp_x \right]_\ell, \\
		\cA_7 &=  \sum_{\ell=1}^D \left[ J(x) \nabla_{\R^d} \widetilde{(\rho f)}_x (0) \right]_{\ell} \nabla_{\R^d}^2 \left[ \Exp_x \right]_\ell.
	\end{align*}
	Combining the above yields the expansion
	\begin{align*}
		\nabla_{\R^d}^2 (\T_\e [f] \circ \Exp_x) (0)
		&= \cA_1 + \cA_2 + \cA_3 + \cA_4 + \cA_5 + \cA_6 + \cA_7 + O(\e^2),
	\end{align*}
	and we proceed to simplify the right-hand side.
	
	\begin{myenumerate}
	\myitem{Evaluating $\cA_1$}
	Since $\int_{\R^d} K(\|z\|_{\R^d}) z z^\top dz = I_d$, we have $\cA_1 = 0.$
	
	\myitem{Evaluating $\cA_2$}
	Using Lemma~\ref{smoothing:aux:quadratic}, we obtain $\cA_2= \nabla_{\R^d}^2 (\widetilde{\rho f})_x (0).$
	
	\myitem{Evaluating $\cA_3$}
	Using $ \sff_x (z,z)^\top \sff_x (z,z) = \sum_{i,j,k,\ell=1}^d z_i z_j z_k z_\ell b_{ij}^\top b_{k\ell}, $ and the moments of $K$, it can be shown
	\begin{align*}
		\left[ \cA_3 \right]_{ij}
		&= \frac{\rho(x) f(x)}{24}  \left[ \int_{\R^d} K(\|z\|_{\R^d}) \sff_x (z,z)^\top \sff_x (z,z)  (z z^\top - I_d) dz \right]_{ij}
		\\&= \frac{ \rho(x) f(x) }{6} \sum_{k=1}^d (b_{ij}^\top b_{kk} + 2 b_{ik}^\top b_{jk}).
	\end{align*}
	
	\myitem{Evaluating $\cA_4$}
	By using (b) in Proposition \ref{smoothing:aux:expansion}, and the moments of $K$, we can have
	\begin{align*}
		\left[ \cA_4 \right]_{ij}
		&= -\frac{ \rho(x) f(x) }{6} \left[ \frac{\rho(x) f(x)}{6} \int_{\R^d} K(\|z\|_{\R^d}) \left( J(x)^\top \alpha_{x,3} (z) z^\top  + z \alpha_{x,3} (z)^\top J(x) \right) dz \right]_{ij}
		\\&= -\frac{ \rho(x) f(x)}{3}\sum_{k=1}^d (b_{ij}^\top b_{kk} + 2b_{ik}^\top b_{jk}).
	\end{align*}
	
	\myitem{Evaluating $\cA_5$}
	Directly from Lemma~\ref{smoothing:aux:quadratic}, we have
	$\cA_5 = -\frac{\rho(x)f(x)}{3} \Ric_x. $ 
	Now, in normal coordinates, the entries of the Ricci curvature tensor can be expressed as the sum of the Riemann curvature tensors:
	$$ \Ric_{x,ij} = \sum_{k=1}^d \mathrm{Rm}_{x,kijk},$$
	and since $\M$ is embedded in $\R^D$, the Gauss equation (see Theorem 8.5 of \cite{lee2018introduction}) gives
	$ \mathrm{Rm}_{x,ijk \ell} = b_{i\ell} b_{jk} - b_{ik}^\top b_{j\ell}.$ 
	Combining these, we have
	\begin{equation*}
		\left[ \cA_5 \right]_{ij}
		= -\frac{\rho(x)f(x)}{3} [\Ric_{x}]_{ij}
		= -\frac{\rho(x)f(x)}{3} \sum_{k=1}^{d} \left( b_{ij}^\top b_{kk} - b_{ik}^\top b_{jk} \right). 
	\end{equation*}
	
	\myitem{Evaluating $\cA_6$}
	Since
	$\int_{\R^d} K(\|z\|_{\R^d}) \sff_x (z,z) dz = \int_{\R^d} K(\|z\|_{\R^d}) \sum_{i,j=1}^{d} z_i z_j b_{ij} dz
	=  \sum_{k=1}^d b_{kk}, $
	we have
	\begin{align*}
		\left[ \cA_6 \right]_{ij}
		= \frac{ \rho(x) f(x)}{2} \sum_{\ell=1}^{D} \sum_{k=1}^{d} \left[ b_{kk} \right]_\ell \left[ b_{ij} \right]_\ell
		= \frac{ \rho(x) f(x)}{2} \sum_{k=1}^{d} b_{ij}^\top b_{kk}.
	\end{align*}
	
	\myitem{Evaluating $\cA_7$}
	Note that $J(x) \nabla_{\R^d} (\widetilde{\rho f})_x (0) \in T_x \M$. 
	Therefore, for all $i,j=1,\dots d$, we have
	$$ \sum_{\ell=1}^{D} \left[ J(x) \nabla_{\R^d} (\widetilde{\rho f})_x (0) \right]_{\ell} \left[ b_{ij} \right]_{\ell}
	=  b_{ij}^\top J(x) \nabla_{\R^d} (\widetilde{\rho f})_x (0) = 0,$$
	and this implies $\cA_7 = 0$.
	\end{myenumerate}
	
	From the above results, for all $i,j=1,\dots,d$, we see that
	$ \left[ \cA_3 + \cA_4 + \cA_5 +\cA_6 \right]_{ij}=0 .$
	Therefore, we have
	$$ \nabla_{\R^d}^2 (\T_\e [f] \circ \Exp_x) (0) =  \nabla_{\R^d}^2 (\widetilde{\rho f})_x (0)  + O(\e^2), $$
	and this completes the proof.
\end{proof}

\begin{proof}[Proof of part (b) of Theorem \ref{deriv:unnor} for Hessians]
	First, express the following difference as
	\begin{align*}
		\nabla_{\R^d}^2 (\T_{n,\e} [f] \circ \Exp_x) (0) - \nabla_{\R^d}^2 (\T_{\e} [f] \circ \Exp_x ) (0)
		= \cA_1 + \cA_2,
	\end{align*}
	where
	\begin{align*}
		\cA_1 &= J(x)^\top \nabla_{\R^D}^2 \T_{n,\e} [f] (x) J(x) - J(x)^\top \nabla_{\R^D}^2 \T_{\e} [f] (x) J(x), \\
		\cA_2 &=\sum_{\ell=1}^{D} \left[ \nabla_{\R^D} \T_{n,\e} [f] (x) - \nabla_{\R^D} \T_{\e} [f] (x) \right]_\ell \nabla_{\R^d}^2 [\Exp_x]_{\ell} (0).
	\end{align*}
	
	\begin{myenumerate}
		\myitem{Bounding $\cA_1$}
		From
		$$ \nabla^2_{\R^D} \T_{n,\e} [f] (x) = \frac{1}{\e^{d+2}} \sum_{i=1}^{n} K\Big(\frac{\|X_i-x\|_{\R^D}}{\e}\Big) \Big\{ \Big(\frac{X_i-x}{\e}\Big)\Big(\frac{X_i-x}{\e}\Big)^\top - I_D \Big\} f(X_i), $$
		write
		$$ \sup_{x \in M} \left\| \cA_1 \right\|_\F 
		= \frac{1}{\e^{d+2}} \sup_{g \in \msF_\e}
		\left\| \frac{1}{n} \sum_{i=1}^n g(X_i) - \E_\P g(X) \right\|_\F $$
		where 
		$$ \msF_\e = \Big\{ g_{x} (\cdot) = (g_{x,ij}(\cdot))_{i,j=1,\dots,d}
		= K\Big( \frac{\|\cdot - x\|_{\R^D}}{\e} \Big) \Big\{ J(x)^\top \Big(\frac{\cdot -x}{\e}\Big)\Big(\frac{\cdot - x}{\e}\Big)^\top J(x) - I_d \Big\} f(\cdot) : x \in \M \Big\}.
		$$        
		Then, we have
		$$ \sup_{x \in \M} \max_{i,j=1,\dots,d} \|g_{x,ij}\|_\infty
		\leq \sup_{g \in \msF_\e} \|g\|_\infty
		\leq B \coloneqq \left( \frac{\|J\|^2_\infty \diam^2(\M)}{\e^2} + d \right) \|f\|_\infty. $$
		Also, for all sufficiently small $\e$,
		\begin{align*}
			\E_\P g^2_{x,ij} (X) 
			&=  \E_\P \Big[K^2\Big(\frac{\|X-x\|_{\R^D}}{\e}\Big) \Big\{ J_{i} (x)^\top \Big(\frac{X-x}{\e}\Big)
			\Big(\frac{X-x}{\e}\Big)^\top J_{j} (x) \Big\}^2 f^2(X)\Big]
			\\ &\leq \e^d \|\rho\|_\infty \|f\|^2_\infty \left(\int_{\R^d} K(\|z\|_{\R^d}) z_i^2 z_j^2 dz + O(\e^2) \right)
			\\ &= \e^d \|\rho\|_\infty \|f\|^2_\infty \left( 3 + O(\e^2) \right)
		\end{align*}
		So, we can find $\e_1', C_1' >0$ such that whenever $\e <\e_1'$ we have
		\begin{align*}
			\sup_{x \in \M} \max_{i,j=1,\dots,d} \E_{P} g^2_{x,ij}(X)
			\leq \sigma^2
			\coloneqq C_1' \e^d
		\end{align*}
		Also, we see that for all sufficiently small $\e$,
		\begin{align*}
			\| g_x (u) - g_{x'} (u) \|_\F
			\leq \frac{2 \|J\|_\infty \|f\|_\infty \diam^2(\M) }{\e^3} \dist_\M (x,x').
		\end{align*}
		Thus, there exists $\e''_2 > 0$ such that whenever $\e < \e_2''$ and $\eta < B$, 
		\begin{align*}
			\sup_\Q \mathscr{N} \left( \msF_\e, L^2(\Q), \eta \right)
			\leq&
			\left( \frac{A_{\M} \frac{2 \|J\|_\infty \|f\|_\infty \diam^2(\M) }{\e^3} }{\eta} \right)^{d/2}
		\end{align*}
		Therefore, by applying Proposition \ref{smoothing:aux:VC} component-wisely,
		we can find $\e_1$, $C_1>0$ such that if $\e < \e_1$, we have
		\begin{align*}
			\sup_{x \in \M} \left\| \cA_1 \right\|_\F
			&\leq \frac{1}{\e^{d+2}}
			\cdot C_1 \left( \sqrt{  \frac{\e^d \log(1/\e)}{n} } + \sqrt{ \frac{\e^d \log (1/\delta)}{n}}
			+ \frac{\log(1/\e)}{n \e^2}	+ \frac{\log (1/\delta) }{n \e^2} \right)
			\\&= C_1 \left( \sqrt{ \frac{\log(1/\e)}{n\e^{d+4}} } + \sqrt{\frac{\log (1/\delta)}{n \e^{d+4}}}
			+ \frac{\log(1/\e)}{n \e^{d+4}} + \frac{\log (1/\delta) }{n \e^{d+4}} \right)
		\end{align*}
		
		\myitem{Bounding $\cA_2$}
		For $i,j=1,\dots,d$, write
		\begin{align*}
			\left[ \cA_2 \right]_{ij}
			= \sum_{\ell=1}^{D} \left[ \nabla_{\R^D} \T_{n,\e} [f] (x) - \nabla_{\R^D} \T_{\e} [f] (x) \right]_\ell
			\left[b_{ij}\right]_\ell
			= b_{ij}^\top \left( \nabla_{\R^D} \T_{n,\e} [f] (x) - \nabla_{\R^D} \T_{\e} [f] (x) \right)
		\end{align*}
		to express
		\begin{align*}
			\sup_{x \in \M} \left| \left[ \cA_2 \right]_{ij} \right|
			&= \sup_{g \in \msF_{\e,{ij}}} \frac{1}{\e^{d+1}} \left| \frac{1}{n} \sum_{k=1}^{n} g(X_k)
			- \E_\P g (X) \right|,
		\end{align*}
		where 
		$\msF_{\e,{ij}} = \Big\{ g_x(\cdot) = K\Big(\frac{\|\cdot - x\|_{\R^D}}{\e}\Big) b_{ij}^\top \Big(\frac{\cdot - x}{\e}\Big) f(\cdot) : x \in \M\Big\}.$
		%       \end{align*}
		Similar to the proof of part (b) in Theorem \ref{deriv:unnor}, for all sufficiently small $\e$,
		we have
		\begin{align*}
			\sup_{g \in \msF_{\e,{ij}}} \|g\|_\infty 
			&\;\leq\;  \frac{C_{\sff} \|f\|_\infty \mathrm{diam}(\M)}{\e}, \\
			\sup_{g \in \msF_{\e,{ij}}} \E_\P g^2(X)
			&\;\leq\;  \e^d C_{\sff}^2 \|\rho\|_\infty \|f\|^2_\infty \left( 1 + O(\e^2) \right), \\
			\|g_x - g_{x'}\|_\infty
			&\;\leq\;  \frac{C_\sff \|f\|_\infty \diam(\M)}{\e^2} \dist_\M (x,x'), \\
			\sup_Q \mathscr{N} \left( \msF_{\e,{ij}}, L^2(Q), \eta \right)
			& \;\leq\;  \left( \frac{ A_\M \frac{C_\sff \|f\|_\infty \diam(\M)}{\e^2} }{\eta} \right)^d,
		\end{align*}
		where $ C_{\sff} = \max_{i,j=1,\cdots,d} \| b_{ij} \|_\F $.        
		So, we can find $\e_2$, $C_2 >0$ such that if $\e < \e_2$ we have
		\begin{align*}
			\max_{i,j=1,\dots,d} \sup_{x \in \M} \left| \left[\cA_2\right]_{ij} \right|
			&\leq C_2 \left( \sqrt{ \frac{\log(1/(\e\wedge\delta))}{n\e^{d+2}} }  
			+ \frac{\log(1/(\e\wedge\delta))}{n \e^{d+2}} \right)
		\end{align*}
	\end{myenumerate}
	
	Therefore, combining the above two upper bounds completes the proof.
\end{proof}

% ===== Derivatives: normalized =====
\subsection{Proof of Theorem \ref{deriv:nor}}

% ===== (1) Gradients =====
\subsubsection{Proof of Theorem \ref{deriv:nor} for gradients}

\begin{proof}[Proof of part (a) of Theorem \ref{deriv:nor} for gradients]
	By the product rule for gradients, and Theorems~\ref{unif:bias} and \ref{deriv:unnor},
	we have for all sufficiently small $\e$,
	\begin{align*}
		\nabla_{\M} \TT_{\e} [f] (x)
		&= \frac{1}{\T_{\e} [1] (x)} \nabla_{\M} \T_{\e} [f] (x) - \frac{\TT_{\e} [f] (x)}{\T_{\e} [1] (x)} \nabla_{\M} \T_{\e} [1] (x)
		\\&= \left( \frac{1}{ \rho(x)} + O(\e^2) \right) 
		\left(  \nabla_\M (\rho f) (x)  +  O(\e^2) \right)
		\\& \hspace*{2cm}- \left( \frac{1}{ \rho(x)} + O(\e^2) \right)
		\left( f (x) + O(\e^2) \right)
		\left(  \nabla_\M \rho (x)  +  O(\e^2) \right)
		\\&= \nabla_{\M} f(x) + \frac{f(x)}{\rho(x)} \nabla_{\M} \rho(x)
		- \frac{f(x)}{\rho(x)} \nabla_{\M} \rho(x)+  O(\e^2)
		= \nabla_{\M} f(x) + O(\e^2).
	\end{align*}
	This concludes the proof.
\end{proof}
	
\begin{proof}[Proof of part (b) of Theorem \ref{deriv:nor} for gradients]
	First, define the following:
	\begin{align*}
		\rS^{(1)}_{n,\e} (x) &= \frac{1}{n\e^d} \sum_{i=1}^{n} K\left( \frac{\|X_i-x\|_{\R^D}}{\e}\right) J(x)^\top \frac{X_i-x}{\e} \cdot \frac{f(X_i)-f(x)}{\e}, \\
		\rH^{(1)}_{n,\e} (x) &= \frac{1}{n\e^{d+1}} \sum_{i=1}^{n} K\left( \frac{\|X_i-x\|_{\R^D}}{\e}\right) J(x)^\top \frac{X_i-x}{\e}, \\
		\rS^{(1)}_{\e} (x) &= \E_\P \rS^{(1)}_{n,\e} (x) = \frac{1}{\e^d} \int_{\M} K\left( \frac{\|u-x\|_{\R^D}}{\e}\right) J(x)^\top \frac{u-x}{\e} \cdot \frac{f(u)-f(x)}{\e} d \P(u) , \\
		\rH^{(1)}_{\e} (x) &= \E_\P \rH^{(1)}_{n,\e} (x) = \frac{1}{\e^{d+1}} \int_{\M} K\left( \frac{\|u-x\|_{\R^D}}{\e}\right) J(x)^\top \frac{u-x}{\e}, \\
		\rSS^{(1)}_{n,\e} (x) &= \frac{\rS^{(1)}_{n,\e} (x)}{\T_{n,\e} [1] (x)} ,\quad
		\rHH^{(1)}_{n,\e} (x) = \frac{\rH^{(1)}_{n,\e} (x)}{\T_{n,\e} [1] (x)}, \\
		\rSS^{(1)}_{\e} (x) &= \frac{\rS^{(1)}_{\e} (x)}{\T_{\e} [1] (x)} ,\quad
		\rHH^{(1)}_{\e} (x) = \frac{\rH^{(1)}_{\e} (x)}{\T_{\e} [1] (x)}.
	\end{align*}    
	From
	\begin{align*}
		\nabla_{\R^D} \TT_{n, \e} [f] (x) - \nabla_{\R^D} \TT_{\e} [f] (x)
		=&\left\{ \frac{1}{\T_{n,\e} [1] (x)} \nabla_{\R^D} \T_{n,\e}[f](x) - \frac{\TT_{n,\e}[f](x)}{\T_{n,\e}[1] (x)} \nabla_{\R^D} \T_{n,\e} [1] (x) \right\}
		\\&- \left\{ \frac{1}{\T_{\e} [1] (x)} \nabla_{\R^D} \T_{\e} [f] (x) - \frac{\TT_{\e} [f] (x)}{\T_{\e} [1] (x)} \nabla_{\R^D} \T_{\e} [1] (x) \right\},
	\end{align*} 
	we can write
	\begin{align*}
		\nabla_{\R^d} (\TT_{n, \e} [f] \circ \Exp_x) (0) - \nabla_{\R^d} (\TT_{n, \e} [f] \circ \Exp_x) (0)
		= \cA_1 + \cA_2 + \cA_3,
	\end{align*}
	where
	\begin{align*}
		\cA_1 &= \rSS^{(1)}_{n,\e} (x) - \rSS^{(1)}_{\e} (x), \\
		\cA_2 &= -\left( \TT_{\e} [f] (x) - f(x) \right) \left( \rHH^{(1)}_{n,\e} (x)
		- \rHH^{(1)}_{\e} (x) \right), \\
		\cA_3 &= - \big( \TT_{n,\e} [f] (x) - \TT_{\e} [f] (x)\big) \rHH^{(1)}_{n,\e} (x).
	\end{align*}
	Using the similar arguments as in the proof of Theorem~\ref{deriv:unnor}, we can bound each term properly.
	This completes the proof.
\end{proof}

% ===== (2) Hessians =====
\subsubsection{Proof of Theorem \ref{deriv:nor} for Hessians}

\begin{proof}[Proof of part (a) of Theorem \ref{deriv:nor} for Hessians]
	Let $\T_{\e}[f] \circ \Exp_x = p_x$ and  $\T_{\e} [1] \circ \Exp_x = q_x$.
	Then, by the product rule of Hessians, we have
	\begin{align*}
	& \nabla_{\R^d}^2 (\TT_{\e} [f] \circ \Exp_x) (0)
	\\=& \frac{1}{q_x (0)} \nabla^2_{\R^d} p_x (0)
	-\frac{1}{q_x^2 (0)} \nabla_{\R^d} p_x(0) \nabla_{\R^d} q_x(0)^\top
	-\frac{1}{q_x^2 (0)} \nabla_{\R^d} q_x(0) \nabla_{\R^d} p_x(0)^\top
	\\& +\frac{2 p_x(0)}{q_x^3 (0)} \nabla_{\R^d} q_x(0) \nabla_{\R^d} q_x(0)^\top
	-\frac{p_x(0)}{q_x^2 (0)} \nabla^2_{\R^d} q_x(0).
	\end{align*}
	Now, using Theorems~\ref{unif:sto} and \ref{deriv:unnor}, we can check the convergence of each term.
	This completes the proof.
\end{proof}

\begin{proof}[Proof of part (b) of Theorem \ref{deriv:nor} for Hessians]
	Write
	\begin{align*}
		\nabla_{\R^d}^2 (\TT_{n,\e} [f] \circ \Exp_x) (0) - \nabla_{\R^d}^2 (\TT_{\e} [f] \circ \Exp_x) (0)
		= \cA_1 + \cA_1,
	\end{align*}
	where
	\begin{align*}
		\cA_1 &= J(x)^\top \left(\nabla_{\R^D}^2\TT_{n,\e}[f](x) - \nabla_{\R^D}^2 \TT_{\e} [f] (x) \right) J(x), \\
		\cA_2 &= \sum_{\ell=1}^{D} \left[ \nabla_{\R^D} \TT_{n, \e} [f] (x) - \nabla_{\R^D} \TT_{\e} [f] (x)
		\right]_{\ell} \nabla_{\R^d}^2 \left[ \Exp_x \right]_\ell (0).
	\end{align*}
	\begin{myenumerate}
		\myitem{Bounding $\cA_1$}
		Define
		\begin{align*}
			\rS_{n,\e}^{(2)} (x) =& \frac{1}{n\e^{d+1}} \sum_{i=1}^{n} K\left(\frac{\|X_i-x\|_{\R^D}}{\e}\right) \left\{ J(x)^\top \left(\frac{X_i-x}{\e}\right)  
			\left(\frac{X_i-x}{\e}\right)^\top J(x) - I_d \right\} \cdot \frac{f(X_i)-f(x)}{\e}, \\
			\rH_{n,\e}^{(2)} (x) =& \frac{1}{n\e^{d+2}} \sum_{i=1}^{n} K\left(\frac{\|X_i-x\|_{\R^D}}{\e}\right) \left\{ J(x)^\top \left(\frac{X_i-x}{\e}\right)  
			\left(\frac{X_i-x}{\e}\right)^\top J(x) - I_d \right\}, \\
			\rH_{n,\e}^{(1)} (x) =& \frac{1}{n\e^{d+1}} \sum_{i=1}^{n} K\left(\frac{\|X_i-x\|_{\R^D}}{\e}\right) 
			J(x)^\top \left(\frac{X_i-x}{\e}\right), \\
			\rS^{(2)}_{\e} (x) =& \E_\P \rS^{(2)}_{n,\e} (x),\quad
			\rH^{(2)}_{\e} (x) = \E_\P \rH^{(2)}_{n,\e} (x), \quad
			\rH^{(1)}_{\e} (x) = \E_\P \rH^{(1)}_{n,\e} (x), \\
			\rSS^{(2)}_{n,\e} (x) =& \frac{\rS^{(2)}_{n,\e} (x)}{\T_{n,\e} [1] (x)},\quad
			\rHH^{(2)}_{n,\e} (x) = \frac{\rH^{(2)}_{n,\e} (x)}{\T_{n,\e} [1] (x)}, \quad
			\rHH^{(1)}_{n,\e} (x) = \frac{\rH^{(1)}_{n,\e} (x)}{\T_{n,\e} [1] (x)}, \\
			\rSS^{(2)}_{\e} (x) =& \frac{\rS^{(2)}_{\e} (x)}{\T_{\e} [1] (x)},\quad
			\rHH^{(2)}_{\e} (x) = \frac{\rH^{(2)}_{\e} (x)}{\T_{\e} [1] (x)}, \quad
			\rHH^{(1)}_{\e} (x) = \frac{\rH^{(1)}_{\e} (x)}{\T_{\e} [1] (x)}.
		\end{align*}        
		From
		\begin{align*}
			\nabla_{\R^D}^2 \TT_{\e} [f] (x)
			=& \frac{1}{\T_{\e} [1] (x)} \nabla^2_{\R^D} \T_{\e} [f] (x)
			-\frac{1}{\left\{\T_{\e} [1] (x)\right\}^2} \nabla_{\R^D} \T_{\e} [f] (x) \nabla_{\R^D} \T_{\e} [1] (x)^\top
			\\& -\frac{1}{\left\{\T_{\e} [1] (x)\right\}^2} \nabla_{\R^D} \T_{\e} [1] (x) \nabla_{\R^D} \T_{\e} [f] (x)^\top
			+\frac{2 \T_{\e} [f] (x)}{\left\{\T_{\e} [1] (x)\right\}^3 } \nabla_{\R^D} \T_{\e} [1] (x)\nabla_{\R^D} \T_{\e} [1] (x)^\top
			\\& -\frac{\T_{\e} [f] (x)}{\left\{\T_{\e} [1] (x)\right\}^2 } \nabla^2_{\R^D} \T_{\e} [1] (x),
		\end{align*}
		we can rewrite
		$ \cA_1 = \mathcal{B}_1 + \mathcal{B}_2 + \mathcal{B}_3 + \mathcal{B}_4 + \mathcal{B}_4^\top, $
		where
		\begin{align*}
			\mathcal{B}_1 =& \rSS^{(2)}_{n,\e} (x) - \rSS^{(2)}_{\e} (x), \\
			\mathcal{B}_2 =& - \left\{ \TT_{\e}[f](x) - f(x) \right\} \left\{ \rHH^{(2)}_{n,\e} (x) - \rHH^{(2)}_{\e} (x) \right\}, \\
			\mathcal{B}_3 =& - \left\{ \TT_{n,\e} [f] (x) - \TT_{\e} [f] (x) \right\} \rHH^{(2)}_{n,\e} (x), \\
			\mathcal{B}_4 =& J(x) \nabla_{\R^D} \TT_{n,\e} [f] (x) \rHH^{(1)}_{n,\e} (x) - J(x) \nabla_{\R^D} \TT_{\e} [f] (x) \rHH^{(1)}_{\e} (x).
		\end{align*}
		Then, similar to the proofs of Theorem~\ref{deriv:unnor}, 
		we can find $\e_1, C_1, C_1^*>0$ such that if $ \e^*_{n,\delta}(C_1^*) < \e < \e_1,$    
		with probability at least $1-\delta/2$, we have
		\begin{equation*}
			\sup_{x \in \M} \|\cA_1\|_\F \leq 
			C_1 \left( 
			\sqrt{ \frac{\log(1/(\e\wedge\delta))}{n\e^{d+2}} }  
			+ \frac{\log(1/(\e\wedge\delta)}{n \e^{d+4}} \right)
		\end{equation*}
		
		\myitem{Bounding $\cA_2$}
		Consider
		$
		\left[ \cA_2 \right]_{ij} = b_{ij}^\top \left( \nabla_{\R^D} \TT_{n, \e} [f] (x) - \nabla_{\R^D} \TT_{\e} [f] (x) \right)
		$
		for $i,j = 1, \dots, d$.
		Then, we can write
		\begin{align*}
			b_{ij}^\top \left( \nabla_{\R^D} \TT_{n, \e} [f] (x) - \nabla_{\R^D} \TT_{n, \e} [f] (x) \right)
			= \mathcal{B}_{5,ij} + \mathcal{B}_{6,ij} + \mathcal{B}_{7,ij},
		\end{align*}
		where
		\begin{align*}
			\mathcal{B}_{5,ij} &=  \rSS^{(1,ij)}_{n,\e} (x) - \rSS^{(1,ij)}_{\e} (x), \\
			\mathcal{B}_{6,ij} &= -\left( \TT_{\e} [f] (x) - f(x) \right) \left( \rHH^{(1,ij)}_{n,\e} (x) - \rHH^{(1,ij)}_{\e} (x) \right), \\
			\mathcal{B}_{7,ij} &= -\left( \TT_{n,\e} [f] (x) - \TT_{\e} [f] (x)\right) \rHH^{(1,ij)}_{n,\e} (x), \\
			\rS^{(1,ij)}_{n,\e} (x) &= \frac{1}{n\e^d} \sum_{i=1}^{n} K\left( \frac{\|X_i-x\|_{\R^D}}{\e}\right) b_{ij}^\top \frac{X_i-x}{\e} \cdot \frac{f(X_i)-f(x)}{\e}, \\
			\rH^{(1,ij)}_{n,\e} (x) &= \frac{1}{n\e^{d+1}} \sum_{i=1}^{n} K\left( \frac{\|X_i-x\|_{\R^D}}{\e}\right) b_{ij}^\top \frac{X_i-x}{\e}, \\
			\rS^{(1,ij)}_{\e} (x) &= \E_\P \rS^{(1,ij)}_{n,\e} (x) = \frac{1}{\e^d} \int_{\M} K\left( \frac{\|u-x\|_{\R^D}}{\e}\right) b_{ij}^\top \frac{u-x}{\e} \cdot \frac{f(u)-f(x)}{\e} d \P(u), \\
			\rH^{(1,ij)}_{\e} (x) &= \E_\P \rH^{(1,ij)}_{n,\e} (x) = \frac{1}{\e^{d+1}} \int_{\M} K\left( \frac{\|u-x\|_{\R^D}}{\e}\right) b_{ij}^\top \frac{u-x}{\e} \cdot \frac{f(u)-f(x)}{\e} d \P(u), \\
			\rSS^{(1,ij)}_{n,\e} (x) &= \frac{\rS^{(1,ij)}_{n,\e} (x)}{\T_{n,\e} [1] (x)} ,\quad
			\rHH^{(1,ij)}_{n,\e} (x) = \frac{\rH^{(1,ij)}_{n,\e} (x)}{\T_{n,\e} [1] (x)}, \\
			\rSS^{(1,ij)}_{\e} (x) &= \frac{\rS^{(1,ij)}_{\e} (x)}{\T_{\e} [1] (x)} ,\quad
			\rHH^{(1,ij)}_{\e} (x) = \frac{\rH^{(1,ij)}_{\e} (x)}{\T_{\e} [1] (x)}.
		\end{align*}
		Then, similar to the proof of the gradient case, we can bound each $\mathcal{B}_{5,ij}$, $\mathcal{B}_{6,ij}$, $\mathcal{B}_{7,ij}$ similar to $\cA_1$.
	\end{myenumerate}
	Combining the above results completes the proof.
\end{proof}

% ===== Kernel Regression: Uniform convergence =====
\subsection{Proof of Corollary \ref{reg:unif}}

\begin{proof}[Proof of part (a) of Corollary \ref{reg:unif}]
	Note that
	\begin{align*}
		\rho_{X,\e} (x):= \int_{\R \times \M} K_\e \left( \|u-x\|_{\R^D} \right) d\P_{X,Y} (u,y)
		&= \int_{\M} K_\e \left( \|u-x\|_{\R^D} \right) d \P_X (x),
	\end{align*}
	and
	\begin{align*}
		\int_{\R \times \M} y K_\e (\|u-x\|_{\R^D}) d \P_{X,Y} (u,y)
		&= \int_{\M} K_\e \left( \|u-x\|_{\R^D} \right) \left( \int_\R y\, d \P_{Y|X}(y|u) \right) d \P_X (u)
		\\&= \int_{\M} K_\e \left( \|u-x\|_{\R^D} \right) \varphi(u) d \P_X (u).
	\end{align*}
	These yield
	\begin{align*}
		\varphi_\e (x) 
		&= \frac{\int_{\R \times \M} y K_\e (\|u-x\|_{\R^D}) d \P_{X,Y} (u,y)}{\int_{\R \times \M} K_\e \left( \|u-x\|_{\R^D} \right) d\P_{X,Y} (u,y)}
		= \frac{\int_{\M} K_\e \left( \|u-x\|_{\R^D} \right) \varphi (u) d \P_X (u)}{\int_{\M} K_\e \left( \|u-x\|_{\R^D} \right) d \P_X (u)}
		= \TT_\e [\varphi] (x).
	\end{align*}	
	Therefore, the result follows directly from Theorems~\ref{unif:bias} and \ref{deriv:nor}.
\end{proof}

\begin{proof}[Proof of part (b) of Corollary \ref{reg:unif}]
	From the proof of (a), we have
	\begin{align*}
		\E_{\P_{X,Y}} [Y K_\e( \|X-x\|_{\R^D})] = \rho_{X,\e}(x) \varphi_\e (x),
	\end{align*}
	and this gives
	\begin{align*}
		\frac{1}{n} \sum_{i=1}^n Y_i K_\e (\|X_i-x\|_{\R^D})
		= \frac{1}{n} \sum_{i=1}^n Y_i K_\e (\|X_i-x\|_{\R^D}) - \E_{\P_{X,Y}} [Y K_\e( \|X-x\|_{\R^D})] + \rho_{X,\e} (x) \varphi_\e (x).
	\end{align*}
	Thus, we can write the difference $\varphi_{n,\e} (x) - \varphi_\e(x)$ as
	\begin{align*}
		\varphi_{n,\e} (x) - \varphi_\e(x) =& \frac{1}{\rho_{X,n,\e} (x)} \left( \cA_1 - \varphi_\e(x) \cA_2 \right),
	\end{align*}
	where
	\begin{align*}
		\rho_{X,n,\e} (x) &= \frac{1}{n} \sum_{i=1}^n K_\e (\|X_i-x\|_{\R^D}), 
		\\ \cA_1 &= \sum_{i=1}^n Y_i K_\e (\|X_i-x\|_{\R^D}) - \E_{\P_{X,Y}} [Y K_\e( \|X-x\|_{\R^D})], 
		\\ \cA_2 &= \rho_{X,n,\e} (x) - \rho_{X,\e} (x).
	\end{align*}
	
	We can bound $1/\rho_{X,n,\e} (x)$, $\varphi_\e(x)$ and $\cA_2$ using (a), Theorem~\ref{unif:sto} and its proof.
	Thus, it suffices to bound $\cA_1$.
	
	Now, define a function class
	\begin{equation*}
		\msF_\e = \left\{ g_x : g_x(u,y) = y \cdot \mathds{1} (|y| \leq C_Y) \cdot K \left( \frac{\|u-x\|_{\R^D}}{\e} \right), x \in \M, \right\}.
	\end{equation*}	
	Then, similar to the proof of Theorem~\ref{unif:sto}, we can show        
	\begin{gather*}
		\sup_{g \in \msF_\e} \|g \|_\infty \leq C_1, \\
		\sup_{g \in \msF_\e} \E_{\P_{X,Y}} g^2(X,Y) \leq C_2 \e^d, \\
		\sup_\Q \mathscr{N} \left( \msF_\e, L^2(\Q), \eta \right)
		\leq \left( \frac{A_{\M} \frac{C_1}{\e} }{\eta} \right)^{d},
	\end{gather*}
	for some $C_1, C_2 > 0$ and for all sufficiently small $\e>0$.
	Therefore, we can obtain
	\begin{align*}
		\sup_{x \in \M} \left| \cA_1 \right| \leq
		C_0 \bigg( \sqrt{ \frac{\log(1/(\e\vee\delta))}{n\e^{d}} } + \frac{\log(1/(\e\vee\delta))}{n \e^{d}} \bigg),
	\end{align*}
	which completes the proof for $k=0$.
	The result for $k=1,2$ can be shown by similar arguments and the proof of Theorem~\ref{deriv:unnor}.
\end{proof}

% ===== Kernel regression: Berry-Esseen bound ======
\subsection{Proof of Corollary \ref{reg:Berry}}

The following lemma will be used for the asymptotic variance of kernel regression.
\begin{lemma} \label{reg:Berry:var}
	Under the conditions in Corollary~\ref{reg:Berry}, there exist $\e_0, C_0 >0$ such that $\e < \e_0$ implies
	\begin{gather}
		\sup_{x \in \M} \left| \frac{1}{\e^d} \int_{\M \times \R} y K^2 \left( \frac{\|u-x\|_{\R^D}}{\e} \right) \dP_{X,Y} (u,y)
		- \frac{\rho_X(x) \varphi (x)}{(4\pi)^{d/2}} \right| \leq C_0 \e, \\
		\sup_{x \in \M} \left| \frac{1}{\e^d} \int_{\M \times \R} y^2 K^2 \left( \frac{\|u-x\|_{\R^D}}{\e} \right) \dP_{X,Y} (u,y)
		- \frac{\rho_X(x) \psi(x) }{(4\pi)^{d/2}} \right| \leq C_0 \e,
	\end{gather}
	where $\psi(x)= \E[Y^2|X=x]$, and the constants $\e_0$ and $C_0$ are $\M$-dependent, and they also depend on the $C^2$-norms of $\varphi$, $\psi$ and $\rho_X$.
\end{lemma}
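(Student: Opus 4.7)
The plan is to reduce both integrals to ones already handled by Theorem \ref{unif:bias}, by first conditioning on $X$ to eliminate the $y$ variable and then recognizing $K^2$ as a rescaled Gaussian kernel.

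For the reduction, I would first apply the tower property to write, for $k=1,2$,
\[
    \int_{\M \times \R} y^k K^2\!\Big(\tfrac{\|u-x\|_{\R^D}}{\e}\Big) d\P_{X,Y}(u,y)
    = \int_\M K^2\!\Big(\tfrac{\|u-x\|_{\R^D}}{\e}\Big) g_k(u)\, d\P_X(u),
\]
where $g_1 = \varphi$ and $g_2 = \psi$. Both $g_k$ lie in $C^2(\M)$: the bound $|Y|\le C_Y$ from Assumption \ref{reg:assume:density2} guarantees $\psi$ is well defined and bounded, and $C^2$ smoothness is imposed as a hypothesis of the lemma (and is consistent with the smoothness assumptions feeding into Corollary \ref{reg:Berry}). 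Next, I would use the elementary identity
\[
    \frac{1}{\e^d} K^2\!\Big(\tfrac{v}{\e}\Big)
    = \frac{1}{(2\pi)^d \e^d} e^{-v^2/\e^2}
    = \frac{1}{(4\pi)^{d/2}}\, K_{\e/\sqrt{2}}(v),
\]
which follows from $K(v) = (2\pi)^{-d/2} e^{-v^2/2}$ and a direct check of the normalizing constants; this is the source of the $1/(4\pi)^{d/2}$ factor in the target expression. Combining the two identities gives
\[
    \frac{1}{\e^d} \int_\M K^2\!\Big(\tfrac{\|u-x\|_{\R^D}}{\e}\Big) g_k(u)\, d\P_X(u)
    = \frac{1}{(4\pi)^{d/2}}\, \T_{\e/\sqrt{2}}[g_k](x),
\]
where $\T$ is the unnormalized smoothing operator associated with $\P_X = \rho_X\, d\mathrm{vol}_\M$.

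Finally, I would invoke Theorem \ref{unif:bias} with $g_k$, density $\rho_X$, and bandwidth $\e/\sqrt{2}$ to conclude
\[
    \sup_{x\in\M} \big| \T_{\e/\sqrt{2}}[g_k](x) - \rho_X(x) g_k(x) \big| \le C \e^2,
\]
with $\M$-dependent constants that also depend on the $C^2$-norms of $g_k$ and $\rho_X$. Dividing by $(4\pi)^{d/2}$ and noting that $\e^2 \le \e$ once $\e_0 \le 1$ yields the asserted bound $C_0 \e$ for both assertions.

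There is essentially no hard step; the entire argument is a routine reduction to Theorem \ref{unif:bias}. The only items worth flagging are the kernel identity rewriting $\e^{-d} K^2(\cdot/\e)$ as a rescaled Gaussian $K_{\e/\sqrt{2}}$ (which is what produces the $(4\pi)^{d/2}$ normalization), and verifying that $\psi(x) = \E[Y^2|X=x]$ inherits $C^2$ smoothness, which is part of the hypotheses of Corollary \ref{reg:Berry}. The method in fact delivers the sharper rate $O(\e^2)$; the stated $O(\e)$ bound is conservative but evidently sufficient for the Berry--Esseen applications downstream.
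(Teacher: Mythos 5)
Your proof is correct and follows essentially the same route as the paper: condition on $X$ via the tower property to replace $y^k$ by $g_k(u)$, then exploit the fact that $K^2$ is again a Gaussian kernel to reduce to the bias expansion of Theorem~\ref{unif:bias}. The paper phrases the second step as ``applying Lemma~\ref{unif:sto:var} and using its proof,'' i.e.\ it re-runs the Taylor-expansion argument inline with Lemma~\ref{smoothing:aux:quadratic}, whereas you package the same observation more cleanly as the exact identity $\e^{-d}K^2(v/\e) = (4\pi)^{-d/2}K_{\e/\sqrt 2}(v)$ and then cite Theorem~\ref{unif:bias} as a black box with bandwidth $\e/\sqrt 2$. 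The verification of the normalizing constant checks out, the smoothness of $\varphi,\psi$ is indeed supplied by the hypotheses, and your remark that this actually yields the sharper rate $O(\e^2)$ (the paper's $O(\e)$ is stated for uniformity with Lemma~\ref{unif:sto:var}, where odd-order terms genuinely survive and limit the rate to $\e$) is correct and worth having noted.
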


\begin{proof}[Proof of Lemma \ref{reg:Berry:var}]
	We see that
	\begin{align*}
		\frac{1}{\e^d} \int_{\M \times \R} y K^2 \left( \frac{\|u-x\|_{\R^D}}{\e} \right) \dP_{X,Y} (u,y)
		&= \frac{1}{\e^d} \int_\M K^2 \left( \frac{\|u-x\|_{\R^D}}{\e} \right) \left( \int_\R y d \P_{Y|X} (y|u) \right) \dP_{X} (u)
		\\&= \frac{1}{\e^d} \int_\M K^2 \left( \frac{\|u-x\|_{\R^D}}{\e} \right) \varphi(u) \dP_{X} (u),
	\end{align*}
	and similarly,	
	\begin{align*}
		\frac{1}{\e^d} \int_{\M \times \R} y^2 K^2 \left( \frac{\|u-x\|_{\R^D}}{\e} \right) \dP_{X,Y} (u,y)
		&= \frac{1}{\e^d} \int_\M K^2 \left( \frac{\|u-x\|_{\R^D}}{\e} \right) \psi(u) \dP_{X} (u).
	\end{align*}
	Thus, applying Lemma~\ref{unif:sto:var} and using its proof, we can complete the proof.
\end{proof}

\begin{proof}[Proof of Corollary \ref{reg:Berry}]
For simplicity, assume $m=2$. Using the notation in the proof of Corollary~\ref{reg:unif}, we have
\begin{equation*}
	\bfZ_{n,\e,f} = \sqrt{n \e^d} 
	\begin{pmatrix}
	\varphi_{n,\e} (x_1) - \varphi_{\e} (x_1) \\
	\varphi_{n,\e} (x_2) - \varphi_{\e} (x_2)
	\end{pmatrix}
	= S_n + R_n S_n
\end{equation*}
where for $i=1,\ldots,n$ and $j=1,2$,
\begin{align*}
	Z_i &= (Z_{i1}, Z_{i2})^\top, \quad S_n = \frac{1}{\sqrt{n}} \sum_{i=1}^n Z_i, \quad \Sigma_{f,\e} = \var_\P (Z_1), \\
	Z_{ij} &= \frac{1}{\rho_{X,\e}(x_j)} \left( V_{ij} - \varphi_\e (x_j) U_{ij} \right), \\
	U_{ij} &= \sqrt{\e^d} \left( \frac{1}{\e^d} K \left( \frac{\|X_i-x_j\|_{\R^D}}{\e} \right)
	- \E_{\P_{X,Y}} \left[ \frac{1}{\e^d} K \left( \frac{\|X - x_j\|_{\R^D}}{\e} \right) \right] \right), \\
	V_{ij} &= \sqrt{\e^d} \left( \frac{1}{\e^d} Y_i \, K \left( \frac{\|X_i - x_j\|_{\R^D}}{\e} \right)
	- \E_{\P_{X,Y}} \left[ \frac{1}{\e^d} Y \, K \left( \frac{\|X - x_j\|_{\R^D}}{\e} \right) \right] \right), \\
	R_n &= \diag \left( \frac{1}{\rho_{X,n,\e}(x_1)} - \frac{1}{\rho_{X,\e}(x_1)},\;\; \frac{1}{\rho_{X,n,\e}(x_2)} - \frac{1}{\rho_{X,\e}(x_2)}  \right).
\end{align*}
Using Theorem~\ref{unif:bias} and Lemma~\ref{reg:Berry:var}, it is straightforward that $\E_\P U_{ij} = 0,$ $\E_\P V_{ij} = 0, $
\begin{gather*}
	\frac{1}{\rho_{X,\e} (x_j)} = \frac{1}{\rho_X(x_j)} + O(\e^2), \quad
	\varphi_\e (x_j) = \varphi (x_j) + O(\e^2), \\
	\E_\P U_{ij}^2 = \frac{\rho_X(x_j)}{(4\pi)^{d/2}} + O(\e), \quad
	\E_\P V_{ij}^2 = \frac{\rho_X(x_j) \psi(x_j)}{(4\pi)^{d/2}} + O(\e), \\
	\E_\P U_{i1} U_{i2} = O(\e^d), \quad
	\E_\P V_{i1} V_{i2} = O(\e^d), \\
	\E_\P U_{i1} V_{i1} = \frac{\rho_X(x_j)\varphi(x_j)}{(4\pi)^{d/2}} + O \left( \e \right), \quad
	\E_\P U_{i1} V_{i2} = O(\e^d), \\
	\E_\P \left| U_{ij} \right|^3 \leq \frac{8 \Theta_d \rho(x_j) + O(\e^{2\wedge d})}{\sqrt{\e^d}}, \quad
	\E_\P \left| V_{ij} \right|^3 \leq \frac{8 \Theta_d' \rho(x_j) \left\|\nabla_\M f(x_j)\right\|^3 + O(\e)}{\sqrt{\e^d}},
\end{gather*}

We see that using $\var(Y|X=x_j) = \psi(x_j) - \varphi^2(x_j)$,
\begin{align*}
	\var_{\P_{X,Y}} (Z_{ij})
	&= \frac{1}{\rho^2_X (x_j)} \left( \frac{\rho_X(x_j) \psi(x_j)}{(4\pi)^{d/2}} -2 \varphi (x_j) \cdot \frac{\rho_X(x_j) \varphi(x_j)}{(4\pi)^{d/2}} + \frac{\rho_X (x_j) \varphi^2(x_j)}{(4\pi)^{d/2}} \right) 
	\\&= \frac{\var(Y|X=x_j)}{(4\pi)^{d/2} \rho_X (x_j)}.
\end{align*}

Therefore, by following the same arguments in the proof of Theorems~\ref{Berry:unnor} and \ref{Berry:nor},
we can completes the proof.
\end{proof}

% ===== Heat Kernel Signature =====
\subsection{Proof of Corollaries~\ref{HKS:unif} and \ref{HKS:Berry}}

Observe that
\begin{align*}
	\widehat{H}_{n,N,\eta,\tau,\e} (x) - H_\tau (x) 
	&= \TT_{n,\e} \left[ \tilde{H}_{n,N,\eta,\tau} \right] (x) - H_\tau (x)
	\\&= \TT_{n,\e} \big[ \tilde{H}_{n,N,\eta,\tau}  - H_\tau \big] (x)
	+ \left( \TT_{n,\e} [H_\tau ] (x) - H_\tau (x)  \right).
\end{align*}
Since the second term is simply the normalized kernel smoothing of $H_\tau$, it suffices to control the first term.
Therefore, to complete the proof, it is enough to apply the the following lemma:
\begin{lemma} \label{kernel:lemma:hks_diff}
	Suppose Assumption \ref{smoothing:assume:manifold}, \ref{smoothing:assume:density}, \ref{smoothing:assume:kernel} hold.
	Then, for any function $f: \M \rightarrow \R$ and $k=0,1,2$, we have
	\begin{gather*}
		\sup_{x \in \M} \left\| \nabla_\M^k \TT_{n, \e}[f] (x) \right\| \leq \frac{C_0 \|f\|_{\cX_n}}{\e^{2k}},
	\end{gather*}
	where $\|f\|_{\cX_n} = \max\limits_{i=1,\dots,n} |f(X_i)| $ and $C_0>0$ is $\M$-dependent. 
	In particular, for $k=0$, we take $C_0=1$.
\end{lemma}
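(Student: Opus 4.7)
The case $k=0$ is immediate: since the normalized weights $w_i(x) := K_\e(\|X_i - x\|_{\R^D})/\sum_j K_\e(\|X_j - x\|_{\R^D})$ are nonnegative and sum to one, $\TT_{n,\e}[f](x) = \sum_i w_i(x) f(X_i)$ is a convex combination of $\{f(X_i)\}$, hence $|\TT_{n,\e}[f](x)| \le \max_i |f(X_i)| = \|f\|_{\cX_n}$, which gives $C_0 = 1$.

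For $k = 1$, I would apply the quotient rule to write
\begin{equation*}
\nabla_\M \TT_{n,\e}[f](x) = \sum_{i=1}^n \frac{\nabla_\M K_\e(\|X_i - \cdot\|_{\R^D})\big|_x}{\sum_j K_\e(\|X_j - x\|_{\R^D})} \big( f(X_i) - \TT_{n,\e}[f](x) \big).
\end{equation*}
Direct computation yields $\nabla_{\R^D} K_\e(\|X_i - x\|_{\R^D}) = \e^{-2}(X_i - x) K_\e(\|X_i - x\|_{\R^D})$, and taking the tangential projection gives a manifold gradient of norm at most $\e^{-2} \|X_i - x\|_{\R^D} K_\e(\|X_i - x\|_{\R^D})$. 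Bounding $|f(X_i) - \TT_{n,\e}[f](x)| \le 2\|f\|_{\cX_n}$ (by the $k=0$ estimate) and $\|X_i - x\|_{\R^D} \le \diam(\M)$ (since the ambient distance is dominated by the intrinsic one), we conclude
\begin{equation*}
\|\nabla_\M \TT_{n,\e}[f](x)\| \le \frac{2\diam(\M)}{\e^2} \|f\|_{\cX_n} \sum_i w_i(x) = \frac{2\diam(\M)}{\e^2} \|f\|_{\cX_n},
\end{equation*}
so $C_0 = 2\diam(\M)$ is $\M$-dependent as required.

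For $k = 2$, I would compute the intrinsic Hessian in normal coordinates via the chain rule used in the proof of Theorem~\ref{deriv:unnor}, namely
\begin{equation*}
\nabla_\M^2 \TT_{n,\e}[f](x) = J(x)^\top \nabla_{\R^D}^2 \TT_{n,\e}[f](x) J(x) + \sum_{\ell=1}^D \big[ \nabla_{\R^D} \TT_{n,\e}[f](x) \big]_\ell \nabla_{\R^d}^2 [\Exp_x]_\ell (0).
\end{equation*}
The second summand is controlled by combining the $k=1$ estimate with the uniform $\M$-dependent bound on the second fundamental form. For the first summand, apply the quotient rule twice, so that each term pairs one of
\begin{equation*}
\nabla_{\R^D}^2 K_\e(\|X_i - x\|_{\R^D}) = \frac{1}{\e^2}\Big[\frac{(X_i-x)(X_i-x)^\top}{\e^2} - I_D\Big] K_\e(\|X_i - x\|_{\R^D}), \qquad \nabla_{\R^D} K_\e(\|X_i - x\|_{\R^D}),
\end{equation*}
with suitable factors of $(f(X_i) - \TT_{n,\e}[f](x))$. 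Taking norms and using $\|X_i - x\|_{\R^D} \le \diam(\M)$, each summand is crudely dominated by an $\M$-dependent multiple of $\e^{-4} \|f\|_{\cX_n}$ after the weights $w_i(x)$ are used up to form a partition of unity, yielding the asserted bound.

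The main obstacle is the bookkeeping for $k=2$: collecting the several terms produced by applying the quotient rule twice together with the exponential-map correction, and checking that each ratio $\|X_i - x\|_{\R^D}^m K_\e(\|X_i - x\|_{\R^D}) / \e^{2k} K_\e(\|X_i - x\|_{\R^D})$ that arises (with $m \le 4$) admits the uniform bound $\diam(\M)^m/\e^{2k}$. Crucially, unlike in Theorem~\ref{deriv:unnor}(a), we do not seek cancellations here; term-by-term upper bounds suffice because the $\M$-dependent $C_0$ is allowed to absorb all polynomial factors in $\diam(\M)$ and in the second fundamental form.
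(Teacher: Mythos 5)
Your proposal is correct and matches the paper's approach: the $k=0$ case is the convex-combination bound, and for $k=1,2$ the quotient rule is pushed through the Gaussian kernel's derivatives, together with $\|X_i-x\|_{\R^D}\le\diam(\M)$, the weight normalization, and (for $k=2$) the chain-rule decomposition with the second-fundamental-form correction term. The only difference is stylistic: you collect the quotient-rule terms into the centered form $\sum_i \frac{\nabla K_\e}{\sum_j K_\e}\big(f(X_i)-\TT_{n,\e}[f](x)\big)$, whereas the paper bounds the two uncentered ratios $\frac{\nabla_{\R^D}\T_{n,\e}[f]}{\T_{n,\e}[1]}$ and $\frac{\TT_{n,\e}[f]\,\nabla_{\R^D}\T_{n,\e}[1]}{\T_{n,\e}[1]}$ separately; the two bookkeeping choices are equivalent.
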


\begin{proof}[Proof of Lemma \ref{kernel:lemma:hks_diff}]
	First, by the definition of normalized kernel smoothing, it is clear that
	\begin{align*}
		\left| \TT_{n, \e}[f] (x) \right|
		&\leq \frac{\sum_{i=1}^{n} K_\e ( \|X_i-x\|_{\R^D} ) \left| f(X_i) \right|}{\sum_{i=1}^{n} K_\e ( \|X_i-x\|_{\R^D} )}
		\leq \|f\|_{\cX_n}.
	\end{align*}
	
	Next, for the difference of the gradients, using the arguments in the proof of Theorem \ref{deriv:nor}, we have
	\begin{align*}
		\left\| \nabla_\M \TT_{n, \e}[f] (x) \right\|
		&\leq \frac{C_1}{\T_{n,\e} [1] (x)} \left\|  \nabla_{\R^D} \T_{n,\e} [f] (x) \right\|_{\R^D}
		\\&\quad +
		\frac{C_1 \left|\TT_{n,\e} [f] (x)\right|}{\T_{n,\e} [1] (x)} \left\| \nabla_{\R^D} \T_{n,\e} [1] (x) \right\|_{\R^D}
		\\&\leq \frac{C_2 \|f\|_{\cX_n}}{\e^2}
	\end{align*}
	for some $C_1, C_2 >0$. Similarly, the same argument applies for $k=2$, which completes the proof.
\end{proof}

\end{document}